\newtheorem{Theorem}{Theorem}[section]
\newtheorem{Definition}[Theorem]{Definition}
\newtheorem{Proposition}[Theorem]{Proposition}
\newtheorem{Lemma}[Theorem]{Lemma}
\newtheorem{Corollary}[Theorem]{Corollary}
\newtheorem{Remark}[Theorem]{Remark}
\newtheorem{Hypothesis}{Hypothesis}
\numberwithin{equation}{section}
\begin{document}
\renewcommand{\figurename}{Fig.1}

\def\r2{\mathbb{R}^2}
\def\le{\left}
\def\r{\right}
\def\cost{\mbox{const}}
\def\a{\alpha}
\def\d{\delta}
\def\ph{\varphi}
\def\e{\epsilon}
\def\la{\lambda}
\def\si{\sigma}
\def\La{\Lambda}
\def\B{{\cal B}}
\def\A{{\mathcal A}}
\def\L{{\mathcal L}}
\def\O{{\mathcal O}}
\def\bO{\overline{{\mathcal O}}}
\def\F{{\mathcal F}}
\def\K{{\mathcal K}}
\def\H{{\mathcal H}}
\def\D{{\mathcal D}}
\def\C{{\mathcal C}}
\def\M{{\mathcal M}}
\def\N{{\mathcal N}}
\def\G{{\mathcal G}}
\def\T{{\mathcal T}}
\def\R{{\mathbb R}}
\def\I{{\mathcal I}}

\def\bw{\overline{W}}
\def\phin{\|\varphi\|_{0}}
\def\s0t{\sup_{t \in [0,T]}}
\def\lt{\lim_{t\rightarrow 0}}
\def\iot{\int_{0}^{t}}
\def\ioi{\int_0^{+\infty}}
\def\ds{\displaystyle}
\def\pag{\vfill\eject}
\def\fine{\par\vfill\supereject\end}
\def\acapo{\hfill\break}

\def\beq{\begin{equation}}
\def\eeq{\end{equation}}
\def\barr{\begin{array}}
\def\earr{\end{array}}
\def\vs{\vspace{.1mm}   \\}
\def\rd{\reals\,^{d}}
\def\rn{\reals\,^{n}}
\def\rr{\reals\,^{r}}
\def\bD{\overline{{\mathcal D}}}
\newcommand{\dimo}{\hfill \break {\bf Proof - }}
\newcommand{\nat}{\mathbb N}
\newcommand{\E}{\mathbb E}
\newcommand{\Pro}{\mathbb P}
\newcommand{\com}{{\scriptstyle \circ}}
\newcommand{\reals}{\mathbb R}

\def\Amu{{A_\mu}}
\def\Qmu{{Q_\mu}}
\def\Smu{{S_\mu}}
\def\H{{\mathcal{H}}}
\def\Im{{\textnormal{Im }}}
\def\Tr{{\textnormal{Tr}}}
\def\E{{\mathbb{E}}}
\def\P{{\mathbb{P}}}
\def\span{{\textnormal{span}}}
%opening
\title{Fast flow asymptotics for stochastic incompressible viscous fluids in $\mathbb{R}^2$ and SPDEs on  graphs\thanks{{\em Key words}: Averaging principle, , Markov processes on graphs, stochastic partial differential equations}
}
\author{Sandra Cerrai\thanks{Partially supported by the NSF grant DMS 1407615 {\em Asymptotic Problems for SPDEs}.}, Mark Freidlin\thanks{Partially supported by the NSF grant DMS 1411866 {\em Long-term Effects of Small Perturbations and Other Multiscale Asymptotic Problems}.}\\
\vspace{.1cm}\\
Department of Mathematics\\
 University of Maryland\\
College Park, Maryland, USA
}

\date{}

\maketitle

\begin{abstract}
Fast advection asymptotics for a stochastic reaction-diffusion-advection equation are studied in this paper. To describe the asymptotics, one should consider a suitable class of SPDEs defined on a graph, corresponding to the stream function of the underlying incompressible flow.

\end{abstract}

\section{Introduction}
\label{sec1}

Consider an incompressible flow in $\mathbb{R}^2$, with stream function $-H(x)$, $x \in\,\mathbb{R}^2$, and let some particles move together with the flow. If we denote by $u(t,x)$ the density of the particles at time $t\geq 0$ and position $x \in\,\mathbb{R}^2$, then the function $u(t,x)$ satisfies the Liouville equation
\begin{equation}
\label{adv}
\le\{\begin{array}{l}
\ds{\partial_t u(t,x)=\le<\bar{\nabla}H(x),\nabla u(t,x)\r>,\ \ \ \ \ t>0,\ \ \ x \in\,\mathbb{R}^2,}\\
\vs
\ds{u(0,x)=\varphi(x),\ \ \ \ \ x \in\,\mathbb{R}^2.}
\end{array}\r.\end{equation}
Suppose now that the flow has a small viscosity and the particles take part in a slow chemical reaction, with a deterministic and a stochastic component, as described by the equation
\begin{equation}
\label{eq1-intro-pre}
\le\{\begin{array}{l}
\ds{\partial_t \tilde{u}_\e(t,x)=\frac \e2\,\Delta \tilde{u}_\e(t,x)+\le<\bar{\nabla}H(x),\nabla \tilde{u}_\e(t,x)\r>+\e\,b(\tilde{u}_\e(t,x))+\sqrt{\e}\,g(\tilde{u}_\e(t,x))\partial_t \tilde{\mathcal{W}}(t,x),}\\
\vs
\ds{\tilde{u}_\e(0,x)=\varphi(x),\ \ \ x \in\,\mathbb{R}^2.}
\end{array}\r.
\end{equation}
Here, $0<\e<<1$ is a small parameter, $b, g:\reals^2\to \reals$ are Lipschitz continuous non-linearities and the stream function $-H:\reals^2\to\reals$ is a generic function, having four continuous derivatives, with bounded second derivative, and such that $H(x)\to +\infty$, as $|x|\uparrow +\infty$. The noise $\tilde{\mathcal{W}}(t,x)$ is supposed to be a spatially homogeneous Wiener process having finite spectral measure (see Sections \ref{sec2} and \ref{sec5} for all assumptions and details).

The small positive parameter $\e$ is included in equation \eqref{eq1-intro-pre} in such a  way that all perturbation terms have strength of the same order, as $\e\downarrow 0$.  It is not difficult to check that under the above conditions, if we take the limit as $\e\downarrow 0$,  the solution  $\tilde{u}_\e(t,x)$ of equation \eqref{eq1-intro-pre} converges on any finite time interval to the solution $u(t,x)$ of equation \eqref{adv},  in probability, uniformly with respect to $x$ in a bounded domain of $\mathbb{R}^2$. But on large time intervals, growing together with $\e^{-1}$, the difference $\tilde{u}_\e(t,x)-u(t,x)$ can have order $1$, as $\e\downarrow 0$.

To describe the long-time behavior of the particle density, we define
\[u_\e(t,x)=:\tilde{u}_\e(t/\e,x),\ \ \ \ \ t\geq 0,\ \ \ x \in\,\mathbb{R}^2.\]
With this change of time, the new function $u_\e(t,x)$ solves the equation
\begin{equation}
\label{eq1-intro}
\le\{\begin{array}{l}
\ds{\partial_t u_\e(t,x)=\frac 12\,\Delta u_\e(t,x)+\frac 1\e\le<\bar{\nabla}H(x),\nabla u_\e(t,x)\r>+b(u_\e(t,x))+g(u_\e(t,x))\partial_t \mathcal{W}(t,x),}\\
\vs
\ds{u_\e(0,x)=\varphi(x),\ \ \ x \in\,\mathbb{R}^2,}
\end{array}\r.
\end{equation}
for some spatially homogeneous Wiener process $\mathcal{W}(t,x)$.

In the present paper, we are interested in the limiting behavior  of the solution $u_\e(t,x)$ of equation \eqref{eq1-intro}, as $\e\downarrow 0$, in a finite time interval. In particular, we will see that in order to describe the limit of $u_\e(t,x)$, one should consider SPDEs on a non standard setting, where the space variable changes on the  graph $\Gamma$ obtained by identifying all points in each connected component of the level sets of the Hamiltonian $H$.

A suitable class of SPDEs on a graph has been already studied in our previous paper \cite{cf16}, where SPDEs defined on a net of narrow channels were studied. In that case, we have tried to understand what happens of the solution of the SPDE defined on a $2$-dimensional  channel $G$ with many wings and subject to instantaneous reflections at the boundary, when the width of the channel goes to zero. Actually, we have proved that the solution converges to the solution of a suitable SPDE, defined on a suitable graph that can be associated with the channel, in $L^p(\Omega; C([\tau,T];L^2(G)))$, for any $0<\tau<T$.

Here we are considering the case of a reaction-diffusion-advection equation in $\mathbb{R}^2$, where the reaction term has a deterministic and a stochastic component, and the advection term is  of order $\e^{-1}$, compared to the diffusion and the reaction part. 
For every fixed $\e>0$, the second order differential operator $\mathcal{L}_\e$ defined by
\[\mathcal{L}_\e \varphi(x)=\frac 12 \Delta \varphi(x)+\frac 1\e\le<\bar{\nabla}H(x),\nabla \varphi(x)\r>,\ \ \ \ x \in\,\mathbb{R}^2,\]
is associated with the  stochastic equation
\begin{equation}
\label{stoch-e}
d{X}_\e(t)=\frac 1\e\,\bar{\nabla}H({X}_\e(t))\,dt+d{w}(t),\ \ \ \ {X}_\e(0)=x \in\,\mathbb{R}^2,\end{equation}
for some $2$-dimensional  Brownian motion $w(t)$, defined on a stochastic basis $(\Omega,\mathcal{F}, \{\mathcal{F}_t\}_{t\geq 0}, \mathbb{P})$.
This means that  $u_\e$ is a mild solution to equation \eqref{eq1-intro}  if it satisfies
\[u_\e(t)=S_\e(t)\varphi+\int_0^t S_\e(t-s) B(u_\e(s))\,ds+\int_0^t S_\e(t-s)\,G(u_\e(s))\,d\mathcal{W}(s),\]
where $B$ and $G$ are the composition/multiplication functionals associated with $b$ and $g$, respectively, (see Section \ref{sec5} for the definition), and $S_\e(t)$ is the Markov transition semigroup associated with equation \eqref{stoch-e}
\[S_\e(t)\varphi(x)=\mathbb{E}_x \varphi(X_\e(t)),\ \ \ \ t\geq 0,\ \ \ x \in\,\mathbb{R}^2.\]

In \cite[Chapter 8]{fw12} it is proved that, if  $\Pi$  is the projection of $\mathbb{R}^2$ onto the graph $\Gamma$, then
for any $x \in\,\mathbb{R}^2$ and $T>0$ the process $\Pi(X_\e(\cdot))$ converges, in the sense of weak convergence of distributions in $C([0,T];\Gamma)$, to a Markov process $\bar{Y}$ on $\Gamma$. Namely, for every continuous functional $F$ defined on $C([0,T];\Gamma)$ and any $x \in\,\mathbb{R}^2$ it holds
\begin{equation}
\label{limwek}
\lim_{\e\to 0}
\mathbb{E}_x\,F(\Pi(X_\e(\cdot)))=\bar{\mathbb{E}}_{\Pi(x)}\,F(\bar{Y}(\cdot)).\end{equation}
The generator $\bar{L}$ of the process $\bar{Y}$ is explicitly given, in terms of suitable second order differential operators defined on each edge of the graph and suitable gluing conditions at the vertices.

As a consequence of the limiting result  \eqref{limwek}, in \cite[Chapter 8]{fw12} Freidlin and Wentcell have also studied the asymptotic behavior of the solution of the elliptic problem
\[\le\{\begin{array}{l}
\ds{\frac 12\Delta f_\e(x)+\frac 1\e \le<\bar{\nabla}H(x),\nabla f_\e(x)\r>=-g,\ \ \ \ x \in\,D,}\\
\vs
\ds{f_\e(x)=\rho(x),\ \ \ \ x \in\,\partial D,}
\end{array}\r.\]
where $D$ is a bounded smooth domain in $\mathbb{R}^2$ and $g$ and $\rho$ are continuous functions on $D$ and $\partial D$, respectively. Actually, they have proven that  $f_\e$ converges to  the solution of the corresponding elliptic equation on the graph, associated with the operator $\bar{L}$. In \cite{IS},  Ishii and Souganidis, by using only deterministic arguments, have proved an analogous result in  the more general situation the Laplace operator is replaced with the operator
$\mbox{Tr}[A(x)D^2]$, where $A$ is a smooth, symmetric, non-negative matrix-valued mapping defined on $D$  

Next,  in \cite{f02} the limiting behavior of the solution of the deterministic parabolic problem 
\begin{equation}
\label{det-intro}
\le\{\begin{array}{l}
\ds{\partial_t v_\e(t,x)=\frac 12\,\Delta v_\e(t,x)+\frac 1\e\le<\bar{\nabla}H(x),\nabla v_\e(t,x)\r>+b(v_\e(t,x)),}\\
\vs
\ds{v_\e(0,x)=\varphi(x),\ \ \ x \in\,\mathbb{R}^2,}
\end{array}\r.
\end{equation}
has been studied.
Under the crucial assumption that  the projection  of the support of the function $\varphi$ on the graph $\Gamma$ does not contain any vertex, it is shown that  for any $0<\tau<T$
\begin{equation}
\label{fund-intro}
\lim_{\e\to 0} \sup_{t \in\,[\tau,T]}|u_\e(t,x)-\bar{v}(t,\Pi(x))|=0,\end{equation}
uniformly with respect to  $x$ from any compact set of $\mathbb{R}^2$, where $\bar{v}$ is the solution of the parabolic problem on $\Gamma$
\begin{equation}
\label{graph-intro}
\le\{\begin{array}{l}
\ds{\partial_t \bar{v}(t,z,k)=\bar{L} \bar{v}(t,z,k)+b(\bar{v}(t,z,k)),}\\
\vs
\ds{\bar{v}(0,z,k)=\varphi^\wedge(z,k): =\frac 1{T_k(z)}\oint_{C_k(z)}\frac{\varphi(x)}{|\nabla H(x)|}\,dl_{z,k}. }
\end{array}\r.
\end{equation}
Here $dl_{z,k}$ is the surface measure on the connected component $C_k(z)$ of the level set $C(z)=\{ x \in\,\mathbb{R}^2\,:\,H(x)=z \}$, corresponding to the edge $I_k$, and
\[T_k(z)=\oint_{C_k(z)}\frac{1}{|\nabla H(x)|}\,dl_{z,k},\]
(see Section \ref{sec2} for all details).

Assuming that the projection  of the support of the initial condition  $\varphi$ on the graph $\Gamma$ does not contain any vertex, allows to avoid to deal with the vertices points, where serious discontinuity problems arise. Actually, in order to prove \eqref{fund-intro}, it is necessary to prove that for any $\varphi \in\,C_b(\mathbb{R}^2)$
\begin{equation}
\label{intro1}
\lim_{\e\to 0} \sup_{t \in\,[\tau,T]}\le|\mathbb{E}_x \le[\varphi(X_\e(t))-\varphi^\wedge(\Pi(X_\e(t)))\r]\r|=0,
\end{equation}
and
\begin{equation}
\label{intro2}
\lim_{\e\to 0}\,\sup_{t \in\,[\tau,T]}\le|\mathbb{E}_x\,\varphi^\wedge(\Pi(X_\e(t)))-\bar{\mathbb{E}}_{\Pi(x)}\,\varphi^\wedge (\bar{Y}(t))\r|=0.
\end{equation}
Limit \eqref{intro1} has been obtained in \cite{f02}, as a consequence of the averaging principle, by using angle-action coordinates, away from the vertices. Limit \eqref{intro2} was obtained in \cite{f02} as an immediate consequence of  \eqref{limwek},  since the function $\varphi^\wedge$ is continuous away from the vertices.

But the assumption that the support of the initial condition  on the graph $\Gamma$ does not contain any vertex  it too restrictive and it is critical in several situations to be able to prove \eqref{fund-intro} for a general $\varphi \in\,C_b(\mathbb{R}^2)$. In particular, this is necessary when dealing with SPDEs, as in this case we cannot assume that the support of the noise satisfies such a condition.

For this reason, in Section \ref{sec4} we prove that for any general function $\varphi \in\,C_b(\mathbb{R}^2)$
\begin{equation}
\label{fund-intro}
\lim_{\e\to 0} \sup_{t \in\,[\tau,T]}\le|S_\e(t)u(x)-(\bar{S}(t) u^\wedge)\circ \Pi (x)\r|=0,\end{equation}
with $x \in\,\mathbb{R}^2$ and $0<\tau<T$ fixed. Also in this case, \eqref{fund-intro} follows once we prove limits \eqref{intro1} and \eqref{intro2}, but their proofs are considerably  more delicate than in \cite{f02}, due to the presence of vertices. Actually, in order to prove \eqref{intro1} and \eqref{intro2} we have to consider separately  the region of $\mathbb{R}^2$ close to the critical points of the Hamiltonian and the region far from them and introduce suitable sequences of stopping times that allow to go from one region to the other.  By using the fact that the process $X_\e$ spends a small amount of time close to the critical points, we  obtain suitable nice properties of those stopping times that allow us to conclude the validity of \eqref{fund-intro} for general functions $\varphi$.

Next,  we go back to the SPDE \eqref{eq1-intro}, where, as  we mentioned above,  $\mathcal{W}(t,x)$ is a space homogeneous Wiener process, having finite spectral measure $\mu$. This means, that we can represent $\mathcal{W}(t,x)$ as 
\[\mathcal{W}(t,x)=\sum_{j=1}^\infty\, \widehat{u_j \mu}(x)\,\beta_j(t),\ \ \ \ t\geq 0,\]
for some  complete orthonormal system $\{u_j\}_{j \in\,\nat}$ in $L^2(\mathbb{R}^2;d\mu)$ and a sequence of independent Brownian motions $\{\beta_j\}_{j \in\,\nat}$, all defined on the same stochastic basis.

We study equation \eqref{eq1-intro} in a space of square integrable functions, with respect to a weighted measure $\gamma\circ \Pi(x)\,dx$ on $\mathbb{R}^2$. In fact, the choice of the weight $\gamma$  is not trivial, as we have   to choose it in such a way that $\gamma\circ \Pi$ is admissible with respect to all semigroups $S_\e(t)$, its projection  $\gamma$ on the graph $\Gamma$  is admissible with respect to the semigroup $\bar{S}(t)$ and functions on $L^2(\mathbb{R}^2,\gamma\circ \Pi\,dx)$ are projected to functions in $L^2(\gamma,d\nu_\gamma)$, where $\nu_\gamma$ is the projection  of the Lebesgue measure on $\Gamma$, with weight $\gamma$. Moreover, we need to show that from \eqref{fund-intro} we obtain the limit
\begin{equation}
\label{ultimo}
\lim_{\e\to 0} \sup_{t \in\,[\tau,T]}\le|S_\e(t)u-(\bar{S}(t) u^\wedge)\circ \Pi \r|_{L^2(\mathbb{R}^2,\gamma\circ \Pi(x)dx)}=0.
\end{equation}

Once identified the right class of weights, we introduce the process 
\[\bar{\mathcal{W}}(t,z,k)=\sum_{j=1}^\infty\, (\widehat{u_j \mu})^\wedge (z,k)\,\beta_j(t),\ \ \ \ t\geq 0\ \ \ (z,k) \in\,\Gamma,\]
and we show that $\bar{\mathcal{W}}(t) \in\,L^2(\boldsymbol{\Omega};L^2(\Gamma,d\nu_\gamma))$, for every $t\geq 0$. In particular, as we can prove that
\[\|\bar{S}(t)\|_{\mathcal{L}(L^2(\Gamma,d\nu_\gamma))}\leq c_T,\ \ \ \ t \in\,[0,T],\]
and $g$ is Lipschitz continuous, we obtain that for any $v \in\,L^p(\boldsymbol{\Omega};C([0,T];L^2(\Gamma,d\nu_\gamma)))$ the stochastic convolution
\[ t \in\,[0,+\infty)\mapsto \int_0^t \bar{S}(t-s)\,G(v(s))\,d\bar{\mathcal{W}}(s)=
\sum_{k=1}^\infty\int_0^t \bar{S}(t-s) \le[G(v(s))(\widehat{u_j \mu})^\wedge\r]\,d\beta_j(s),\] is well defined in $L^2(\boldsymbol{\Omega};C([0,T];L^2(\Gamma,d\nu_\gamma)))$ and the mapping 
\[u \mapsto \int_0^{\cdot}\bar{S}(\cdot-s)\,G(v(s))\,d\bar{\mathcal{W}}(s)\]
is Lipschitz continuous in $L^2(\boldsymbol{\Omega};C([0,T];L^2(\Gamma,d\nu_\gamma)))$. 

In particular, as also $b$ is Lipschitz continuous, this implies that the SPDE 
\[ \le\{\begin{array}{l}
\ds{\partial_t \bar{u}(t,z,k)=\bar{{L}}\,\bar{u}(t,z,k)+b(\bar{u}(t,z,k))+g(\bar{u}(t,z,k))\,\partial_t \bar{\mathcal{W}}(t,z,k),\ \ \ \ t>0,}\\
\vs
\ds{\bar{u}(0,z,k)=\varphi^\wedge(z,k),\ \ \ (z,k) \in\,\Gamma.}
\end{array}\r.\]
is well posed in $L^2(\boldsymbol{\Omega};C([0,T];L^2(\Gamma,d\nu_\gamma)))$. Finally, as a consequence of \eqref{ultimo}, we can prove the main result of this paper, namely
\[\lim_{\e\to 0}\mathbf{E}\sup_{ t \in\,[\tau,T]}|u_\e(t)-\bar{u}(t)\circ \Pi\,|^p_{L^2(\mathbb{R}^2,\gamma\circ \Pi(x)dx)}=0.\]

\bigskip

To conclude, we would like to stress the fact that the techniques we are currently developing to deal with SPDEs on graphs can be further developed to treat more sophisticated and complex situations. For example, one could consider the case there are several different types of particles, instead of only one,  as in the present paper. In this case we expect to get a system of SPDEs on a graph. Moreover, if the original flow does not moves on $\mathbb{R}^2$, but on a $2$-dimensional surface (for example a $2D$  torus), with a positive genus, the underling dynamics  can have delays at some vertices. This leads to certain effects for SPDEs on the graph that are worth of investigation. Finally, in the multidimensional case, when several conservation laws are present, we expect will obtain SPDEs on a generalization of a graph, namely an open book (see \cite[Chapter 9]{fw12}). We are planning to address in forthcoming papers.

\section{Notations and preliminaries}
\label{sec2}

We consider here the Hamiltonian system 
\begin{equation}
\label{ham}
\dot{X}(t)=\bar{\nabla} H(X(t)),\ \ \ \ \ X(0)=x \in\,\mathbb{R}^2.
\end{equation}

Throughout the present paper, we will assume that the Hamiltonian $H$ is a generic function, with non degenerate critical points, and having quadratic growth, for $|x|\to \infty$. More precisely
\begin{Hypothesis}
\label{H1}
The mapping $H:\mathbb{R}^2\to\mathbb{R}$ satisfies the following conditions.
\begin{enumerate}
\item It is four times continuously differentiable, with bounded second derivative.
\item It has only a finite number of critical points $x_1,\ldots, x_n$. The matrix of second derivatives $D^2H(x_i)$ is non degenerate, for every $i=1,\ldots,n$ and $H(x_i)\neq H(x_j)$,  if $i\neq j$.
\item There exist three positive constants $a_1, a_2, a_3$ such that
$H(x)\geq a_1\,|x|^2$, $|\nabla H(x)|\geq a_2\,|x|$ and $\Delta H(x)\geq a_3$, for all $x \in\,\reals^2$, with $|x|$ large enough.
\item We have 
\[\min_{x \in\,\mathbb{R}^2}H(x)=0.\]

\end{enumerate}

\end{Hypothesis}

Notice that we can always assume condition 4. without any loss of generality.

\medskip

Once introduced the Hamiltonian $H$, for every $z\geq 0$, we denote by $C(z)$ the $z$-level set 
\[C(z)=\le\{x \in\,\mathbb{R}^2\,:\,H(x)=z\r\}.\]
The set  $C(z)$ may consist of several connected components
\[C(z)=\bigcup_{k=1}^{N(z)} C_k(z),\]
and for every $x \in\,\mathbb{R}^2$  we have
\[X(0)=x\Longrightarrow X(t) \in\,C_{k(x)}(H(x)),\ \ \ \ t\geq 0,\]
where $C_{k(x)}(x)$ is the  connected component of the level set $C(H(x))$, to which  the point $x$ belongs.
For every $z \geq 0$ and $k=1,\ldots,N(z)$, we shall denote by $G_k(z)$ the domain of $\mathbb{R}^2$ bounded by the level set component $C_k(z)$.

\subsection{The graph $\Gamma$ and the identification map $\Pi$}

If we identify all points in $\mathbb{R}^2$  belonging to the same connected component of a given level set $C(z)$ of the Hamiltonian $H$, we obtain a graph $\Gamma$,  given by several intervals $I_1,\ldots I_n$ and vertices $O_1,\ldots, O_m$. The vertices will be of two different types,  external and internal vertices. External vertices correspond to local extrema of  $H$, while internal vertices correspond to saddle points of $H$. Among external vertices, we will also include $O_\infty$, the endpoint of the only unbounded interval in the graph, corresponding to the point at infinity.

In what follows, we shall denote by $\Pi:\mathbb{R}^2\to \Gamma$ the {\em identification map}, that associates to every point $x \in\,\mathbb{R}^2$ the corresponding point $\Pi(x)$ on the graph $\Gamma$. We have $\Pi(x)=(H(x),k(x))$, where $k(x)$ denotes the number of the interval on the graph $\Gamma$, containing the point $\Pi(x)$. If $O_i$ is one of the interior vertices, the second coordinate cannot  be chosen in a unique way, as there are three edges having $O_i$ as their endpoint. Notice that both $k(x)$ and $H(x)$ are  first integrals (a discrete and a continuous one, respectively) for  the Hamiltonian system \eqref{ham}.

On the graph $\Gamma$, a distance can be introduced in the following way. If $y_1=(z_1,k)$ and $y_2=(z_2,k)$ belong to the same edge $I_k$, then $d(y_1,y_2)=|z_1-z_2|$. In the case $y_1$ and $y_2$ belong to different edges, then
\[d(y_1,y_2)=\min\,\le\{d(y_1, O_{i_1})+d(O_{i_1},O_{i_2})+\cdots+d(O_{i_j},y_2)\r\},\]
where the minimum is taken over all possible paths from $y_1$ to $y_2$, through every possible sequence of vertices $O_{i_1},\ldots,O_{i_{j}}$, connecting $y_1$ to $y_2$.

\subsection{Some other notations and preliminary facts related to the Hamiltonian}
\label{ss2.2}

If $z$ is not a critical value, then each $C_k(z)$ consists of one periodic trajectory of the vector field $\bar{\nabla}H(x)$. If $z$ is a local extremum of $H(x)$, then, among the components of $C(z)$ there is a set consisting of one point, the rest point of the flow. If $H(x)$ has a saddle point at some point $x_0$ and $H(x_0)=z$, then $C(z)$ consists of three trajectories, the equilibrium point $x_0$ and the two trajectories that have $x_0$ as their limiting point, as $t\to \pm \infty$.

\medskip

We introduce here some other notations related to the Hamiltonian $H$, that will be used throughout the paper.
\begin{itemize}
\item[-] For every edge $I_k$, we denote 
\[G_k =\le\{ x \in\,\mathbb{R}^2\,:\ \Pi(x) \in\,\mathring{I}_k \r\}.\]
\item[-] For every $0\leq z_1<z_2$ and for every edge $I_k$, we denote
\[G_k(z_1,z_2)=G_k(z_2,z_1)=\le\{x \in\,G_k\,:\,z_1<H(x)<z_2\r\}.\]
\item[-] For every $\d>0$, we denote
\[G(\pm \d)=\bigcup_{i=1}^m G^{\,i}(\pm \d)=\bigcup_{i=1}^m\le\{ x \in\,\mathbb{R}^2\,:\,H(O_i)-\d<H(x)<H(O_i)+\d\r\},\]
(here, with some abuse of notation, we denote by $H(O_i)$ the value of the Hamiltonian $H$ at those $x \in\,\mathbb{R}^2$ such that $\Pi(x)=O_i$).
\item[-] For every vertex $O_i$ and edge $I_k$, we  denote
\[D^{\,i}=\le\{ x \in\,\mathbb{R}^2\,:\ \Pi(x) =O_i\r\},\ \ \ \ \ D_{k}^{\,i}=D^{\,i}\cap \bar{G}_k.\]
\item[-] For every value $z\geq 0$ taken by the function $H$ on the set $\bar{G}_k$, we denote
\[C_k(z)=\le\{ x \in\, \bar{G}_k\,:\,H(x)=z\r\}=C(z)\cap \bar{G}_k.\]
\item[-] For every $\d>0$ and every edge $I_k$ and vertex $O_i$ such that $I_k\sim O_i$, we denote
\[D(\pm \d)=\bigcup_{k, i \,:\, I_k\sim O_i} D_{k}^{\,i}(\pm \d)=\bigcup_{k, i\, :\, I_k\sim O_i}\,\le\{ x \in\,G_k\,:\ \text{dist}(\Pi(x) ,O_i)=\d\r\}.\]

\end{itemize}

Now, for every $(z,k) \in\,\Gamma$, we define
\begin{equation}
\label{fluid2}
T_k(z)=\oint_{C_k(z)}\frac 1{|\nabla H(x)|}\,dl_{z,k},\ \ \ \ \a_k(z)=\oint_{C_k(z)}|\nabla H(x)|\,dl_{z,k},
\end{equation}
where $dl_{z,k}$ is the length element on $C_k(z)$. Notice that $T_k(z)$ is the period of the motion along the level set $C_k(z)$.

As we have seen above, if $X(0)=x \in\,C_k(z)$, then $X(t) \in\,C_k(z)$, for every $t\geq 0$. As  known, for every $(z,k) \in\,\Gamma$ the probability measure
\begin{equation}
\label{brr}
d\mu_{z,k}:=\frac 1{T_k(z)}\,\frac 1{|\nabla H(x)|}\,dl_{z,k}\end{equation}
is invariant for system \eqref{ham} on the level set $C_k(z)$. Moreover, it is possible to prove that for any $0\leq z_1<z_2$ and $k=1,\ldots n$
\begin{equation}
\label{fluid10}
\int_{G_k(z_1,z_2)}u(x)\,dx=\int_{I_k\cap [(z_1,k),(z_2,k)]}\oint_{C_k(z)}\frac{u(x)}{|\nabla H(x)|}\,dl_{z,k}\,dz.
\end{equation}
In particular, if we take $u\equiv 1$, we get  
\[\text{area}(G_k(z_1,z_2))=\int_{I_k\cap [(z_1,k),(z_2,k)]} T_k(z)\,dz,\]
so that
\begin{equation}
\label{fluid3}
T_k(z)=\frac{dS_k(z)}{dz},\end{equation}
where $S_k(z)$ is the area  of the domain $G_k(z)$ bounded by the level set $C_k(z)$.
Finally, by the divergence theorem, it is immediate to check that
\[\a_k(z)=\le|\int_{G_k(z)}\Delta H(x)\,dx\r|=\le|\int_{G_k(z)}\omega_H(x)\,dx\r|,\]
where $\omega_H$ is the vorticity of the flow.

As discussed in \cite[section 8.1]{fw12}, since the Hamiltonian $H$ has only non-degenerate critical points, if $(z,k)$ approaches the endpoint of an edge $I_k$, corresponding to an external vertex $O_i=(H(x_j),k)$, then
\begin{equation}
\label{fluid4}
\lim_{(z,k)\to O_i} T_k(z)=\frac{2\,\pi}{\sqrt{\text{det}\, [\text{Hess} H(x_j)} ]}>0.
\end{equation}
If $(z,k)$ approaches the endpoint of an edge $I_k$, corresponding to an internal vertex $O_i=(H(x_j),k)$, then
\begin{equation}
\label{fluid5}
T_k(z)=\oint_{C_k(z)}\frac 1{|\nabla H(x)|}\,dl_{z,k}\sim \text{const}\,\le|\log\le|z-H(x_i)\r|\r|\to +\infty,\ \ \ \text{as}\ (z,k)\to O_i.\end{equation}
Finally, if $(z,k)$ approaches $O_\infty$, we have
\begin{equation}
\label{fluid6}
T_k(z)=O(1),\ \ \ \ \text{as}\ (z,k)\to O_\infty.\end{equation}
Actually,  for $|x|$ large, we have $H(x)\geq a_1\,|x|^2$, so that $S_k(z)=O(z)$ and hence, due to \eqref{fluid3},  \eqref{fluid6} follows.  

Next, we would like to recall that in \cite[Lemma 8.1.1]{fw12} it is proven that if $v \in\,C^1(\mathbb{R}^2)$, then for every fixed $k$ the mapping 
\[z \mapsto \oint_{C_k(z)}v(x)\,|\nabla H(x)|\,dl_{z,k},\]
is continuously differentiable with respect to $z$ such that $(z,k) \in\,\mathring{I}_k$, and 
\begin{equation}
\label{f50}
\frac d{dz}\oint_{C_k(z)}v(x)\,|\nabla H(x)|\,dl_{z,k}=\oint_{C_k(z)}\frac {\langle \nabla v(x),\nabla H(x)\rangle+v(x)\Delta H(x)}{|\nabla H(x)|}\,dl_{z,k}.\end{equation}

\begin{figure}
\centering
\includegraphics[height=8cm, width=12cm, bb=38 6 459 357]{Figure_1.eps}
\caption{The Hamiltonian, the level sets, the projection and the graph}
\end{figure}

\subsection{A limiting result}
\label{ssec2.3}

We consider here  the following random perturbation of system \eqref{ham}
\begin{equation}
\label{ham-epsilon}
d\tilde{X}_\e(t)=\bar{\nabla}H(\tilde{X}_\e(t))\,dt+\sqrt{\e}\,d\tilde{w}(t),\ \ \ \ \tilde{X}_\e(0)=x,
\end{equation}
where $\tilde{w}(t)$ is a two dimensional Wiener process and $\e>0$ is a small parameter. Because of the perturbation, the mapping $t\in\,[0,+\infty)\mapsto H(\tilde{X}_\e(t)) \in\,\mathbb{R}$ is not constant any more. Actually, the motion $\tilde{X}_\e(t)$ consists of a fast rotation along the deterministic unperturbed  trajectories and a slow motion across them. 

In what follows, it will be convenient to do a change of time and, with the time rescaling $t\mapsto t/\e$,  the process $\tilde{X}_\e(t/\e)$ will coincide in distribution with the solution of the stochastic equation
\begin{equation}
\label{ham-epsilon-bis}
d{X}_\e(t)=\frac 1\e\,\bar{\nabla}H({X}_\e(t))\,dt+d{w}(t),\ \ \ \ {X}_\e(0)=x,
\end{equation}
where ${w}(t)$ is another two dimensional Wiener process.  We will denote by $S_\e(t)$, $t\geq 0$, the Markov transition semigroup associated with equation \eqref{ham-epsilon-bis}. Namely, for every Borel bounded function $\varphi:\mathbb{R}^2\to \mathbb{R}$
\[S_\e(t)\varphi(x)=\mathbb{E}_x\varphi(X_\e(t)),\ \ \ \ t \geq 0,\ \ \ x \in\,\mathbb{R}^2.\]

Now, for every $x \in\,\mathbb{R}^2$, we consider the process $\Pi(X_\e(t))$, $t\geq 0 $, defined on the graph $\Gamma$, with $X_\e(0)=x$. In \cite[Chapter 8]{fw12} it  has been studied the limiting behavior, as $\e\downarrow 0$, of the process $\Pi(X_\e)$ in the space  $C([0,T];\Gamma)$, for any fixed $T>0$ and $x \in\,\mathbb{R}^2$. Namely, in \cite[Theorem 8.2.2]{fw12} it has been proved  that the process $\Pi(X_\e)$, which describes the slow motion of the motion $X_\e$, converges, in the sense of weak convergence of distributions in the space of continuous $\Gamma$-valued functions, to a diffusion process $\bar{Y}$ on $\Gamma$.

The process $\bar{Y}$ has been described in \cite[Theorem 8.2.1]{fw12}  in terms of its generator $\bar{L}$, which is given by suitable differential operators $\bar{{\cal L}}_k$ within each edge $I_k$ of the graph and by certain gluing conditions at the interior vertices $O_i$ of the graph.

For each $k=1,\ldots,n$, the differential operator $\bar{{\cal L}}_k$,  acting on functions $f$ defined on the edge $I_k$,  has the form
\begin{equation}
\label{lk}
\bar{{\cal L}}_k f(z)=\frac 1{2\,T_k(z)}\frac d{dz}\le(\a_k\,\frac{df}{dz}\r)(z),\ \ \ \ 
\end{equation}
where $T_k(z)$ and $\a_k(z)$ are the mappings defined in \eqref{fluid2}. The operator $\bar{L}$, acting on functions $f$ defined on the graph $\Gamma$, is defined as
\[\bar{L}f(z,k)=\bar{{\cal L}}_k f(z),\ \ \ \text{if}\ (z,k)\ \text{is an interior point of the edge $I_k$}.\]
It is immediate to check that each operator $\bar{{\cal L}}_k $ can be represented as $\bar{{\cal L}}_k =(d/dv_k)(d/du_k)$, where
\[v^\prime_k(z)=\oint_{C_k(z)}\frac 1{|\nabla H(x)|}\,dl_{z,k},\ \ \ \ u^\prime_k(z)=\le(\oint_{C_k(z)}|\nabla H(x)|\,dl_{z,k}\r)^{-1}.\]
In view of this representation, by studying the limiting behavior of the function $u_k$ and $v_k$ at the vertices of the graph $\Gamma$, it is possible to check that the internal vertices, corresponding to the saddle points of $H$, are accessible, while the external vertices and the vertex $O_\infty$ are inaccessible.

The domain $D(\bar{L})$ is defined as the set of continuous functions on the graph $\Gamma$, that are twice continuously differentiable in the interior part of each edge of the graph, such that for any vertex $O_i=(z_i,k_{i_1})=(z_i, k_{i_2})=(z_i,k_{i_3})$ there exist finite
\[ \lim_{(z,k_{i_j})\to O_i}\bar{L} f(z,k_{i_j}),\ \ \ \ j=1,2,3,\]
and the limits do not depend on the edge $I_{k_{i_j}}\sim O_i$. Moreover, for each interior vertex $O_i$ the following gluing condition is satisfied
\begin{equation}
\label{gluing}
\sum_{j=1}^{3} \pm\a_{k_{i_j}}(z_i)d_{k_{i_j}}f(z_i,k_{i_j})=0,\end{equation}
where $d_{k_{i_j}}$ is the differentiation along $I_{k_{i_j}}$ and the sign $+$ is taken if the $H$-coordinate increases along $I_{k_{i_j}}$ and the sign $-$ is taken otherwise. 

The operator $(\bar{L}, D(\bar{L})$ is a non-standard operator, because it is a differential operator on a graph, endowed with suitable gluing conditions and because it is degenerate at the vertices of the graph. Nevertheless in  \cite[Theore 8.2.1]{fw12} it is shown that it is the generator of a Markov process $\bar{Y}$ on the graph $\Gamma$. In what follows, we shall denote by $\bar{\mathbb{P}}_{(z,k)}$ and $\bar{\mathbb{E}}_{(z,k)}$ the probability and the expectation associated with $\bar{Y}$, starting from $(z,k) \in\,\Gamma$. Moreover, we shall denote by $\bar{S}(t)$ the semigroup associated with $\bar{Y}$, defined by
\[\bar{S}(t) f(z,k)=\bar{\mathbb{E}}_{(z,k)}f(\bar{Y}(t)),\]
for every bounded Borel function $f:\Gamma\to \mathbb{R}$.

As we mentioned above,  in \cite[Theorem 8.2.2]{fw12} it has been proved that the process $\Pi(X_\e(\cdot))$ is weakly convergent to $\bar{Y}$ in $C([0,T];\Gamma)$, for every $T>0$   and $x \in\,\mathbb{R}^2$. Namely, for every continuous functional $F$ defined on $C([0,T];\Gamma)$ and $x \in\,\mathbb{R}^2$
\begin{equation}
\label{limite-fund}
\lim_{\e\to 0}
\mathbb{E}_x\,F(\Pi(X_\e(\cdot)))=\bar{\mathbb{E}}_{\Pi(x)}\,F(\bar{Y}(\cdot)).
\end{equation}

\section{Functions and operators defined on the graph $\Gamma$}
\label{sec3}

We fix here a continuous mapping $\gamma:\Gamma\to (0,+\infty)$ such that
\begin{equation}
\label{fluid1}
\sum_{k=1}^n \int_{I_k} \gamma(z,k)\,T_k(z)\,dz<\infty,
\end{equation}
where,  we  recall,
\[T_k(z)=\oint_{C_k(z)}\frac 1{|\nabla H(x)|}\,dl_{z,k}.\]
If we define
\[\gamma^{\vee}(x)=\gamma(\Pi(x)),\ \ \ \ x \in\,\mathbb{R}^2,\]
we have that $\gamma^{\vee}:\mathbb{R}^2\to (0,+\infty)$ is a  bounded continuous function in $L^1(\mathbb{R}^2)$.
Actually, due to \eqref{fluid10} we have
\[\begin{array}{l}
\ds{\int_{\mathbb{R}^2} \gamma^{\vee}(x)\,dx =\sum_{k=1}^n \int_{I_k} \oint_{C_k(z}\frac{\gamma^{\vee}(x)}{|\nabla H(x)|}\,dl_{z,k}\,dz=\sum_{k=1}^n \int_{I_k} \gamma(z,k)\oint_{C_k(z}\frac{1}{|\nabla H(x)|}\,dl_{z,k}\,dz}\\
\vs
\ds{=\sum_{k=1}^n \int_{I_k} \gamma(z,k) T_k(z)\,dz<\infty.}
\end{array}\]

In what follows, we shall define
\[H_\gamma=\le\{ u:\mathbb{R}^2\to \mathbb{R}\,:\,\int_{\mathbb{R}^2}|u(x)|^2\,\gamma^{\vee}(x)\,dx<\infty\r\}=L^2(\mathbb{R}^2,\gamma^{\vee}(x)\,dx),\]
and
\[\bar{H}_\gamma=\le\{ f:\Gamma \to \mathbb{R}\,:\,\sum_{k=1}^n\int_{I_k}|f(z,k)|^2\,\gamma(z,k)\,T_k(z)\,dz<\infty\r\}=L^2(\Gamma,\nu_\gamma),\]
where
 the measure $\nu_\gamma$ is defined as
\[\nu_\gamma(A):= \sum_{k=1}^n \int_{I_k} \mathbb{I}_A(z,k) \,\gamma(z,k)\,T_k(z)\,dz,\ \ \ \ A\subseteq \mathcal{B}(\Gamma).\]

Now, for every $u:\mathbb{R}^2\to \mathbb{R}$ we define
\[u^{\wedge }(z,k)=\frac 1{T_k(z))}\oint_{C_k(z)} \frac{u(x)}{|\nabla H(x)|}\,dl_{z,k}=\oint_{C_k(z)} u(x)\,d\mu_{z,k},\ \ \ \ (z,k) \in\,\Gamma,\]
and for every $f :\Gamma\to \mathbb{R}$ we define
\[f^{\vee}(x)=f(\Pi(x)),\ \ \ \ x \in\,\mathbb{R}^2.\]

\begin{Proposition}
\label{p31}
Assume the Hamiltonian $H$ satisfies Hypothesis \ref{H1} and  $\gamma:\Gamma\to (0,+\infty)$ is a  weight function satisfying condition \eqref{fluid1}. Then, the following holds.
\begin{enumerate}
\item For every $u \in\,H_\gamma$ we have $u^{\wedge} \in\,\bar{H}_\gamma$ and
\begin{equation}
\label{fluid7}
|u^{\wedge}|_{\bar{H}_\gamma}\leq |u|_{H_\gamma}.\end{equation}
\item For every $f \in\,\bar{H}_\gamma$ we have $f^{\vee } \in\,{H}_\gamma$ and
\begin{equation}
\label{fluid8}
|f^{\vee}|_{{H}_\gamma}= |f|_{\bar{H}_\gamma}.\end{equation}
\end{enumerate}
\end{Proposition}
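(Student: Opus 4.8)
The plan is to prove the two statements essentially by unwinding the definitions and applying the coarea-type identity \eqref{fluid10} together with Jensen's inequality for the probability measures $d\mu_{z,k}$.

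For part 2, the easier one, I would start from the definition $f^{\vee}(x) = f(\Pi(x)) = f(H(x),k(x))$. Then
\[
|f^{\vee}|_{H_\gamma}^2 = \int_{\mathbb{R}^2} |f(\Pi(x))|^2\,\gamma^{\vee}(x)\,dx,
\]
and since the integrand is constant on each connected component $C_k(z)$ of each level set, I would split $\mathbb{R}^2$ into the sets $G_k$ (up to the measure-zero set of critical level sets and the point at infinity, which do not contribute), apply \eqref{fluid10} with $u(x) = |f(\Pi(x))|^2 \gamma^{\vee}(x) = |f(z,k)|^2 \gamma(z,k)$ constant along $C_k(z)$, and pull it out of the inner integral. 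This produces exactly $\sum_k \int_{I_k} |f(z,k)|^2\gamma(z,k) T_k(z)\,dz = |f|_{\bar H_\gamma}^2$, giving the equality \eqref{fluid8}. (In particular this shows $f^\vee \in H_\gamma$.)

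For part 1, I would similarly write, using \eqref{fluid10} with $u$ replaced by $|u|^2\gamma^{\vee}$,
\[
|u|_{H_\gamma}^2 = \sum_{k=1}^n \int_{I_k}\oint_{C_k(z)} \frac{|u(x)|^2\,\gamma^{\vee}(x)}{|\nabla H(x)|}\,dl_{z,k}\,dz
= \sum_{k=1}^n \int_{I_k}\gamma(z,k)\,T_k(z)\,\Bigl(\oint_{C_k(z)} |u(x)|^2\,d\mu_{z,k}\Bigr)dz,
\]
using that $\gamma^\vee$ is constant along $C_k(z)$ and the definition \eqref{brr} of $d\mu_{z,k}$. Since $d\mu_{z,k}$ is a probability measure, Jensen's inequality gives
\[
\oint_{C_k(z)} |u(x)|^2\,d\mu_{z,k} \geq \Bigl(\oint_{C_k(z)} u(x)\,d\mu_{z,k}\Bigr)^2 = |u^{\wedge}(z,k)|^2,
\]
and integrating this inequality against $\gamma(z,k) T_k(z)\,dz$ yields $|u|_{H_\gamma}^2 \geq |u^{\wedge}|_{\bar H_\gamma}^2$, which is \eqref{fluid7}; in particular $u^\wedge \in \bar H_\gamma$.

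The only genuine subtlety, and the step I would be most careful about, is the passage from the identity \eqref{fluid10} — which is stated for a single strip $G_k(z_1,z_2)$ between two non-critical levels — to a global identity over all of $\mathbb{R}^2$. This requires checking that the union of the critical level sets $C(H(x_i))$ together with the unbounded tail near $O_\infty$ form a set that can be handled by a limiting/exhaustion argument: the critical level sets have two-dimensional Lebesgue measure zero, so $\mathbb{R}^2 = \bigcup_k G_k$ up to a null set, and one writes each $G_k$ as an increasing union of strips $G_k(z_1,z_2)$ with $z_1 \downarrow \inf$, $z_2 \uparrow \sup$ of the $H$-values on $\bar G_k$, applying monotone convergence. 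For $u \in H_\gamma$ (resp. $f \in \bar H_\gamma$) all quantities are nonnegative, so monotone convergence applies cleanly and no integrability obstruction arises. Everything else is a direct computation.
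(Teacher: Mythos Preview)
Your proof is correct and follows essentially the same approach as the paper: both arguments unwind the definitions, apply the coarea identity \eqref{fluid10}, and use Jensen's inequality for the probability measures $d\mu_{z,k}$ (the paper writes it as $\left|\oint u\,d\mu_{z,k}\right|^2 \leq \oint |u|^2\,d\mu_{z,k}$ directly). Your discussion of the exhaustion/monotone convergence step needed to pass from strips to all of $\mathbb{R}^2$ is a point the paper leaves implicit, so your treatment is if anything slightly more careful.
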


\begin{proof}
Let $u \in\,H_\gamma$. Recalling how the probability measure $\mu_{z,k}$ has been defined in \eqref{brr}, due to \eqref{fluid10} we have 
\[
\begin{array}{l}
\ds{|u^{\wedge}(x)|^2_{\bar{H}_\gamma}=\sum_{k=1}^n \int_{I_k}|u^{\wedge} (z,k)|^2\gamma(z,k) T_k(z)\,dz}\\
\vs
\ds{
=\sum_{k=1}^n \int_{I_k}\le|\oint_{C_k(z)} u(x)\,d\mu_{z,k}\r|^2\gamma(z,k)\,T_k(z)\,dz\leq \sum_{k=1}^n\int_{I_k}\oint_{C_k(z)} |u(x)|^2 d\mu_{z,k}\,\gamma(z,k)\,T_k(z)\,dz}\\
\vs
\ds{=\sum_{k=1}^n \int_{I_k}\oint_{C_k(z)} \frac{|u(x)|^2\gamma^{\vee}(x)}{|\nabla H(x)|}\,dl_{z,k}\,dz=\int_{\mathbb{R}^2}|u(x)|^2\,\gamma^{\vee}(x)\,dx=|u|_{H_\gamma}^2.}
\end{array}\]
This implies that $u^{\wedge} \in\,\bar{H}_{\gamma}$ and \eqref{fluid7} holds.

Concerning the second part of the Proposition, by using again \eqref{fluid10} for every $f \in\,\bar{H}_\gamma$ we have
\[\begin{array}{l}
\ds{ |f^{\vee}|^2_{H_\gamma}=\int_{\mathbb{R}^2}|f^\vee(x)|^2\gamma^{\vee}(x)\,dx=\int_{\mathbb{R}^2}|f(\Pi(x))|^2\gamma(\Pi(x))\,dx}\\
\vs
\ds{=\sum_{k=1}^n \int_{I_k}\oint_{C_k(z)} \frac{|f(z,k)|^2\gamma(z,k)}{|\nabla H(x)|}\,dl_{z,k}\,dz=\sum_{k=1}^n\int_{I_k}|f(z,k)|^2\gamma(z,k)\,T_k(z)\,dz=|f|^2_{\bar{H}_{\gamma}}.}
\end{array}\]
This allows to conclude that $f^{\vee} \in\,H_\gamma$ and \eqref{fluid8} holds.

\end{proof}

For every $u \in\,H_\gamma$ and $f \in\,\bar{H}_\gamma$, we have
\begin{equation}
\label{fluid11}
\le<f,u^\wedge \r>_{\bar{H}_\gamma}=\le<f^\vee, u\r>_{H_\gamma}.
\end{equation}
Actually, we have
\[\begin{array}{l}
\ds{\le<f,u^\wedge \r>_{\bar{H}_\gamma}=\sum_{k=1}^n \int_{I_k} f(z,k) u^\wedge (z,k)\,\gamma(z,k)\,T_k(z)\,dz}\\
\vs
\ds{=\sum_{k=1}^n \int_{I_k} f(z,k)\oint_{C_k(z)}u(x)\,d\mu_{z,k} \,\gamma(z,k)\,T_k(z)\,dz}\\
\vs
\ds{=\sum_{k=1}^n \int_{I_k}\oint_{C_k(z)}\frac{f(\Pi(x)) u(x)\gamma^{\vee}(x)}{|\nabla H(x)|}\,dl_{z,k} \,dz,}\end{array}\]
so that, thanks to \eqref{fluid10}, we can conclude
\[\le<f,u^\wedge \r>_{\bar{H}_\gamma}=\int_{\mathbb{R}^2} f(\Pi(x)) u(x)\gamma^\vee(x)\,dx=\le<f^\vee, u\r>_{H_\gamma}.\]
Moreover, for every $f \in\,\bar{H}_\gamma$ and $u \in\,H_\gamma$, we have 
\[\begin{array}{l}
\ds{\le(f^\vee u\r)^\wedge(z,k)=\oint_{C_k(z)} f^\vee(x)u(x)\,d\mu_{z,k}=\oint_{C_k(z)} f(\Pi(x))u(x)\,d\mu_{z,k}}\\
\vs
\ds{=f(z,k)\,\oint_{C_k(z)} u(x)\,d\mu_{z,k}=f(z,k)u^\wedge (z,k),}
\end{array}\]
so that
\begin{equation}
\label{fluid12}
\le(f^\vee u\r)^\wedge =f u^\wedge.
\end{equation}

As a consequence of \eqref{fluid11} and \eqref{fluid12}, we conclude that for any $f,g \in\,\bar{H}_\gamma$
\[\langle f,g\rangle_{\bar{H}_\gamma}=\langle\le(f^\vee\r)^\wedge,g\rangle_{\bar{H}_\gamma}=\langle f^\vee,g^\vee\rangle_{H_\gamma}.
\]
In particular, if $\{f_n\}_{n \in\, \mathbb{N}}$ is an orthonormal system in $\bar{H}_\gamma$, it follows that the system $\{(f_n)^\vee\}_{n \in\, \mathbb{N}}$ is  orthonormal  in ${H}_\gamma$.

\begin{Lemma}
\label{lf51}
Assume $u \in\,C^1(\mathbb{R}^2)$. Then, under Hypothesis \ref{H1}, the mapping $u^\wedge$ is continuously differentiable with respect to $z$ in the set $\bigcup_{k} \mathring{I}_k$. Moreover, for every fixed $\d>0$ and $M>0$
\begin{equation}
\label{f51}
\sup_{\substack{(z,k) \in\,\bigcup_{h} \le[\mathring{I}_h\cap \Pi(G(\pm \d))^c\r]\\ z\leq M}}\le|\frac{\partial u^\wedge}{\partial z}(z,k)\r|=: c_{\d,M}<\infty.
\end{equation}

\end{Lemma}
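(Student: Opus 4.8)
The starting point is the identity \eqref{f50} from \cite[Lemma 8.1.1]{fw12}, which tells us that for $v\in C^1(\mathbb{R}^2)$ the map $z\mapsto\oint_{C_k(z)}v(x)|\nabla H(x)|\,dl_{z,k}$ is $C^1$ on $\mathring{I}_k$ with an explicit derivative. Writing $u^\wedge(z,k)=\frac{1}{T_k(z)}\oint_{C_k(z)}\frac{u(x)}{|\nabla H(x)|}\,dl_{z,k}$, I first want to re-express both the numerator and $T_k$ as integrals of the form to which \eqref{f50} applies. For $T_k$ this is immediate with the choice $v\equiv |\nabla H|^{-2}$; but that $v$ is not $C^1$ on all of $\mathbb{R}^2$ because of the critical points, so one either restricts to the region $\Pi(G(\pm\d))^c$ (away from the vertices, where $|\nabla H|$ is bounded below and smooth) or invokes \eqref{fluid3}, $T_k(z)=S_k'(z)$, together with the known smoothness of the area function $S_k$ on $\mathring I_k$. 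Similarly the numerator $N_k(z):=\oint_{C_k(z)}\frac{u(x)}{|\nabla H(x)|}\,dl_{z,k}$ equals $\oint_{C_k(z)}\big(u(x)|\nabla H(x)|^{-2}\big)|\nabla H(x)|\,dl_{z,k}$, so away from the critical points \eqref{f50} applies and gives $N_k\in C^1(\mathring I_k)$ with a formula for $N_k'$. Since $T_k>0$ on $\mathring I_k$, the quotient rule then yields $u^\wedge(\cdot,k)\in C^1(\mathring I_k)$ with
\[
\frac{\partial u^\wedge}{\partial z}(z,k)=\frac{N_k'(z)T_k(z)-N_k(z)T_k'(z)}{T_k(z)^2}.
\]

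The second, quantitative part \eqref{f51} is where the real work lies, and the main obstacle is controlling this expression \emph{uniformly} up to the boundary of the admissible region $\bigcup_h[\mathring I_h\cap\Pi(G(\pm\d))^c]\cap\{z\le M\}$ — in particular near the endpoints of edges that abut a vertex at distance exactly $\d$. The plan is to bound each ingredient. On the closed region $\{x:\Pi(x)\in\Pi(G(\pm\d))^c,\ H(x)\le M\}$, which is compact, $|\nabla H(x)|$ is bounded below by a positive constant $c_\d$ and above, and $\Delta H$, $u$, $\nabla u$ are all bounded; call the relevant sup-norms $K_{\d,M}$. The length $\ell_k(z)=\oint_{C_k(z)}dl_{z,k}$ and the period $T_k(z)$ are bounded above and below on this region by \eqref{fluid4}–\eqref{fluid6} and continuity — they stay away from $0$ and $\infty$ once we are at distance $\ge\d$ from every vertex and at height $\le M$. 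Using \eqref{f50} twice, $T_k'(z)$ and $N_k'(z)$ are integrals over $C_k(z)$ of expressions built from $u,\nabla u,\nabla H,\Delta H$ and negative powers of $|\nabla H|$, hence bounded by a constant depending only on $\d,M$ and the (finitely many) edges. Plugging these bounds into the quotient-rule formula gives \eqref{f51} with $c_{\d,M}$ depending only on $\d$, $M$, the Hamiltonian, and $\|u\|_{C^1}$ on the relevant compact set.

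A couple of technical points I would be careful about. First, the differentiation formula \eqref{f50} is stated for $v\in C^1(\mathbb{R}^2)$; to apply it with $v=u|\nabla H|^{-2}$ I must note that, although this $v$ is only $C^1$ \emph{away} from the critical points, the identity and its proof are purely local along the level curve $C_k(z)$, which for $(z,k)$ in the admissible region lies entirely in the open set $\{|\nabla H|>0\}$ where $v$ is genuinely $C^1$; alternatively one fixes a cutoff and applies \eqref{f50} to $v$ times a function of $H$ that is $\equiv 1$ on the region of interest. Second, to get a \emph{single} constant over the whole region (not just over each edge separately) I use that there are only finitely many edges $I_1,\dots,I_n$ and take the maximum of the per-edge bounds; the compactness of $\{x:\Pi(x)\in\Pi(G(\pm\d))^c,\ H(x)\le M\}$ is what makes each per-edge bound finite. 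This is the step I expect to be the crux: everything reduces to showing that, once bounded away from vertices by $\d$ and capped in height by $M$, all the geometric quantities $|\nabla H|^{-1}$, $\ell_k$, $T_k$, $T_k'$ degenerate nowhere, which is exactly the content of the non-degeneracy of critical points plus the quadratic growth in Hypothesis \ref{H1}.
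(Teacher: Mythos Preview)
Your proposal is correct and follows essentially the same route as the paper: write $u^\wedge(z,k)=N_k(z)/T_k(z)$, apply the differentiation formula \eqref{f50} with $v=u|\nabla H|^{-2}$ (and with $u\equiv 1$ for $T_k$), and use the quotient rule together with the lower bound on $T_k$ away from the vertices. If anything, you are more explicit than the paper about the technical points---the locality of \eqref{f50} when $v$ is only $C^1$ away from the critical points, the compactness of the region $\{\Pi(x)\in\Pi(G(\pm\delta))^c,\ H(x)\le M\}$, and the finiteness of the edge set---but these are exactly the ingredients implicitly used in the paper's argument.
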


\begin{proof}
If $u \in\,C^1(\mathbb{R}^2)$, then the mapping 
\[v(x)=\frac{u(x)}{|\nabla H(x)|^2},\ \ \ \ x \in\, \bigcup_{k} \mathring{I}_k,\]
is continuously differentiable. As we have seen in Subsection \ref{ss2.2}, due to \cite[Lemma 8.1.1]{fw12}, this implies 
that the mapping
\[z \mapsto \oint_{C_k(z)}\frac{u(x)}{|\nabla H(x)|}\,dl_{z,k}=\oint_{C_k(z)} v(x)|\nabla H(x)|\,dl_{z,k}\]
is continuously differentiable for every $z$ such that $(z,k) \in\,\mathring{I}_k$. Moreover,  thanks to \eqref{f50}, for every $\d>0$ the derivative is uniformly bounded with respect to $z$ such that  $(z,k) \in\,\Pi(G(\pm \d))^c\cap \mathring{I}_k$.
In particular, if we take $u=1$, we get that the mapping $z \mapsto T_k(z)$ is continuously differentiable and the derivative is  uniformly bounded with respect to $z$ such that  $(z,k) \in\,\Pi(G(\pm \d))^c\cap \mathring{I}_k$.
Therefore, as 
\[u^\wedge(z,k)=\oint_{C_k(z)} u(x)\,d\mu_{z,k}=\frac 1{T_k(z)}\oint_{C_k(z)} \frac {u(x)}{|\nabla H(x)|}\,dl_{z,k},\]
and $T_k(z)$ remains uniformly bounded from zero for $(z,k) \in\,\Pi(G(\pm \d))^c\cap \mathring{I}_k$, we can conclude.  

\end{proof}

\medskip

For every $Q \in\,\mathcal{L}(H_\gamma)$ and $f \in\,\bar{H}_{\gamma}$, we define
\[Q^\wedge f=(Q f^\vee)^\wedge.\]
Thanks to Proposition \ref{p31}, we have that
\[\|Q^\wedge\|_{\mathcal{L}(\bar{H}_\gamma)}\leq \|Q\|_{\mathcal{L}({H}_\gamma)}.\]
In an analogous way, for any $A \in\,\mathcal{L}(\bar{H}_\gamma)$ and $u \in\,H_\gamma$, we define
\[A^\vee u=(A u^\wedge)^\vee.\]
As above, due to Proposition \ref{p31},  we have that $A^\vee \in\,\mathcal{L}(H_\gamma)$ and 
\[\|A^\vee\|_{\mathcal{L}(H_\gamma)}\leq \|A\|_{\mathcal{L}(\bar{H}_\gamma)}.\]
Moreover, due to \eqref{fluid12}, we can check immediately that $(A^\vee)^\wedge=A$.

\section{The semigroup $S_\e(t)$ in the weighted space $H_\gamma$}
\label{sec3bis}

Since $\text{div}\bar{\nabla} H=0$,  it is immediate to check that the Lebesgue measure is invariant for the semigroup $S_\e(t)$, for every fixed $\e>0$. In particular,  $S_\e(t)$ can be extended to a contraction semigroup on $L^p(\mathbb{R}^p)$, for every $p\geq 1$ and $\e>0$.

In what follows, we shall make the following fundamental assumption about the behavior of $S_\e(t)$ in the weighted space ${H}_\gamma$

\begin{Hypothesis}
\label{H2}
The semigroup $S_\e(t)$ is well defined on $H_\gamma$, for every $\e>0$. Moreover, for every fixed $T>0$, there exists $c_T>0$ such that
\begin{equation}
\label{f65}
\|S_\e(t)\|_{\mathcal{L}(H_\gamma)}\leq c_T,\ \ \ \ \ t \in\,[0,T],\ \ \ \ \e>0.
\end{equation}
\end{Hypothesis}

In next proposition  we will show that, in fact, Hypothesis \ref{H2} is fulfilled in some relevant cases.

\begin{Proposition}
Assume  that the Hamiltonian $H$ satisfies Hypothesis \ref{H1}. Then, there exists  a strictly positive continuous function $\gamma:\Gamma\to (0,+\infty)$,  satisfying \eqref{fluid1},  such that Hypothesis \ref{H2} is verified.
\end{Proposition}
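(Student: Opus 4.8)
The goal is to exhibit a single weight $\gamma$ on $\Gamma$ making $S_\e(t)$ uniformly bounded on $H_\gamma = L^2(\mathbb{R}^2,\gamma^\vee(x)\,dx)$ for small time, uniformly in $\e$. The natural idea is to look for $\gamma^\vee$ as a Lyapunov-type weight adapted to the drift $\e^{-1}\bar\nabla H$: since $\bar\nabla H$ is divergence-free, the advection part is skew-symmetric in any $L^2$ with the Lebesgue measure, but once we reweight by $\rho(x)=\gamma^\vee(x)$ this skew-symmetry is broken and a term proportional to $\e^{-1}\langle \bar\nabla H,\nabla\rho\rangle/\rho$ appears. The crucial observation is that if we choose $\rho$ to be a function of $H$ alone --- equivalently, $\gamma=\gamma(z)$ on each edge, i.e.\ $\gamma^\vee=\Phi\circ H$ for a smooth decreasing $\Phi:[0,\infty)\to(0,\infty)$ --- then $\langle\bar\nabla H,\nabla\rho\rangle = \Phi'(H)\langle\bar\nabla H,\nabla H\rangle = 0$, because $\bar\nabla H\perp\nabla H$. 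Hence the dangerous $\e^{-1}$ term disappears identically, and what remains is an $\e$-independent estimate.

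Concretely, I would argue as follows. First, fix $\Phi(z) = e^{-\lambda z}$ (or more cautiously $\Phi(z)=(1+z)^{-p}$ for large $p$) and set $\gamma(z,k) := \Phi(z)$ for all edges; one must check condition \eqref{fluid1}, i.e.\ $\sum_k\int_{I_k}\Phi(z)T_k(z)\,dz<\infty$. Using $T_k(z)=S_k'(z)$ from \eqref{fluid3} and the quadratic growth $H(x)\ge a_1|x|^2$ from Hypothesis \ref{H1}(3), one gets $S_k(z)=O(z)$ as $z\to\infty$, so $\sum_k\int \Phi(z)T_k(z)\,dz$ is (up to a boundary term) $\int_0^\infty \Phi(z)\,d(\sum_k S_k(z)) \le \int_{\mathbb{R}^2}\Phi(H(x))\,dx < \infty$ provided $\Phi$ decays fast enough against the quadratic Hamiltonian; $\Phi(z)=e^{-\lambda z}$ works for any $\lambda>0$, and $(1+z)^{-p}$ works for $p>1$. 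Second, and this is the analytic heart: compute $\frac{d}{dt}\|S_\e(t)u\|_{H_\gamma}^2$ along the SDE \eqref{ham-epsilon-bis}. Writing $\mathcal{L}_\e = \frac12\Delta + \frac1\e\langle\bar\nabla H,\nabla\cdot\rangle$ and $u_\e(t,x)=S_\e(t)u(x)$, one has
\[
\frac{d}{dt}\int_{\mathbb{R}^2}|u_\e|^2\,\rho\,dx = 2\int_{\mathbb{R}^2}(\mathcal{L}_\e u_\e)\,u_\e\,\rho\,dx,
\]
and integrating by parts the Laplacian term gives $-\int|\nabla u_\e|^2\rho\,dx + \tfrac12\int |u_\e|^2\Delta\rho\,dx$, while the advection term gives $\tfrac1\e\int |u_\e|^2\,\mathrm{div}(\rho\,\bar\nabla H)\,dx = \tfrac1\e\int|u_\e|^2\langle\bar\nabla H,\nabla\rho\rangle\,dx = 0$ since $\rho=\Phi\circ H$ and $\bar\nabla H\perp\nabla H$ (and $\mathrm{div}\,\bar\nabla H=0$). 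So
\[
\frac{d}{dt}\|u_\e(t)\|_{H_\gamma}^2 \le -\int|\nabla u_\e|^2\rho\,dx + \tfrac12\int|u_\e|^2\,(\Delta\rho)\,dx \le \Big(\sup_{x}\tfrac{\Delta\rho(x)}{\rho(x)}\Big)\|u_\e(t)\|_{H_\gamma}^2.
\]
One checks $\Delta\rho/\rho$ is bounded above: with $\rho=e^{-\lambda H}$, $\Delta\rho/\rho = \lambda^2|\nabla H|^2 - \lambda\Delta H$, which by Hypothesis \ref{H1} (bounded $D^2H$, so $\Delta H$ bounded; and $|\nabla H|$ has at most linear growth from $|D^2H|$ bounded plus finitely many critical points — wait, that gives $|\nabla H|^2$ quadratic, not bounded) --- so actually $e^{-\lambda H}$ gives an \emph{unbounded above} ratio; this forces the choice $\rho=(1+H)^{-p}$ instead, for which $\Delta\rho/\rho = p(p+1)\frac{|\nabla H|^2}{(1+H)^2} - p\frac{\Delta H}{1+H}$, and now $|\nabla H|^2/(1+H)^2$ is bounded (numerator $\lesssim |x|^2$, denominator $\gtrsim |x|^4$ for large $|x|$ by $H\ge a_1|x|^2$, and continuity handles the compact part), and $\Delta H/(1+H)$ is bounded, so $\sup_x \Delta\rho/\rho =: C < \infty$. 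Gronwall then yields $\|S_\e(t)\|_{\mathcal{L}(H_\gamma)}^2 \le e^{Ct} \le e^{CT}$ for $t\in[0,T]$, uniformly in $\e$, which is \eqref{f65}.

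Third, I would need a brief density/closability remark: the formal computation above is rigorous for $u$ smooth and decaying (so that all integrations by parts are justified — the quadratic growth of $H$ and the polynomial weight $\rho$ make the boundary terms at infinity vanish); then one extends to all of $H_\gamma$ by density, using that $S_\e(t)$ is a well-defined contraction on every $L^p(\mathbb{R}^2,dx)$ (as noted just before Hypothesis \ref{H2}) and a cutoff/approximation argument. The main obstacle — and the point that really needs care — is precisely the competition between the decay rate of $\Phi$ (needed for the integrability \eqref{fluid1}, which wants fast decay) and the boundedness of $\Delta\rho/\rho$ (which, because $|\nabla H|$ grows linearly, forbids exponential weights and constrains us to polynomial ones): one must verify that a polynomial weight $(1+z)^{-p}$ with $p>1$ simultaneously satisfies \eqref{fluid1} and has bounded $\Delta\rho/\rho$. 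Both hold under Hypothesis \ref{H1}, so such a $\gamma$ exists; everything else is a standard energy estimate plus Gronwall.
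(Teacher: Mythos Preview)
Your proposal is correct, and the key insight---choose $\gamma^\vee=\Phi\circ H$ so that $\langle\bar\nabla H,\nabla\gamma^\vee\rangle=\Phi'(H)\langle\bar\nabla H,\nabla H\rangle=0$, eliminating the $\e^{-1}$ term---is exactly the one the paper uses. The execution differs in two ways worth noting.

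First, the weight: you take $\Phi(z)=(1+z)^{-p}$ with $p>1$, while the paper takes $\Phi$ behaving like $e^{-\lambda\sqrt{z}}$ for large $z$ (and $\equiv 1$ near the critical values). Both satisfy \eqref{fluid1} and both give $\Delta\gamma^\vee\le c\,\gamma^\vee$; your rejection of $e^{-\lambda z}$ is correct, and the paper's $e^{-\lambda\sqrt{z}}$ is simply another way to tame the $|\nabla H|^2$ term. Your polynomial choice is arguably the simplest that works.

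Second, the analytic step: you run a direct energy estimate on $\tfrac{d}{dt}\|S_\e(t)u\|_{H_\gamma}^2$ with integration by parts, then Gronwall. The paper instead works on the \emph{adjoint} side: it applies It\^o's formula to $\gamma^\vee(Y_\e(t))$, where $Y_\e$ solves the adjoint SDE $dY_\e=-\e^{-1}\bar\nabla H(Y_\e)\,dt+d\hat w$, obtains $S_\e^\star(t)\gamma^\vee\le e^{ct}\gamma^\vee$ pointwise, and then uses the Markov inequality $|S_\e(t)u|^2\le S_\e(t)|u|^2$ together with the $L^2$ duality $\langle S_\e(t)|u|^2,\gamma^\vee\rangle=\langle|u|^2,S_\e^\star(t)\gamma^\vee\rangle$. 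The advantage of the paper's route is that the It\^o computation is fully rigorous as soon as $\gamma^\vee\in C^2$, so no density/integration-by-parts justification is needed; your direct estimate is equally valid but does require the closability remark you flag. Both arguments encode the same cancellation $\langle\nabla\gamma^\vee,\bar\nabla H\rangle=0$ and land on the same $e^{cT}$ bound.
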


\begin{proof}
According to condition 3. in Hypothesis \ref{H1}, we can fix 
\[z_0> \max_{i=1,\ldots,n} H(x_i)\]
 such that
\[H(x)\geq z_0 \Longrightarrow H(x)\geq a_1\,|x|^2.\]
Once fixed $z_0$, we take a positive decreasing function $h \in\,C^2([0,+\infty))$, such that
\[h(t)=\begin{cases}
\frac 12 \,\exp(-\la \le(\sqrt{t}-\sqrt{2z_0}\r)),  &  t\geq 2z_0,\\
1,  &  t\leq z_0,
\end{cases}\]
for some arbitrary $\la>0$, and define
\[\gamma(z,k)=h(z),\ \ \ \ (z,k) \in\,\Gamma.\]

In view of \eqref{fluid4} and \eqref{fluid5}, it is immediate to check that for every interval $I_k$ not having $O_\infty$ as its end point,
\[\int_{I_k} T_k(z)\,dz<\infty,\]
so that, as $|h(t)|\leq 1$, we get
\begin{equation}
\label{f200}
\int_{I_k} \gamma(z,k) T_k(z)\,dz<\infty.\end{equation}
Moreover, if $I_k\sim O_\infty$, due to \eqref{fluid6}, since $h(t)$ has exponential decay, we have that \eqref{f200} holds as well and hence we can conclude that \eqref{fluid1} holds.

Now, recalling that $\gamma^\vee(x)=\gamma(\Pi(x)),$  we have that
\[\gamma^\vee(x)=h(H(x)),\ \ \ \ x \in\,\mathbb{R}^2.\]
Since $h \in\,C^2([0,+\infty))$, this implies that $\gamma^\vee \in\,C^2(\mathbb{R}^2)$. Moreover,
\[\Delta \gamma^\vee(x)=h^{\prime \prime}(H(x))|\nabla H(x)|^2+h^\prime(H(x))\Delta h(x),\ \ \ \ x \in\,\mathbb{R}^2.\]
Hence, as for any $t\geq  2 z_0$ we have
\[h^\prime(t)=-\frac \la 2\, h(t) t^{-\frac 12},\ \ \ \ h^{\prime \prime}(t)= \frac \la 4\,h(t)\le(\la t^{-1}+t^{-\frac 32}\r),\]
we obtain
\[\begin{array}{l}
\ds{\Delta \gamma^\vee(x)=\frac \la 4\, \gamma^\vee(x)  \le[ \frac{|\nabla H(x)|^2}{H(x)}\le(\la+\frac{1}{H(x)^{\frac 12}}\r)-2\,\frac{\Delta H(x)}{H(x)^{\frac 12}}\r].}
\end{array}\]
Since $|\nabla H(x)|\leq c\,|x|$ and $|\Delta H(x)|\leq c$, for every $x \in\,\mathbb{R}^2$, and $H(x)\geq  a_1\,|x|^2$, if $H(x)\geq z_0$, we have
\[ \sup_{\substack{x \in\,\mathbb{R}^2,\\H(x)\geq z_0}}\le| \frac{|\nabla H(x)|^2}{H(x)}\le(\la+\frac{1}{H(x)^{\frac 12}}\r)-2\,\frac{\Delta H(x)}{H(x)^{\frac 12}}\r|=:\kappa_\la<\infty,\]
so that
\begin{equation}
\label{f66}
H(x)\geq 2 z_0 \Longrightarrow \Delta \gamma^\vee(x)\leq \frac {\la\,\kappa_\la} 4 \gamma^\vee(x). \end{equation}
On the other hand, since $\gamma^\vee \in\,C^2(\mathbb{R}^2)$, we have that 
\[\sup_{\substack{x \in\,\mathbb{R}^2,\\H(x)< 2 z_0}}|\Delta \gamma^\vee(x)|=:M<\infty,\]
and hence 
\[H(x)< 2 z_0 \Longrightarrow \Delta \gamma^\vee(x)\leq M\leq 2\,h(H(x))\,M=2\,\gamma^\vee(x)\,M. \]
Together with \eqref{f66}, this implies that there exists some $c>0$ such that
\begin{equation}
\label{f67}
\Delta \gamma^\vee(x)\leq c\,\gamma^\vee(x),\ \ \ \ x \in\,\mathbb{R}^2.
\end{equation}

The generator of the semigroup $S_\e(t)$ is the second order differential operator
\[\mathcal{L}_\e=\frac 12 \Delta+\frac 1\e\bar{\nabla} H(x)\cdot \nabla.\]
As $\text{div}\,\bar{\nabla} H(x)=0$, it is immediate to check that the adjoint $\mathcal{L}^\star_\e$ of the operator $\mathcal{L}_\e$ in $L^2(\mathbb{R}^2)$ is given by
\[\mathcal{L}_\e^\star=\frac 12 \Delta-\frac 1\e \bar{\nabla} H(x)\cdot \nabla.\]
This implies that the adjoint semigroup $S^\star_\e(t)$ is the Markov transition semigroup associated with the equation
\[dY_\e(t)=-\frac 1\e \bar{\nabla}H(Y_\e(t))\,dt+d\hat{w}(t),\ \ \ \ Y_\e(0)=x,\]
for some $2$d Brownian motion $\hat{w}(t)$.
Now, by using the It\^o formula, we have
\[\begin{array}{l}
\ds{\gamma^\vee(Y_\e(t))=\gamma^\vee(x)-\frac 1\e\int_0^t\langle \nabla\gamma^\vee(Y_\e(s)),\bar{\nabla}H(Y_\e(s))\rangle\,ds}\\
\vs
\ds{+\int_0^t\langle \nabla\gamma^\vee(Y_\e(s)),dw(s)\rangle+\frac 12\int_0^t\Delta\gamma^\vee(Y_\e(s))\,ds,}
\end{array}\]
and then, since 
\[\nabla \gamma^\vee(x)=h^\prime(H(x))\,\nabla H(x),\ \ \ \ \ x \in\,\mathbb{R}^2,\]
it follows
\[\begin{array}{l}
 \ds{\gamma^\vee(Y_\e(t))=\gamma^\vee(x)+\int_0^t\langle \nabla\gamma^\vee(Y_\e(s)),dw(s)\rangle+\frac 12\int_0^t\Delta\gamma^\vee(Y_\e(s))\,ds.}
\end{array}\]
In view of \eqref{f67}, by taking the expectation of both sides above, we get
\[S^\star_\e(t)\gamma^\vee(x)=\hat{\mathbb{E}}_x\gamma^\vee(Y_\e(t))\leq \gamma^\vee(x)+c\int_0^t\hat{\mathbb{E}}_x\gamma^\vee(Y_\e(s))\,ds=\gamma^\vee(x)+c\int_0^tS^\star_\e(s)\gamma^\vee(x)\,ds,\]
and, thanks to the Gronwall lemma, this allows to conclude that 
\begin{equation}
\label{f68}
S^\star_\e(t)\,\gamma^\vee(x)\leq e^{c\,t}\,\gamma^\vee(x),\ \ \ \ x \in\,\mathbb{R}^2,\ \ \ t\geq 0.\end{equation}

Now, for any $u \in\,H_\gamma$, we have
\[\begin{array}{l}
\ds{|S_\e(t)u|^2_{H_\gamma}=\int_{\mathbb{R}^2}|S_\e(t)u(x)|^2\,\gamma^\vee(x)\,dx\leq \int_{\mathbb{R}^2}S_\e(t)|u|^2(x)\,\gamma^\vee(x)\,dx}\\
\vs
\ds{=\langle S_\e(t)|u|^2,\gamma^\vee\rangle_{L^2(\mathbb{R}^2)}=\langle |u|^2,S^\star_\e(t)\gamma^\vee\rangle_{L^2(\mathbb{R}^2)}.}
\end{array}\]
Then, from \eqref{f68}, we have
\[|S_\e(t)u|^2_{H_\gamma}\leq e^{c\,T}\langle |u|^2,\gamma^\vee\rangle_{L^2(\mathbb{R}^2)}=e^{c\,T}|u|^2_{H_\gamma},\ \ \ \ t \in\,[0,T],\]
which implies \eqref{f65}, with $c_T=e^{c T}$.

\end{proof}

\section{The limiting result for the linear deterministic problem}
\label{sec4}

For every $\e>0$, we consider the   linear parabolic Cauchy problem associated with the second order differential operator $\mathcal{L}_\e$
\begin{equation}
\label{fluid15}
\le\{\begin{array}{l}
\ds{\partial_t v_\e(t,x)=\mathcal{L}_\e v_\e(t,x),\ \ \ \ \ t>0,\ \ \ x \in\,\mathbb{R}^2,}\\
\vs
\ds{v_\e(0,x)=g(x),\ \ \ \ x \in\,\mathbb{R}^2.}
\end{array}\r.
\end{equation}

The solution of problem \eqref{fluid15} has a probabilistic representation in terms of the Markov transition semigroup $S_\e(t)$ associated with equation \eqref{ham-epsilon-bis}, that, we recall, is defined for any bounded Borel function $\varphi:\mathbb{R}^2\to \mathbb{R}$ by
 \[S_\e(t) \varphi(x)=\mathbb{E}_{\,x}\, \varphi(X_\e(t)),\ \ \ \ x \in\,\mathbb{R}^2.\]
 Actually, as a consequence of It\^o's formula, if the initial condition $\varphi$ is taken in $C^2_b(\mathbb{R}^2)$, then
\begin{equation}
\label{fluid16}
\frac{d}{dt}[S_\e(\cdot)\varphi(x)](t)=\mathcal{L}_\e\le(S_\e(t)\varphi\r)(x),\ \ \ \ t\geq 0,\ \ \ x \in\,\mathbb{R}^2.
\end{equation}
Moreover, as the Hamiltonian $H$ is assumed to be of class $C^4(\mathbb{R})$, with bounded second derivative, we have that the semigroup $S_\e(t)$ has a smoothing effect, namely it  maps Borel bounded functions into $C^3_b(\mathbb{R}^2)$, for any $t>0$. Thanks to the semigroup law, this allows to  conclude that \eqref{fluid16} is satisfied on $\mathbb{R}^2$, for any Borel bounded  function $\varphi$ and for all $t>0$.

In the present section, we assume that the Hamiltonian $H$, in addition to all conditions in Hypothesis \ref{H1}, satisfies also the following condition.

\begin{Hypothesis}
\label{H3}
It holds
\begin{equation}
\label{fluid18}
\frac{d T_k(z)}{dz}\neq 0,\ \ \ (z,k) \in\,\Gamma.
\end{equation}
\end{Hypothesis}

\begin{Remark}
{\em Condition \eqref{fluid18} rules out the case $H(x)=|x|^2$. This means that with our method we cannot treat the  case $\bar{\nabla} H(x)$ is linear in order to prove the main result stated below in \eqref{fluid17}. 
}
\end{Remark}

Our purpose here is to study the asymptotic behavior of $S_\e(t)\varphi(x)$, and hence of $v_\e(t,x)$, as $\e\downarrow 0$. As a consequence of \eqref{limite-fund}, if the Hamiltonian $H$ satisfies Hypothesis \ref{H1}, then for any continuous mapping $\psi:\Gamma\to\mathbb{R}$ we have
\begin{equation}
\label{fluid25}
\lim_{\e\to 0} S_\e(t) \psi^\vee(x)=\bar{S}(t)\psi(\Pi(x)),\ \ \ \ t\geq 0,\ \ \ x \in\,\mathbb{R}^2.
\end{equation}
As a first thing, we are going to prove that the limit above is uniform with respect to $t \in\,[0,T]$, for every fixed $T>0$. To this purpose, we introduce some notation.

For any $\eta>0$ we  can fix 
\begin{equation}
\label{f33}
z_\eta\geq \max_{i=1,\ldots,n} H(x_i)+1,\end{equation}
(we recall $x_1,\ldots,x_n$ are the critical points of the Hamiltonian $H$), such that, for all $\e>0$ sufficiently small,
\begin{equation}
\label{f31}
\mathbb{P}_x \le(\rho_{\e,\eta}\leq T\r)\leq \eta,\ \ \ \ \ \ \bar{\mathbb{P}}_{\Pi(x)} \le(\rho_\eta\leq T\r)\leq \eta, \end{equation}
 where  
\begin{equation}
\label{f41}
\rho_{\e,\eta}:=\inf\,\{t\geq 0\,:\,H(X_\e^x(t))\geq z_\eta\},\ \ \ \ \ \rho_\eta:=\inf\,\{t\geq 0\,:\,\Pi_1\bar{Y}(t)\geq z_\eta\},
\end{equation}
(here we have  denoted $\pi_1(z,k)=z$). Actually, as proved in \cite[Lemma 3.2]{fw12} the family
$\{\Pi(X_\e(\cdot))\}_{\e \in\,(0, \e_0)}$ is tight in $C([0,T];\mathbb{R})$. This means that there exists some $z_\eta>0$ as in \eqref{f33} such that
\[\mathbb{P}_x\le(\sup_{t\leq T}H(X_\e(t))\geq z_\eta\r)\leq \eta,\]
and \eqref{f31} follows.
 
\begin{Proposition}
For every $T>0$ and $\psi \in\,C_b(\Gamma)$, we have
\begin{equation}
\label{f220}
\lim_{\e\to 0}\sup_{t \in\,[0,T]}\,\le|\mathbb{E}_x\psi^\vee(X_\e(t))-\bar{\mathbb{E}}_{\Pi(x)} \psi(\bar{Y}(t))\r|=0.
\end{equation}
\end{Proposition}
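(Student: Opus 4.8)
The plan is to introduce
\[
f_\e(t):=S_\e(t)\psi^\vee(x)=\mathbb{E}_x\,\psi(\Pi(X_\e(t))),\qquad
f(t):=\bar{S}(t)\psi(\Pi(x))=\bar{\mathbb{E}}_{\Pi(x)}\,\psi(\bar{Y}(t)),
\]
so that the quantity in \eqref{f220} is $\sup_{t\in[0,T]}|f_\e(t)-f(t)|$ and the assertion is that the pointwise limit \eqref{fluid25} is uniform in $t$. I would deduce this from two facts: (i) $f_\e(t)\to f(t)$ for each fixed $t$; and (ii) the equicontinuity on $[0,T]$ of the family $\{f_\e\}_{0<\e<\e_1}$ for some $\e_1>0$. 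Granting (i) and (ii), the limit $f$ is automatically continuous and the uniform convergence follows from the standard compactness argument: given $\sigma>0$, use (ii) to choose a partition $0=t_0<\dots<t_N=T$ fine enough that $\sup_\e|f_\e(t)-f_\e(t_i)|<\sigma$ and $|f(t)-f(t_i)|<\sigma$ whenever $t$ and $t_i$ lie in the same subinterval, then pick $\e_0$ with $\max_i|f_\e(t_i)-f(t_i)|<\sigma$ for $\e<\e_0$, and conclude $\sup_t|f_\e(t)-f(t)|<3\sigma$.

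Fact (i) is immediate from the weak convergence already recorded in \eqref{limite-fund}: since $\psi\in C_b(\Gamma)$, the functional $F_t:w\mapsto\psi(w(t))$ is bounded and continuous on $C([0,T];\Gamma)$, hence $f_\e(t)=\mathbb{E}_x\,F_t(\Pi(X_\e(\cdot)))\to\bar{\mathbb{E}}_{\Pi(x)}\,F_t(\bar{Y}(\cdot))=f(t)$, which is precisely \eqref{fluid25}. Continuity of $f$ follows in the same way: $\bar{Y}$ has continuous paths, so $\psi(\bar{Y}(t_n))\to\psi(\bar{Y}(t))$ almost surely when $t_n\to t$, and dominated convergence with dominant $\|\psi\|_\infty$ gives $f(t_n)\to f(t)$.

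Fact (ii) is the substantial point, and I expect it to be the main obstacle, since $\psi$ is only bounded continuous (not uniformly continuous on $\Gamma$) and the behaviour of $\Pi(X_\e)$ near the vertices and near $O_\infty$ is delicate; this is where the stopping times $\rho_{\e,\eta}$ enter. Fix $\eta>0$ and take $z_\eta$ as in \eqref{f33}--\eqref{f31}, so that $\mathbb{P}_x(\rho_{\e,\eta}\leq T)\leq\eta$ for all $\e<\e_1$. On $\{\rho_{\e,\eta}>T\}$ the path $\Pi(X_\e(\cdot))$ stays on $[0,T]$ in the \emph{compact} set $K_\eta:=\{(z,k)\in\Gamma:\ z\leq z_\eta\}$, equivalently $X_\e(\cdot)$ remains in the bounded set $B_\eta:=\{H\leq z_\eta\}\subset\mathbb{R}^2$. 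On this event two things hold. First, $\psi$ is uniformly continuous on $K_\eta$ with some modulus $\omega_\eta$, and because $\Gamma$ has finitely many vertices whose $H$-values are pairwise distinct (Hypothesis \ref{H1}) one has $d\big(\Pi(X_\e(t)),\Pi(X_\e(s))\big)\leq C\,\sup_{s\leq u\leq t}|H(X_\e(u))-H(X_\e(s))|$, i.e.\ short $H$-oscillations force small $d$-oscillations of $\Pi(X_\e(\cdot))$. Second, on $B_\eta$ the fields $\nabla H$ and $\Delta H$ are bounded and the fast advection does not affect $H$ because $\langle\bar{\nabla}H,\nabla H\rangle\equiv0$; so It\^o's formula applied to $H(X_\e(\cdot\wedge\rho_{\e,\eta}))$ together with Doob's inequality gives
\[
\mathbb{E}_x\sup_{s\leq u\leq t}\big|H(X_\e(u\wedge\rho_{\e,\eta}))-H(X_\e(s\wedge\rho_{\e,\eta}))\big|^2\leq C_\eta\,|t-s|,\qquad s,t\in[0,T].
\]
Combining these with Chebyshev's inequality, with the modulus $\omega_\eta$, and with $|\psi|\leq\|\psi\|_\infty$ on the complementary event of probability $\leq\eta$, one obtains, for all $s,t\in[0,T]$ and all $a>0$,
\[
|f_\e(t)-f_\e(s)|\ \leq\ \omega_\eta(Ca)\ +\ 2\|\psi\|_\infty\Big(\tfrac{C_\eta|t-s|}{a^{2}}+\eta\Big),\qquad 0<\e<\e_1.
\]
Letting $|t-s|\to0$, then $a\to0$, then $\eta\to0$ yields the equicontinuity of $\{f_\e\}_{0<\e<\e_1}$. (One could instead bypass the It\^o estimate entirely: \eqref{limite-fund} and Prokhorov's theorem make the laws of $\Pi(X_\e(\cdot))$, $0<\e<\e_1$, tight in the Polish space $C([0,T];\Gamma)$, and by Arzel\`a--Ascoli this already furnishes, for each $\eta$, a compact $\mathcal{K}\subset C([0,T];\Gamma)$ with $\mathbb{P}_x(\Pi(X_\e(\cdot))\in\mathcal{K})\geq1-\eta$ on which the paths share a modulus of continuity and take values in a fixed compact subset of $\Gamma$, which is exactly what (ii) needs.)

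With (i), (ii) and the continuity of $f$ in hand, \eqref{f220} follows by the compactness argument sketched in the first paragraph. The only genuinely technical points are the $d$-versus-$H$ comparison near the vertices and the verification that the localization through $\rho_{\e,\eta}$ faithfully transfers the bounded-region estimates to the graph; everything else is either the weak convergence \eqref{limite-fund} or routine.
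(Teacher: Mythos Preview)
Your proof is correct and follows essentially the same strategy as the paper: upgrade the pointwise limit \eqref{fluid25} to a uniform one by establishing equicontinuity of $\{f_\e\}$ on $[0,T]$, after localizing via $\rho_{\e,\eta}$ so that $\psi$ may be taken uniformly continuous. The paper obtains equicontinuity directly from the tightness of $\{\Pi(X_\e(\cdot))\}$ in $C([0,T];\Gamma)$---exactly your parenthetical alternative---while your primary route through It\^o's formula for $H(X_\e(\cdot))$ makes the modulus-of-continuity estimate explicit at the modest cost of the $d$-versus-$H$ comparison you flag.
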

\begin{proof}
For every $\e>0$, let us define
\[f_\e(t):=\mathbb{E}_x\psi^\vee(X_\e(t))-\bar{\mathbb{E}}_{\Pi(x)} \psi(\bar{Y}(t)).\]
For every fixed $t\geq 0$, we have  
\begin{equation}
\label{f202}
\lim_{\e\to 0} f_\e(t)=0.
\end{equation}
If we prove that the family of functions $\{f_\e\}_{\e>0}$ is equibounded and equicontinuous in $C([0,T])$,  by the Ascoli-Arzel\`a Theorem we have that the limit in \eqref{f202} is uniform.

Now, the equiboundedness of $\{f_\e\}_{\e>0}$ follows from the fact that $\psi$ is bounded. In order to prove the equicontinuity of  $\{f_\e\}_{\e>0}$, first of all we notice that we may assume that both $\psi$ and $\psi^\vee$ are uniformly continuous. Actually, due to \eqref{f31} for every $\eta>0$ we have
\[\begin{array}{l}
\ds{\le|\mathbb{E}_x\,\psi^\vee(X_\e(t))-\bar{\mathbb{E}}_{\Pi(x)}\psi(\bar{Y}(t))\r|\leq \le|\mathbb{E}_x\,\le(\psi^\vee(X_\e(t))\,;\,\rho_{\e,\eta}\leq T\r)\r|+\le|\bar{\mathbb{E}}_{\Pi(x)}\le(\psi(\bar{Y}(t))\,;\,\rho_\eta\leq T\r)\r|}\\
\vs
\ds{+\le|\mathbb{E}_x\,\le(\psi^\vee(X_\e(t))\,;\,\rho_{\e,\eta}> T\r)-\bar{\mathbb{E}}_{\Pi(x)}\le(\psi(\bar{Y}(t))\,;\,\rho_\eta> T\r)\r|   }\\
\vs
\ds{\leq 2\,\|\psi\|_\infty \,\eta+\le|\mathbb{E}_x\,\le(\psi^\vee(X_\e(t))\,;\,\rho_{\e,\eta}> T\r)-\bar{\mathbb{E}}_{\Pi(x)}\le(\psi(\bar{Y}(t))\,;\,\rho_\eta> T\r)\r| . }
\end{array}\]
Therefore, as $\psi^\vee$ is uniformly continuous on $\{H(x)\leq z_\eta\}$ and $\psi$ is uniformly continuous on $\{z\leq z_\eta\}$, due to the arbitrariness of $\eta$ we can conclude.

Now, if $\psi$ are uniformly continuous, for every $\eta>0$ there exists $\d_\eta>0$ such that
\begin{equation}
\label{f205}
\begin{array}{l}
\ds{|\Pi(x_1)-\Pi(x_2)<\d_\eta,\ \ \ \text{dist}\le((z_1,k_1), (z_2,k_2)\r)<\d_\eta}\\
\vs
\ds{\Longrightarrow |\psi^\vee(x_1)-\psi^\vee(x_2)|+|\psi(z_1,k_1)-\psi(z_2,k_2)|<\frac \eta 2.}
\end{array}
\end{equation}
Moreover, as $\{\Pi(X_\e)\}_{\e>0}\subset C([0,T];\Gamma)$ is tight, there exists a compact set $K_\eta \subset C([0,T];\Gamma)$, such that 
\begin{equation}
\label{f206}
\mathbb{P}_x\le(\Pi(X_\e) \in\,K_\eta^c\r)+\bar{\mathbb{P}}_{\Pi(x)}\le(\bar{Y} \in\,K_\eta^c\r)\leq \frac \eta{4\|\psi\|_\infty}.
\end{equation}
Since functions in $K_\eta$ are equicontinuous, there exists $\theta_\eta>0$ such that on $K_\eta$
\[|t-s|<\theta_\eta\Longrightarrow |\Pi(X_\e(t))-\Pi(X_\e(s))|<\d_\eta,\ \ \ \ |\bar{Y}(t)-\bar{Y}(s)|<\d_\eta.\]
Therefore, thanks to \eqref{f205} and \eqref{f206}, for every $t,s \in\,[0,T]$ such that $|t-s|<\theta_\eta$
\[\begin{array}{l}
\ds{|f_\e(t)-f_\e(s)|}\\
\vs
\ds{\leq \le|\mathbb{E}_x\le(\psi^\vee(X_\e(t))-\psi^\vee(X_\e(s))\,;\,\Pi(X_\e) \in\,K_\eta\r)\r|+\le|\bar{\mathbb{E}}_{\Pi(x)}\le(\psi(\bar{Y}(t))-\psi(\bar{Y}(s))\,;\,\bar{Y} \in\,K_\eta\r)\r|}\\
\vs
\ds{+2\,\|\psi\|_\infty\le(\mathbb{P}_x\le(\Pi(X_\e) \in\,K_\eta^c\r)+\bar{\mathbb{P}}_{\Pi(x)}\le(\bar{Y} \in\,K_\eta^c\r)\r)\leq \frac \eta 2+2\,\|\psi\|_\infty\frac \eta{4\|\psi\|_\infty}=\eta.}
\end{array}\]

\end{proof}

In what follows, we want to show that, in fact,  under suitable conditions,  limit \eqref{f220}  is also true if $\psi^\vee$ is replaced by $u:\mathbb{R}^2\to \reals$ and $\psi$ is replaced by $u^\wedge$. Namely, we want to prove  the following result.

\begin{Theorem}
\label{fluid-main}
Assume that the Hamiltonian $H$ satisfies all conditions in Hypotheses \ref{H1} and \ref{H3}. Then, for any $u \in\,C_b(\mathbb{R}^2)$ and $x \in\,\mathbb{R}^2$, and for any $0<\tau\leq T$, we have
\begin{equation}
\label{fluid17}
\lim_{\e\to 0} \sup_{t \in\,[\tau,T]}\le|S_\e(t)u(x)-\bar{S}(t)^\vee u(x)\r|=\lim_{\e\to 0} \sup_{t \in\,[\tau,T]}\le|\mathbb{E}_x u(X_\e(t))-\bar{\mathbb{E}}_{\Pi(x)} u^\wedge (\bar{Y}(t))\r|=0.
\end{equation}

\end{Theorem}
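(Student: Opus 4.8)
\emph{Strategy and reduction.} Write $\psi := u - u^\wedge\circ\Pi$; this is a bounded Borel function with $\|\psi\|_\infty\le 2\|u\|_\infty$. By the triangle inequality it suffices to prove the two limits, uniformly on $[\tau,T]$,
\[\lim_{\e\to0}\sup_{t\in[\tau,T]}\left|\mathbb{E}_x\psi(X_\e(t))\right|=0,\qquad \lim_{\e\to0}\sup_{t\in[\tau,T]}\left|\mathbb{E}_x u^\wedge(\Pi(X_\e(t)))-\bar{\mathbb{E}}_{\Pi(x)}u^\wedge(\bar{Y}(t))\right|=0.\]
Also, we may assume $u\in C^\infty_b(\mathbb{R}^2)$: given $\eta>0$, fix $z_\eta$ as in \eqref{f33}; on the compact set $\{H\le z_\eta\}$ the function $u$ is uniformly continuous, hence can be approximated there uniformly, and with control of the sup norm, by a smooth function; since $S_\e(t)$ and $\bar{S}(t)$ are sup-norm contractions, $u\mapsto u^\wedge$ is a sup-norm contraction, and by \eqref{f31} the processes $X_\e$ and $\bar{Y}$ leave $\{z\le z_\eta\}$ before time $T$ with probability at most $\eta$, the replacement costs only $O((1+\|u\|_\infty)\eta)$ uniformly in $\e$ and $t$.

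\emph{The limit on the graph.} I claim $u^\wedge$ extends to a function in $C_b(\Gamma)$. It is bounded by $\|u\|_\infty$, and continuous on the interior of each edge by \cite[Lemma~8.1.1]{fw12}, as in Lemma \ref{lf51}. At an external vertex $O_i$, associated with a local extremum $x_j$, $\mathrm{diam}\,C_k(z)\to0$ as $(z,k)\to O_i$, so $u^\wedge(z,k)=\oint_{C_k(z)}u\,d\mu_{z,k}\to u(x_j)$. At an internal vertex $O_i$, associated with a saddle $x_j$, the blow-up of $T_k(z)$ in \eqref{fluid5} comes entirely from an arbitrarily small neighbourhood of $x_j$ (where $|\nabla H|\to0$); hence $\mu_{z,k}(B_r(x_j))\to1$ as $(z,k)\to O_i$ for each fixed $r>0$, so $u^\wedge(z,k)\to u(x_j)$ along all three edges abutting $O_i$. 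No continuity at $O_\infty$ is needed: by \eqref{f33} the function $\tilde\psi(z,k):=u^\wedge((z\wedge z_\eta),k)$ (with $z\wedge z_\eta=z$ on every bounded edge) belongs to $C_b(\Gamma)$ and coincides with $u^\wedge$ on $\{z\le z_\eta\}$, and \eqref{f31} shows
\[\sup_{t\in[0,T]}\left|\mathbb{E}_x(u^\wedge-\tilde\psi)(\Pi(X_\e(t)))\right|+\sup_{t\in[0,T]}\left|\bar{\mathbb{E}}_{\Pi(x)}(u^\wedge-\tilde\psi)(\bar{Y}(t))\right|\le 4\|u\|_\infty\eta,\]
so the second limit follows by applying \eqref{f220} to $\tilde\psi$ and letting $\eta\downarrow0$.

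\emph{The averaging limit, in time-averaged form.} Fix $\delta\in(0,1)$ and pick $g\in C^2([0,+\infty);[0,1])$ vanishing on $\bigcup_i(H(x_i)-\delta,H(x_i)+\delta)$ and on $[z_\eta,+\infty)$, and equal to $1$ outside the corresponding $2\delta$-neighbourhoods and outside $[\tfrac12 z_\eta,+\infty)$; set $\zeta:=g\circ H\in C^2(\mathbb{R}^2)$. Because $H$ is a first integral of the flow, $\langle\bar{\nabla}H,\nabla\zeta\rangle=g'(H)\langle\bar{\nabla}H,\nabla H\rangle=0$. On $G(\pm\delta)^c$ let $\chi$ be the corrector, the unique solution of $\langle\bar{\nabla}H,\nabla\chi\rangle=\psi$ with $\oint_{C_k(z)}\chi\,d\mu_{z,k}=0$: it is well defined since $\oint_{C_k(z)}\psi\,d\mu_{z,k}=0$ by the definition of $u^\wedge$; it is of class $C^2$ on $G(\pm\delta)^c$ because $u$ is smooth (using \eqref{f50}, and Hypothesis \ref{H3} for the regularity of the action--angle variables); and $|\chi|\le 2\|u\|_\infty T_k(z)$ there. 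Then $\chi_\delta:=\zeta\,\chi$, extended by $0$, lies in $C^2_b(\mathbb{R}^2)$, its support being a compact subset of $G(\pm\delta)^c$ on which $\chi$ and its derivatives are bounded; moreover $\langle\bar{\nabla}H,\nabla\chi_\delta\rangle=\zeta\psi=\psi-R_\delta$, where $R_\delta:=(1-\zeta)\psi$ obeys $\|R_\delta\|_\infty\le2\|u\|_\infty$ and vanishes outside $G(\pm2\delta)\cup\{H\ge\tfrac12 z_\eta\}$. Set $C(\delta,\eta):=\|\chi_\delta\|_\infty+\|\Delta\chi_\delta\|_\infty<\infty$. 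Dynkin's formula applied to $\e\chi_\delta$ and $X_\e$, together with $\mathcal{L}_\e\chi_\delta=\tfrac12\Delta\chi_\delta+\tfrac1\e\langle\bar{\nabla}H,\nabla\chi_\delta\rangle$, yields
\[\int_0^t\mathbb{E}_x\psi(X_\e(r))\,dr=\e\left(\mathbb{E}_x\chi_\delta(X_\e(t))-\chi_\delta(x)\right)-\frac\e2\int_0^t\mathbb{E}_x\Delta\chi_\delta(X_\e(r))\,dr+\int_0^t\mathbb{E}_x R_\delta(X_\e(r))\,dr,\]
and therefore, bounding the contribution of $\{H\ge\tfrac12 z_\eta\}$ through \eqref{f31},
\[\sup_{t\in[0,T]}\left|\int_0^t\mathbb{E}_x\psi(X_\e(r))\,dr\right|\le\left(2+\tfrac T2\right)\e\,C(\delta,\eta)+2\|u\|_\infty\int_0^T\mathbb{P}_x\left(X_\e(r)\in G(\pm2\delta)\right)dr+2\|u\|_\infty T\eta.\]
Since $G(\pm2\delta)=\Pi^{-1}(\Pi(G(\pm2\delta)))$ and $\Pi(X_\e)$ converges in distribution to $\bar{Y}$ in $C([0,T];\Gamma)$ by \eqref{limite-fund}, the middle term is bounded, via the Portmanteau theorem and the fact (\cite[Ch.~8]{fw12}) that $\bar{Y}$ assigns occupation time tending to $0$ as $\delta\downarrow0$ to the finite family of vertices, by $2\|u\|_\infty\,\omega(\delta)$ with $\omega(\delta)\to0$. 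Letting $\e\to0$, then $\delta\to0$, then $\eta\to0$, we obtain $\lim_{\e\to0}\sup_{t\in[0,T]}\bigl|\int_0^t\mathbb{E}_x\psi(X_\e(r))\,dr\bigr|=0$.

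\emph{From the time average to the pointwise bound, and the main difficulty.} For $0<h\le\tau$ the Markov property at time $t-h$ and \eqref{f31} give
\[\sup_{t\in[\tau,T]}\left|\mathbb{E}_x\psi(X_\e(t))\right|\le\sup_{\{H(y)\le z_\eta\}}\left|\mathbb{E}_y\psi(X_\e(h))\right|+2\|u\|_\infty\eta,\]
so the first limit of the reduction step will follow once we show that, for each fixed $h>0$, $\sup_{\{H(y)\le z_\eta\}}|\mathbb{E}_y\psi(X_\e(h))|\to0$ as $\e\to0$: a uniform-in-starting-point averaging statement over a fixed positive time span. This is where one separates the region near the critical points of $H$ from the region away from them, and inserts a sequence of stopping times that move between the two. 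If the path $X_\e|_{[0,h]}$ stays in $G(\pm\delta)^c$, then $\mathbb{E}_y\psi(X_\e(h))\to0$ uniformly over $y$ in the compact set $G(\pm2\delta)^c\cap\{H\le z_\eta\}$, by the averaging principle in action--angle coordinates away from the critical points --- the estimate which, in \cite{f02}, directly gave the theorem under the support hypothesis. For a general starting point one chops $[0,h]$ at the successive entrance times of $X_\e$ to $G(\pm\delta)$ and exit times from $G(\pm2\delta)$; on the excursions inside $G(\pm2\delta)$ one bounds $\psi(X_\e)$ by $2\|u\|_\infty$, on the excursions outside it one applies the strong Markov property and the ``away'' averaging, and the two contributions are balanced using that, as $\e\to0$, the expected time $X_\e$ spends in $G(\pm2\delta)$ before $h$ is bounded --- uniformly in the starting point --- by a quantity $\omega_h(\delta)\to0$, the same mechanism as in the previous step. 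Letting $\delta\to0$, then $h\to0$, then $\eta\to0$ concludes the proof. The real obstacle is precisely this last step: the corrector $\chi$ is unbounded (logarithmically, by \eqref{fluid5}) near the internal vertices, and the slow motion $\Pi(X_\e)$ is genuinely attracted to them, so one cannot simply localise away from the saddle points; the stopping-time construction and the quantitative bound on the time $X_\e$ spends near the critical points are exactly what make the statement true for an arbitrary $u\in C_b(\mathbb{R}^2)$, as opposed to the class of $\varphi$ allowed in \cite{f02}.
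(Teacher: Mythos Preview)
Your observation that $u^\wedge$ extends continuously to the interior vertices is correct and genuinely simplifies the paper's Lemma~\ref{fluid23}: since $T_k(z)\to\infty$ as $(z,k)\to O_i$ while the contribution to $\oint_{C_k(z)}|\nabla H|^{-1}\,dl_{z,k}$ from outside any fixed ball $B_r(x_j)$ stays bounded, the measures $\mu_{z,k}$ concentrate at the saddle $x_j$ and $u^\wedge(z,k)\to u(x_j)$ along every abutting edge. The paper treats $u^\wedge$ as potentially discontinuous there and introduces a stopping-time decomposition to cope; your route to the second limit is shorter and cleaner.

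The first limit, however, has a real gap. Step~3 correctly proves the time-integrated bound $\sup_{t\le T}\bigl|\int_0^t\mathbb{E}_x\psi(X_\e(r))\,dr\bigr|\to0$ via a corrector, but this result is never used: Step~4 reduces, via Markov at time $t-h$, to showing $\sup_{\{H(y)\le z_\eta\}}|\mathbb{E}_y\psi(X_\e(h))|\to0$ for a fixed $h>0$, which is the original problem with an added uniformity in the starting point, not a consequence of Step~3. More seriously, your sketch invokes ``the `away' averaging'' at the macroscopic time $h$, but the available averaging estimate (the paper's \eqref{fluid20}, taken from \cite{f02}) holds at the mesoscopic time $\e^\alpha$, $\alpha\in(4/7,2/3)$; at time $h$ the slow variable has diffused by $O(\sqrt h)$ and one cannot expect $\mathbb{E}_y u(X_\e(h))\approx u^\wedge(\Pi(y))$. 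Your appeal to ``the expected time $X_\e$ spends in $G(\pm2\delta)$'' likewise controls a time integral, not the probability that the single instant $h$ lands in a near-vertex excursion.

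The paper closes this gap by conditioning on the location of $X_\e(t-\e^\alpha)$ rather than on excursion structure over $[0,h]$: it decomposes according to whether $t-\e^\alpha$ lies in an interval $[\tau_n,\sigma_n)$ (near a vertex) or $[\sigma_n,\tau_{n+1})$ (away). On the former the trivial bound is used, and the total probability is controlled via $\sum_n\mathbb{E}_x e^{-\tau_n}\le c/\delta$ and $\sup_y\mathbb{E}_y\sigma_0\le c\,\delta^2|\log\delta|$ (estimates \eqref{f46} and \eqref{f43}), giving $O(\delta|\log\delta|)$. On the latter one applies strong Markov at $\sigma_n$, then ordinary Markov at $t-\e^\alpha$, then the $\e^\alpha$-time averaging \eqref{fluid20}, plus a short-time continuity estimate for $u^\wedge\circ\Pi$ between $t-\e^\alpha$ and $t$. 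The mesoscopic time scale $\e^\alpha$ is essential here and is exactly what your Step~4 is missing.
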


\subsection{A preliminary result}

 Before proceeding with the proof of Theorem \ref{fluid-main} and of all required preliminary results, we introduce some notations. 
For every $\e, \eta>0$  and $0<\d^\prime<\delta$, by using the notations introduced in Subsection \ref{ss2.2} we define 
\begin{equation}
\label{fluid21}
\begin{array}{l}
\ds{\si_n^{\e,\eta,\d,\d^\prime}=\min\,\le\{t\geq  \tau_n^{\e,\eta,\d,\d^\prime}\ :\ X_\e(t) \in\,G(\pm \d)^c\r\},}\\
\vs
\ds{\tau_n^{\e,\eta,\d,\d^\prime}=\min\,\le\{t\geq  \si_{n-1}^{\e,\eta,\d,\d^\prime}\ :\ X_\e(t) \in\,D(\pm \d^\prime)\cup C(z_\eta)\r\},}
\end{array}
\end{equation}
with $\tau_0^{\e,\eta,\d,\d^\prime}=0$ and $z_\eta$ defined as in \eqref{f33}.
Clearly, after the process $X_\e(t)$ reaches $C(z_\eta)$, all $\tau_n^{\e,\eta,\d,\d^\prime}$ and $\si_n^{\e,\eta,\d,\d^\prime}$ coincide with the stopping time $\rho_{\e,\eta}$ introduced in \eqref{f41},
and for any $n \in\,\nat$
\begin{equation}
\label{f30}
X_\e(\si_n^{\e,\eta,\d,\d^\prime}) \in\,D(\pm \d)\cup C(z_\eta),\ \ \ \ \ X_\e(\tau_n^{\e,\eta,\d,\d^\prime}) \in\,D(\pm \d^\prime)\cup C(z_\eta).
\end{equation}
Moreover, if $X_\e(0) \in\,G(\pm \d)^c$, we have that $\si_0^{\e,\eta,\d,\d^\prime}=0$ and $\tau_1^{\e,\eta,\d,\d^\prime}$ is the first time the process $X_\e$ touches $D(\pm \d^\prime)\cup C(z_\eta)$. In particular, if $X_\e(0)\geq z_\eta$, then $\tau_1^{\e,\eta,\d,\d^\prime}$ is the first time the process $X_\e$ touches $C(z_\eta)$ and all successive stopping times coincide with $\rho_{\e,\eta}$.

\begin{Lemma}
\label{fluid23}
Assume that the same assumptions of Theorem \ref{fluid-main} are verified. Then, for every $u \in\,C_b(\mathbb{R}^2)$ and $x \in\,\mathbb{R}^2$, and for every $0<\tau<T$
\begin{equation}
\label{fluid24}
\lim_{\e\to 0}\,\sup_{t \in\,[\tau,T]}\le|\mathbb{E}_x\,(u^\wedge)^\vee(X_\e(t))-\bar{\mathbb{E}}_{\Pi(x)}\,u^\wedge (\bar{Y}(t))\r|=0.
\end{equation}

\end{Lemma}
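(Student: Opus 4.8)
The plan is to derive \eqref{fluid24} from the uniform limit \eqref{f220}, already available for functions in $C_b(\Gamma)$, after replacing $u^\wedge$ by such a function on the part of the graph that the processes actually visit. Fix $\eta>0$ and let $z_\eta$ be as in \eqref{f33}, so that \eqref{f31} holds for all $\e$ small. Since $z_\eta>\max_i H(x_i)$, the unbounded edge is the only one of $\Gamma$ reaching above the level $z_\eta$; hence, once we know that $u^\wedge$ is continuous on the compact subgraph $\Gamma_\eta:=\{(z,k)\in\Gamma:\ z\leq z_\eta\}$, we may extend it to some $\psi_\eta\in C_b(\Gamma)$ with $\psi_\eta\equiv u^\wedge$ on $\Gamma_\eta$ and $\|\psi_\eta\|_\infty\leq\|u\|_\infty$ (freeze $u^\wedge$ at its value at the cut point along the unbounded edge for $z>z_\eta$). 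Since $\Pi^{-1}(\Gamma_\eta)=\{x:\ H(x)\leq z_\eta\}$, the functions $(u^\wedge)^\vee$ and $\psi_\eta^\vee$ coincide on $\{H\leq z_\eta\}$, so from \eqref{f31}
\[\sup_{t\in[0,T]}\left|\mathbb{E}_x\left[(u^\wedge)^\vee(X_\e(t))-\psi_\eta^\vee(X_\e(t))\right]\right|\leq 2\|u\|_\infty\,\mathbb{P}_x(\rho_{\e,\eta}\leq T)\leq 2\|u\|_\infty\,\eta,\]
and similarly $\sup_{t\in[0,T]}|\bar{\mathbb{E}}_{\Pi(x)}[u^\wedge(\bar Y(t))-\psi_\eta(\bar Y(t))]|\leq 2\|u\|_\infty\,\eta$, using the second bound in \eqref{f31}, for all $\e$ small. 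Applying \eqref{f220} with $\psi=\psi_\eta$ and letting $\e\to0$ gives $\limsup_{\e\to0}\sup_{t\in[\tau,T]}|\mathbb{E}_x(u^\wedge)^\vee(X_\e(t))-\bar{\mathbb{E}}_{\Pi(x)}u^\wedge(\bar Y(t))|\leq 4\|u\|_\infty\,\eta$, and the arbitrariness of $\eta$ yields \eqref{fluid24} (in fact on all of $[0,T]$; the restriction $\tau>0$ is only needed for the averaging step in Theorem \ref{fluid-main}).

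It remains to prove that $u^\wedge$ is continuous on $\Gamma_\eta$. On the interior of each edge this follows, with only $u\in C_b(\mathbb{R}^2)$ in place of $u\in C^1$, from the argument behind Lemma \ref{lf51}: for $z$ in any subinterval of $\mathring I_k$ free of critical values the level curves $C_k(z)$ vary smoothly, $|\nabla H|$ is bounded and bounded away from zero near them, so both $z\mapsto\oint_{C_k(z)}u\,|\nabla H|^{-1}\,dl_{z,k}$ and $z\mapsto T_k(z)$ are continuous with $T_k>0$, whence $u^\wedge(\cdot,k)$ is continuous. At an external vertex $O_i=(H(x_j),k)$ the curves $C_k(z)$ shrink to $x_j$ as $z\to H(x_j)$ while, by \eqref{fluid4}, $T_k(z)$ stays bounded away from zero; hence $u^\wedge(z,k)=\oint_{C_k(z)}u\,d\mu_{z,k}\to u(x_j)$.

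The delicate point — the one that, unlike in \cite{f02}, genuinely forces us to face the vertices — is the continuity at an internal vertex $O_i=(H(x_j),k_{i_1})=(H(x_j),k_{i_2})=(H(x_j),k_{i_3})$ with $x_j$ a saddle. For each incident edge $I_{k_{i_l}}$, by \eqref{fluid5} one has $T_{k_{i_l}}(z)\sim\mathrm{const}\,|\log|z-H(x_j)||\to+\infty$ as $(z,k_{i_l})\to O_i$; moreover, since the critical values of $H$ are distinct, for $z$ close to $H(x_j)$ the curve $C_{k_{i_l}}(z)$ stays bounded away from all other critical points, so this divergence comes solely from the neighbourhood of $x_j$, where $|\nabla H|$ is small. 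Writing
\[u^\wedge(z,k_{i_l})-u(x_j)=\frac{1}{T_{k_{i_l}}(z)}\oint_{C_{k_{i_l}}(z)}\frac{u(x)-u(x_j)}{|\nabla H(x)|}\,dl_{z,k},\]
splitting the contour integral into its part over a small ball $B_\rho(x_j)$ and the rest, bounding the first by $\big(\sup_{B_\rho(x_j)}|u-u(x_j)|\big)\,T_{k_{i_l}}(z)$ and the second by a constant depending on $\rho$ (the length of $C_{k_{i_l}}(z)$ being uniformly bounded near the critical value and $|\nabla H|^{-1}$ bounded off $B_\rho(x_j)$), one finds that $\limsup_{(z,k_{i_l})\to O_i}|u^\wedge(z,k_{i_l})-u(x_j)|$ is at most $\sup_{B_\rho(x_j)}|u-u(x_j)|$ for every $\rho>0$; hence $u^\wedge(z,k_{i_l})\to u(x_j)$ along each of the three edges, and $u^\wedge$ extends continuously to $O_i$ with the common value $u(x_j)$. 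Together with the previous paragraph this gives $u^\wedge\in C(\Gamma_\eta)$ and finishes the proof. The real obstacle of the argument is concentrated here: it is precisely the logarithmic blow-up \eqref{fluid5} of the period at the non-degenerate saddles that makes the averages $u^\wedge(z,k)$ lose memory of the edge and converge, from all sides, to the value of $u$ at the saddle.
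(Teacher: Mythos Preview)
Your proof is correct and takes a genuinely different, more direct route than the paper's. The paper explicitly asserts that ``because of the presence of the interior vertices, even if $u$ is continuous on $\mathbb{R}^2$ we cannot conclude that $u^\wedge$ is continuous on $\Gamma$, in general,'' and therefore approximates $u^\wedge$ by a function $f_\d\in C_b(\Gamma)$ that agrees with $u^\wedge$ only outside a $\d$-neighbourhood of the interior vertices. The resulting error terms $I_1^{\e,\d}(t)$ and $I^\d(t)$ are then controlled through the stopping times $\tau_n^{\e,\eta',\d,\d/2}$, $\si_n^{\e,\eta',\d,\d/2}$ of \eqref{fluid21}, the bound \eqref{f46} on $\sum_n\mathbb{E}_x e^{-\tau_n}$, and the occupation-time estimate \eqref{f43} showing that $X_\e$ spends at most $c\,\d^2|\log\d|$ units of time in $G(\pm\d)$; this is also why the paper needs the restriction $t\geq\tau>0$ (the factor $1/t$ in \eqref{f35}).

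You bypass all of this by arguing that, under the non-degeneracy assumed in Hypothesis~\ref{H1}, $u^\wedge$ actually \emph{does} extend continuously across each interior vertex: the logarithmic blow-up \eqref{fluid5} of the period forces the invariant measure $\mu_{z,k}$ to concentrate at the saddle $x_j$ as $(z,k)\to O_i$, so that $u^\wedge(z,k)\to u(x_j)$ along every incident edge, with a common limit. Once this is established, the lemma becomes an almost immediate corollary of \eqref{f220}, and your proof in fact delivers the conclusion on all of $[0,T]$. Your approach is shorter and conceptually cleaner for this particular lemma. The paper's machinery is not wasted, however: the same stopping-time decomposition and the estimates \eqref{f46}, \eqref{f43} are reused essentially verbatim in the proof of \eqref{f40} for Theorem~\ref{fluid-main}, where no continuity shortcut is available because there one must compare $u$ itself (not $(u^\wedge)^\vee$) with its average along the fast flow.
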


\begin{proof}
Thanks to \eqref{f220}, if $u^\wedge$ were a continuous function on $\Gamma$, then \eqref{fluid24} would follow immediately. Unfortunately, because of the presence of the interior vertices, even if $u$ is continuous on $\mathbb{R}^2$ we cannot conclude that $u^\wedge$ is continuous on $\Gamma$, in general. This means that we have to treat separately the internal vertices and the rest of the points of the graph $\Gamma$.

First of all, we fix $\d>0$ and $f_\d \in\,C_b(\Gamma)$ such that
\[\|f_\d\|_\infty\leq \|u^\wedge\|_\infty\leq \|u\|_\infty,\ \ \ \ f_\d=u^\wedge,\ \text{on}\ \Pi(G(\pm \d/2)^c).\]
Thus, for every $\d \in\,(0,1)$  and $t\geq 0$, we can write
\[\begin{array}{l}
\ds{\mathbb{E}_x\,(u^\wedge)^\vee(X_\e(t))-\bar{\mathbb{E}}_{\Pi(x)}\,u^\wedge (\bar{Y}(t))=\mathbb{E}_x\,\le[(u^\wedge)^\vee(X_\e(t))-f_\d^\vee(X_\e(t))\r]}\\
\vs
\ds{+\le[\mathbb{E}_x\,f_\d^\vee(X_\e(t))-\bar{\mathbb{E}}_{\Pi(x)}\,f_\d (\bar{Y}(t))\r]+\bar{\mathbb{E}}_{\Pi(x)}\le[f_\d (\bar{Y}(t))-u^\wedge (\bar{Y}(t))\r]=:I_1^{\e,\d}(t)+I_2^{\e,\d}(t)+I^\d(t).}
\end{array}\]
If we prove that for any $\eta>0$ there exist $\d_\eta, \e_\eta >0$ such that
\begin{equation}
\label{f26}
\sup_{t \in\,[\tau,T]}\le(|I^{\e,\d_\eta}_1(t)|+|I^{\d_\eta}(t)|\r)<\eta,\ \ \ \ \ \e<\e_\eta,
\end{equation}
then
\[\begin{array}{l}
\ds{\sup_{t \in\,[\tau,T]}\le|\mathbb{E}_x\,(u^\wedge)^\vee(X_\e(t))-\bar{\mathbb{E}}_{\Pi(x)}\,u^\wedge (\bar{Y}(t))\r|}\\
\vs
\ds{\leq \eta+\sup_{t \in\,[\tau,T]}\le|\mathbb{E}_x\,f_{\d_\eta}^\vee(X_\e(t))-\bar{\mathbb{E}}_{\Pi(x)}\,f_{\d_\eta} (\bar{Y}(t))\r|,\ \ \ \ \e<\e_\eta.}
\end{array}\]
Since $f_{\d_\eta} \in\,C_b(\Gamma)$, due to \eqref{f220} and the arbitrariness of $\eta>0$, this implies  \eqref{fluid24}.

Thus, let us prove \eqref{f26}. If we take $\eta^\prime=\eta/8\|u \|_\infty$, due to \eqref{f31} we have
\begin{equation}
\label{f36}
\begin{array}{l}
\ds{I_1^{\e,\d}(t)}\\
\vs
\ds{=\mathbb{E}_x\,\le( (u^\wedge)^\vee(X_\e(t))-f_\d^\vee(X_\e(t))\,;\,t<\rho_{\e,\eta^\prime}\r)+\mathbb{E}_x\,\le( (u^\wedge)^\vee(X_\e(t))-f_\d^\vee(X_\e(t))\,;\,t\geq \rho_{\e,\eta^\prime}\r)}\\
\vs
\ds{\leq \mathbb{E}_x\,\le( (u^\wedge)^\vee(X_\e(t))-f_\d^\vee(X_\e(t))\,;\,t<\rho_{\e,\eta^\prime}\r)+2\,\|u \|_{\infty}\,\mathbb{P}_x( t\geq \rho_{\e,\eta^\prime})}\\
\vs
\ds{\leq \mathbb{E}_x\,\le( (u^\wedge)^\vee(X_\e(t))-f_\d^\vee(X_\e(t))\,;\,t<\rho_{\e,\eta^\prime}\r)+\frac \eta 4=:J^{\e,\eta^\prime,\d}_1(t)+\frac \eta 4.}
\end{array}\end{equation}

Recalling that $f_\d=u^\wedge$ on $\Pi(G(\pm \d/2))^c$, we have that $f_\d^\vee =(u^\wedge)^\vee$ on $G(\pm \d/2)^c$, so that 
\begin{equation}
\label{f37}
\begin{array}{l}
\ds{J_1^{\e,\eta^\prime,\d}(t)=\mathbb{E}_x\,\le( (u^\wedge)^\vee(X_\e(t))-f_\d^\vee(X_\e(t))\,;\,t \in\,\bigcup_{n=0}^\infty \le[\tau_n^{\e,\eta^\prime,\d,\d/2},\si_n^{\e,\eta^\prime,\d,\d/2}\r)\r)}\\
\vs
\ds{= \sum_{n=0}^\infty\, \mathbb{E}_x\,\le( (u^\wedge)^\vee(X_\e(t))-f_\d^\vee(X_\e(t))\,;\,t \in\,\le[\tau_n^{\e,\eta^\prime,\d,\d/2},\si_n^{\e,\eta^\prime,\d,\d/2}\r)\r)=:\sum_{n=0}^\infty J_{1,n}^{\e,\eta^\prime,\d}(t).}
\end{array}\end{equation}
Due to the strong Markov property, we have
\[\begin{array}{l}
\ds{\le|J_{1,n}^{\e,\eta^\prime,\d}(t)\r|=\le|
\mathbb{E}_x\,\le( \le[(u^\wedge)^\vee(X_\e(t))-f_\d^\vee(X_\e(t))\r]\,
\mathbb{I}_{\{t\geq \tau_n^{\e,\eta^\prime,\d,\d/2}\}}\,\mathbb{I}_{\{\si_n^{\e,\eta^\prime,\d,\d/2}> t\}}\r)\r|}\\
\vs
\ds{
\leq \mathbb{E}_x\,\le( \mathbb{I}_{\{\tau^{\e,\eta^\prime,\d,\d/2}_n\leq t\}}\le|\mathbb{E}_{X_\e(\tau_n^{\e,\eta^\prime,\d,\d/2})}\le((u^\wedge)^\vee(X_\e(t))-f_\d^\vee(X_\e(t))\,;\,t <\si_0^{\e,\eta^\prime,\d,\d/2}\r)\r|\r).}
\end{array}\]
Thanks to \eqref{f30}, this implies
\begin{equation}
\label{f32}
\begin{array}{l}
\ds{|J_{1,n}^{\e,\eta^\prime,\d}(t)|}\\
\vs
\ds{\leq \mathbb{P}_x\le(\tau^{\e,\eta^\prime,\d,\d/2}_n\leq t\r)\sup_{y \in\,D(\pm \d/2)\cup C(z_\eta^\prime)}\le|\mathbb{E}_y\le((u^\wedge)^\vee(X_\e(t))-f_\d^\vee(X_\e(t))\,;\,t <\si_0^{\e,\eta^\prime,\d,\d/2}\r)\r|
}\\
\vs
\ds{\leq e^t\,\mathbb{E}_x\,e^{-\tau^{\e,\eta^\prime,\d,\d/2}_n} \sup_{y \in\,D(\pm \d/2)\cup C(z_{\eta^\prime})}\le|\mathbb{E}_y\le((u^\wedge)^\vee(X_\e(t))-f_\d^\vee(X_\e(t))\,;\,t <\si_0^{\e,\eta^\prime,\d,\d/2}\r)\r|.}
\end{array}\end{equation}

According to what proved in \cite[Section 8.3, see (8.3.14)]{fw12}, there exist a constant $c>0$ and  $\d_1>0$  such that for all $x \in\,\mathbb{R}^2$ and for all $\d\leq \d_1$ and $\e>0$ sufficiently small   it holds
\begin{equation}
\label{f46}
\begin{array}{l}
\ds{\sum_{n=0}^\infty \mathbb{E}_x\,e^{-\tau^{\e,\eta^\prime,\d,\d/2}_n}=\sum_{n=0}^\infty \mathbb{E}_x\,\le(e^{-\tau^{\e,\eta^\prime,\d,\d/2}_n}\,;\,\tau^{\e,\eta^\prime,\d,\d/2}_n\leq \rho_{\e,\eta^\prime}\r)\leq \frac c\d.}
\end{array}
\end{equation}
Therefore, from \eqref{f32} we get
\begin{equation}
\label{f35}
\sum_{n=0}^\infty |J_{1,n}^{\e,\eta^\prime,\d}(t)|\leq \frac c\d\,\frac{2\|u \|_\infty e^t}t\,\sup_{y \in\,D(\pm \d/2)}\mathbb{E}_y\,\si_0^{\e,\eta^\prime,\d,\d/2}.
\end{equation}
Because of our definition, $\si_0^{\e,\eta^\prime,\d,\d/2}$ is the first exit time of the process $X_\e(t)$ from $G(\pm \d)$. In \cite[Section 8.5, (8.5.17)]{fw12} it is proved that there exists $\d_2>0$ such that for all $\d<\d_2$ and $\e>0$ sufficiently small
\begin{equation}
\label{f43}
\mathbb{E}_y\,\si_0^{\e,\eta^\prime,\d,\d/2}\leq c\,\d^2|\log \d|,\ \ \ \ y \in\,G(\pm \d).
\end{equation}
In particular, due to  \eqref{f36}, \eqref{f37} and \eqref{f35}, this implies that for all  $\d<\d_0:=\d_1\wedge \d_2$ and all  $\e$ small enough 
\[\sup_{t \in\,[\tau,T]}|I_1^{\e,\d}(t)|\leq \frac c\d\,\frac{2\|u \|_\infty e^T}\tau \d^2|\log \d|+\frac \eta 4.\]
This means that for any $\eta>0$ fixed, there exist $\e_{1,\eta}>0$ and $\d_{1,\eta}>0$ such that
\begin{equation}
\label{f38}
\sup_{t \in\,[\tau,T]}|I_1^{\e,\d}(t)|\leq \frac \eta 2,\ \ \ \e< \e_{1,\eta},\ \ \d< \d_{1,\eta}.
\end{equation}

Concerning $I^\d(t)$, recalling how $f_\d$ was defined, for every $\d>0$ we have
\begin{equation}
\label{f39}
\begin{array}{l}
\ds{|I^\d(t)|\leq 2\,\|u\|_\infty\, \bar{\mathbb{P}}_{\Pi(x)}\le(\bar{Y}(t) \in\,\Pi(G(\pm \d/2))\r)\leq 2\,\|u\|_\infty\, \bar{\mathbb{E}}_{\Pi(x)}\,\psi_\d(\bar{Y}(t))}\\
\vs
\ds{\leq 2\,\|u\|_\infty\,\le| \bar{\mathbb{E}}_{\Pi(x)}\,\psi_\d(\bar{Y}(t))-\mathbb{E}_x\psi_\d^\vee(X_\e(t))\r|+2\,\|u\|_\infty {\mathbb{E}}_{x}\,\psi_\d^\vee(X_\e(t)),}
\end{array}\end{equation}
where $\psi_\d$ is a function in $C_b(\Gamma)$ such that  
\[\mathbb{I}_{\Pi(G(\pm \d/2))}\leq \psi_\d\leq 1,\ \ \ \ \psi_\d\equiv 0,\ \ \text{on}\ \Pi(G(\pm(\d))^c.\]

Now, by proceeding as in the proof of \eqref{f38}, we can find $\e_{2,\eta}>0$ and $\d_{2,\eta}>0$ such that
\begin{equation}
\label{f40}
2\,\|u\|_\infty \sup_{t \in\,[\tau,T]}\,{\mathbb{E}}_{x}\,\psi_\d^\vee(X_\e(t))\leq \frac \eta 4,\ \ \ \e\leq \e_{2,\eta},\ \ \d\leq \d_{2,\eta}.
\end{equation}
Therefore, if we set $\d_\eta:=\d_{1,\eta}\wedge \d_{2,\eta}$, from \eqref{f38} and  \eqref{f39}  we get 
\[\sup_{t \in\,[\tau,T]} \le(|I^{\e,\d_\eta}_1(t)|+|I^{\d_\eta}(t)|\r)\leq \frac \eta 2+2\,\|u\|_\infty \sup_{t \in\,[\tau,T]}\le|\bar{\mathbb{E}}_{\Pi(x)}\,\psi_{\d_\eta}(\bar{Y}(t))-\mathbb{E}_x \psi_{\d_\eta}^\vee(X_\e(t))\r|,\]
for every $\e\leq \e_{1,\eta}\wedge \e_{2,\eta}.$
As $\psi_{\d_\eta} \in\,C_b(\Gamma)$, due to \eqref{f220}, this implies that there exists $\e_\eta\leq \e_{1,\eta}\wedge \e_{2,\eta}$ such that \eqref{f26} holds and hence \eqref{fluid24} follows.

\end{proof}

\subsection{ Proof of Theorem \ref{fluid-main}}
 
 In Lemma \ref{fluid23} we have proved that for any $u \in\,C_b(\mathbb{R}^2)$ and $x \in\,\mathbb{R}^2$ and for any $0<\tau<T$
\[ \lim_{\e\to 0}\,\sup_{t \in\,[\tau,T]}\le|\mathbb{E}_x\,(u^\wedge)^\vee(X_\e(t))-\bar{\mathbb{E}}_{\Pi(x)}\,u^\wedge (\bar{Y}(t))\r|=0.\]
Thus, in order to prove \eqref{fluid17}, it is sufficient to prove that
\begin{equation}
\label{f40}
\lim_{\e\to 0} \sup_{t \in\,[\tau,T]}\le|\mathbb{E}_x \le[u(X_\e(t))-(u^\wedge)^\vee(X_\e(t))\r]\r|=0.
\end{equation}

In what follows, we shall assume that $ u \in\,C^1_b(\mathbb{R}^2)$. Actually, if this is not the case, we can fix a sequence $\{u_n\}_{n \in\,\nat} \subset C^1_b(\mathbb{R}^2)$ such that
\[\lim_{n\to \infty} \|u-u_n\|_\infty=0,\ \ \ \ \|u_n\|_\infty\leq \|u\|_\infty.\]
Since this also implies that
\[\lim_{n\to \infty} \|(u^\wedge)^\vee-(u_n^\wedge)^\vee\|_\infty=0,\]
we get
\[\begin{array}{l}
\ds{\lim_{n\to\infty}\  \sup_{\e>0}\,\sup_{t \in\,[\tau,T]}\le|\mathbb{E}_x \le[u(X_\e(t))-u_n(X_\e(t))(X_\e(t))\r]\r|}\\
\vs
\ds{=\lim_{n\to \infty}\ \sup_{\e>0}\,\sup_{t \in\,[\tau,T]} \le|\mathbb{E}_x \le[(u_n^\wedge)^\vee(X_\e(t))-(u^\wedge)^\vee(X_\e(t))\r]\r|=0.}
\end{array}\]
Therefore, in order to prove \eqref{f40}, we have to prove that for any fixed $n \in\,\mathbb{N}$
\[\lim_{\e\to 0} \sup_{t \in\,[\tau,T]}\le|\mathbb{E}_x \le[u_n(X_\e(t))-(u_n^\wedge)^\vee(X_\e(t))\r]\r|=0.\]

To this purpose, let us  fix $\a>0$ and take $\e>0$ small enough so that $\tau-\e^\a>0$.
If we fix  $\eta >0$ and take $\eta^\prime =\eta/4\|u\|_\infty$, we have 
\[\sup_{t\geq 0}\le|\mathbb{E}_x \le(u(X_\e(t))-(u^\wedge)^\vee(X_\e(t))\,;\,t-\e^\a\geq \rho_{\e,\eta^\prime}\r)\r|\leq \frac \eta 2,\ \ \ \ \e>0,\]
where $\rho_{\e,\eta^\prime}$ is the stopping time defined in \eqref{f41} and satisfying \eqref{f31}.
This implies
\begin{equation}
\label{f61}
\begin{array}{l}
\ds{\le|\mathbb{E}_x \le(u(X_\e(t))-(u^\wedge)^\vee(X_\e(t))\r)\r|\leq \le|\mathbb{E}_x \le(u(X_\e(t))-(u^\wedge)^\vee(X_\e(t))\,;\,t-\e^\a<\rho_{\e,\eta^\prime}\r)\r|+\frac \eta 2.}
\end{array}\end{equation}

Now, as in the proof of Lemma \ref{fluid23}, we have
\begin{equation}
\label{f60}
\begin{array}{l}
\ds{\mathbb{E}_x \le(u(X_\e(t))-(u^\wedge)^\vee(X_\e(t))\,;\,t-\e^\a<\rho_{\e,\eta^\prime}\r)}\\
\vs
\ds{=\sum_{n \in\,\mathbb{N}}\,\mathbb{E}_x \le(u(X_\e(t))-(u^\wedge)^\vee(X_\e(t))\,;\,t-\e^\a \in\,[\tau_n^{\e,\eta^\prime,\d,\d/2},\si_n^{\e,\eta^\prime,\d,\d/2})\r) }\\
\vs
\ds{+\sum_{n \in\,\mathbb{N}}\,\mathbb{E}_x \le(u(X_\e(t))-(u^\wedge)^\vee(X_\e(t))\,;\,t-\e^\a \in\, [\si_n^{\e,\eta^\prime,\d,\d/2},\tau_{n+1}^{\e,\eta^\prime,\d,\d/2})\r) }\\
\vs
\ds{=:\sum_{n \in\,\mathbb{N}}\,J_{1,n}^{\e,\eta^\prime,\d}(t)+\sum_{n \in\,\mathbb{N}}\,J_{2,n}^{\e,\eta^\prime,\d}(t).}
\end{array}\end{equation}

As in the proof of Lemma \ref{fluid23}, due to \eqref{f35}, we have
that there exist  $\d_1>0$ and a constant $c>0$ such that for all $\e$ sufficiently small and $\d< \d_1$
\[\sum_{n \in\,\mathbb{N}}\,J_{1,n}^{\e,\eta^\prime,\d}(t)\leq \frac c\d\,\frac{2\|u\|_\infty  e^{t-\e^\a}}{t-\e^\a}\,\sup_{y \in\,D(\pm \d/2)}\mathbb{E}_y\,\si_0^{\e,\eta^\prime,\d,\d/2},\]
so that, thanks to \eqref{f43}, we get
\begin{equation}
\label{f44}
\sup_{t \in\,[\tau,T]}\,\sum_{n \in\,\mathbb{N}}\,J_{1,n}^{\e,\eta^\prime,\d}(t)\leq c\,\frac{\|u\|_\infty e^T}{\tau-\e^\a}\,\d|\log \d|.\end{equation}

On the other  hand, by using once more the strong Markov property, we have
\[\begin{array}{l}
\ds{|J_{2,n}^{\e,\eta^\prime,\d}(t)|}\\
\vs
\ds{\leq \mathbb{E}_x \le(\mathbb{I}_{\{\si_n^{\e,\eta^\prime,\d,\d/2}\leq t-\e^\a\}}\le|\mathbb{E}_{X_\e(\si_n^{\e,\eta^\prime,\d,\d/2})}\le(u(X_\e(t))-(u^\wedge)^\vee(X_\e(t))\,;\,t-\e^\a <\tau_{1}^{\e,\eta^\prime,\d,\d/2}\r)\r|\r)}\\
\vs
\ds{\leq \mathbb{P}_x\le(\si_n^{\e,\eta^\prime,\d,\d/2}\leq t-\e^\a\r)\sup_{y \in\,D(\pm \d)}\,\le|\mathbb{E}_{y}\le(u(X_\e(t))-(u^\wedge)^\vee(X_\e(t))\,;\,t -\e^\a<\tau_{1}^{\e,\eta^\prime,\d,\d/2}\r)\r|}\\
\vs
\ds{\leq e^{t-\e^\a}\, \mathbb{E}_x\le(e^{-\tau_n^{\e,\eta^\prime,\d,\d/2}}\r)\sup_{y \in\,D(\pm \d)}\le|\mathbb{E}_{y}\le(u(X_\e(t))-(u^\wedge)^\vee(X_\e(t))\,;\,t-\e^\a <\tau_{1}^{\e,\eta^\prime,\d,\d/2}\r)\r|,}
\end{array}\]
and thanks to \eqref{f46}, this implies that there exists $\d_2>0$, such that for any $\d\leq \d_2$
\[\sum_{n=1}^\infty |J_{2,n}^{\e,\eta^\prime,\d}(t)|\leq \frac{c\, e^{t-\e^\a}}{\d}\sup_{y \in\,D(\pm \d)}\le|\mathbb{E}_{y}\le(u(X_\e(t))-(u^\wedge)^\vee(X_\e(t))\,;\,t -\e^\a<\tau_{1}^{\e,\eta^\prime,\d,\d/2}\r)\r|.\]
For every $y \in\,\mathbb{R}^2$, we have
\[\begin{array}{l}
\ds{\mathbb{E}_{y}\le(u(X_\e(t))-(u^\wedge)^\vee(X_\e(t))\,;\,t -\e^\a<\tau_{1}^{\e,\eta^\prime,\d,\d/2}\r)}\\
\vs
\ds{=\mathbb{E}_{y}\le(u(X_\e(t))-(u^\wedge)^\vee(X_\e(t-\e^\a))\,;\,t -\e^\a<\tau_{1}^{\e,\eta^\prime,\d,\d/2}\r)}\\
\vs
\ds{+\mathbb{E}_{y}\le((u^\wedge)^\vee(X_\e(t-\e^\a))-(u^\wedge)^\vee(X_\e(t))\,;\,t -\e^\a<\tau_{1}^{\e,\eta^\prime,\d,\d/2},\,t <\tau_{1}^{\e,\eta^\prime,\d,\d/4}\r)}\\
\vs
\ds{+\mathbb{E}_{y}\le((u^\wedge)^\vee(X_\e(t-\e^\a))-(u^\wedge)^\vee(X_\e(t))\,;\,t -\e^\a<\tau_{1}^{\e,\eta^\prime,\d,\d/2},\,t \geq \tau_{1}^{\e,\eta^\prime,\d,\d/4}\r)=:\sum_{i=1}^3 L_i^{\e,\d}(t,y).}\end{array}\]

Let us start considering $L_2^{\e,\d}(t)$.  If $y \in\,D(\pm \d)$, then $\tau_{1}^{\e,\eta^\prime,\d,\d/4}$ is the first time the process $X_\e$ touches $D(\pm \d/4)\cup C(z_{\eta^\prime})$. This means that  
\[t <\tau_{1}^{\e,\eta^\prime,\d,\d/4}\Longrightarrow X_\e(s) \in\,G(\pm \d/4)^c\cap \{H\leq z_{\eta^\prime}\},\ \ \ \ s\leq t.\]
 In particular, $\Pi(X_\e(s))$ remains in the interior of the same edge of the graph $\Gamma$ where $y$ is, for all $s\leq t<\tau_{1}^{\e,\eta^\prime,\d,\d/4}$. As we are assuming that $u \in\,C^1_b(\mathbb{R}^2)$, due to Lemma \ref{lf51} we have that $u^\wedge$ is continuously differentiable on $\Pi(G(\pm \d/4))^c$, with uniformly bounded derivative. 
  
 As a consequence of It\^o's formula, for every $s<t$ we have
 \[\begin{array}{l}
 \ds{H(X_\e(t))-H(X_\e(s))=\frac 12 \int_s^t \Delta H(X_\e(r))\,dr+\int_s^t \langle \nabla H(X_\e(r)), dw(r)\rangle.}
 \end{array}\]
Hence, since for $y \in\,D(\pm \d)$ and $s<t<\tau_{1}^{\e,\eta^\prime,\d,\d/4}$, the process $\Pi(X_\e(s))$ remains in the same edge of the graph $\Gamma$, we get 
 \begin{equation}
 \label{f56}
 \begin{array}{l}
 \ds{\mathbb{E}_y\le(\le|\Pi(X_\e(t))-\Pi(X_\e(s))\r|^2\,;\,t <\tau_{1}^{\e,\eta^\prime,\d,\d/4}\r)=\mathbb{E}_y\le(\le|H(X_\e(t))-H(X_\e(s))\r|^2\,;\,t <\tau_{1}^{\e,\eta^\prime,\d,\d/4}\r)}\\
 \vs
 \ds{\leq c\,\|\Delta H\|^2_\infty (t-s)^2+\sup_{y^\prime \in  \{H\leq z_{\eta^\prime}\}}|\nabla H(y^\prime)|(t-s)\leq c_{T,\eta^\prime} (t-s).}
 \end{array}\end{equation}
 In particular, for every $y \in\,D(\pm \d)$
 \begin{equation}
 \label{f55}
 \begin{array}{l}
 \ds{|L_2^{\e,\d}(t,y)|}\\
 \vs
 \ds{\leq \mathbb{E}_{y}\le(\|u^\wedge\|_{C^1(\Pi(G(\pm \d/4))^c)}|\Pi(X_\e(t-\e^\a)))-\Pi(X_\e(t))|\,;\,t -\e^\a<\tau_{1}^{\e,\eta^\prime,\d,\d/2},\,t <\tau_{1}^{\e,\eta^\prime,\d,\d/4}\r)}\\
 \vs
 \ds{\leq \|u^\wedge\|_{C^1(\Pi(G(\pm \d/4))^c)}\,c_{T,\eta^\prime}^{1/2}\,\e^{\a/2}.}
 \end{array}
 \end{equation}
 
Next, concerning $L_3^{\e,\d}(t,y)$, we have
\begin{equation}
\label{f400}
\begin{array}{l}
\ds{\mathbb{P}_y\le(t -\e^\a<\tau_{1}^{\e,\eta^\prime,\d,\d/2},\,t \geq \tau_{1}^{\e,\eta^\prime,\d,\d/4}\r)}\\
\vs
\ds{\leq \mathbb{P}_y\le(|H(X_\e(\tau_{1}^{\e,\eta^\prime,\d,\d/4}))-H(X_\e(t-\e^\a))|\geq \d/4\,,\,t -\e^\a<\tau_{1}^{\e,\eta^\prime,\d,\d/2},\,t \geq \tau_{1}^{\e,\eta^\prime,\d,\d/4}\r) }\\
\vs
\ds{\leq \mathbb{P}_y\le(|H(X_\e(t\wedge \tau_{1}^{\e,\eta^\prime,\d,\d/4} ))-H(X_\e((t-\e^\a)\wedge \tau_{1}^{\e,\eta^\prime,\d,\d/4} ))|\geq \d/4\r)}\\
\vs
\ds{\leq \frac{16}{\d^2}\,\mathbb{E}_y\le(|H(X_\e(t\wedge \tau_{1}^{\e,\eta^\prime,\d,\d/4} ))-H(X_\e((t-\e^\a)\wedge \tau_{1}^{\e,\eta^\prime,\d,\d/4} ))|^2\r),}
\end{array}
\end{equation}
and, thanks to \eqref{f56}, this yields
\begin{equation}
\label{f50bis}
\sup_{y \in\,D(\pm \d)}\,|L_3^{\e,\d}(t,y)|\leq \frac{16}{\d^2}\,c_{T,\eta^\prime}\,\e^\a.
\end{equation}

Finally, we consider $L^{\e,\d}_1(t,y)$.  As a consequence of the Markov property, 
\[\begin{array}{l}
\ds{\mathbb{E}_{y}\le(u(X_\e(t))-(u^\wedge)^\vee(X_\e(t-\e^\a))\,;\,t -\e^\a<\tau_{1}^{\e,\eta^\prime,\d,\d/2}\r)}\\
\vs
\ds{=
\mathbb{E}_{y}\le(\psi_\e(\e^\alpha,X_\e(t-\e^\a))\,;\,t -\e^\a<\tau_{1}^{\e,\eta^\prime,\d,\d/2}\r),}
\end{array}\]
where
\[\psi_\e(s,x)=\mathbb{E}_x u(X_\e(s))-(u^\wedge)^\vee(x).\]
Since the family $\{\Pi(X_\e)\}_{\e>0}$ is weakly convergent in $C([0,+\infty);\Gamma)$ and $H(x)\uparrow \infty$, as $|x|\uparrow \infty$, we have that for any $\eta>0$ there exists $M_\eta>0$ such that
\[\begin{array}{l}
\ds{\le|\mathbb{E}_{y}\le(\psi_\e(\e^\alpha,X_\e(t-\e^\a))\,;\,t -\e^\a<\tau_{1}^{\e,\eta^\prime,\d,\d/2}\,,\,|X_\e(t-\e^\a)|> M_\eta\r)\r|}\\
\vs
\ds{\leq 2\,\|u\|_\infty \mathbb{P}_y\le(|X_\e(t-\e^\a)|> M_\eta\r)\leq \eta.}
\end{array}\]
Therefore, 
\[\begin{array}{l}
\ds{|L_1^{\e,\d}(t,y)|\leq \eta+\mathbb{E}_{y}\le(|\psi_\e(\e^\alpha,X_\e(t-\e^\a))|\,;\,t -\e^\a<\tau_{1}^{\e,\eta^\prime,\d,\d/2}\, ,\,|X_\e(t-\e^\a)|\leq M_\eta\r).}
\end{array}\]
%%%%%%%%%%
As above, we write
\[\begin{array}{l}
\ds{\mathbb{E}_{y}\le(|\psi_\e(\e^\alpha,X_\e(t-\e^\a))|\,;\,t -\e^\a<\tau_{1}^{\e,\eta^\prime,\d,\d/2}\, ,\,|X_\e(t-\e^\a)|\leq M_\eta\r)}\\
\vs
\ds{=\mathbb{E}_{y}\le(|\psi_\e(\e^\alpha,X_\e(t-\e^\a))|\,;\,t -\e^\a<\tau_{1}^{\e,\eta^\prime,\d,\d/2}\, ,\,t\geq \tau_{1}^{\e,\eta^\prime,\d,\d/4}\,\,|X_\e(t-\e^\a)|\leq M_\eta\r)}\\
\vs
\ds{+\mathbb{E}_{y}\le(|\psi_\e(\e^\alpha,X_\e(t-\e^\a))|\,;\,t -\e^\a<\tau_{1}^{\e,\eta^\prime,\d,\d/2}\, ,\,t<\tau_{1}^{\e,\eta^\prime,\d,\d/4}\,\,|X_\e(t-\e^\a)|\leq M_\eta\r)}\\
\vs
\ds{=:I_{\e,1}(t)+I_{\e,2}(t).}\end{array}\]
Due to \eqref{f56} and \eqref{f400}, we have
\[I_{\e,1}(t)\leq 2\,\|u\|_\infty \,c_T\frac{\e^\a}{\d^2}.\]

%%%%%%%%%%%%%
Now, in \cite[Lemma 4.3]{f02}, it has been proved
that,  as a consequence of the averaging principle, under  the crucial assumption  \eqref{fluid18} given in Hypothesis \ref{H3},  if $\a \in\,(4/7,2/3)$, then for every fixed $\d>0$
\begin{equation}
\label{fluid20}
\lim_{\e\to 0} \sup_{x \in\,K}\,|\psi_\e(\e^\a,x)|=\lim_{\e\to 0} \sup_{x \in\,K}\, \le|\mathbb{E}_x\, u(X_\e(\e^\a))-(u^\wedge)^\vee (x)\r|=0,
\end{equation}
for any compact subset $K$ in  $G(\pm \d/2)^c$ and   any function $u$ whose support is contained in $G(\pm \d/4)^c$.
Since  $X_\e(t) \in\,G(\pm \d/4)^c$, if  $t<\tau_{1}^{\e,\eta^\prime,\d,\d/4}$, \eqref{fluid20} implies that 
\[\lim_{\e\to 0}\sup_{y \in\,D(\pm \d)}\mathbb{E}_{y}\le(|\psi_\e(\e^\alpha,X_\e(t-\e^\a))|\,;\,t -\e^\a<\tau_{1}^{\e,\eta^\prime,\d,\d/2}\, ,\,t<\tau_{1}^{\e,\eta^\prime,\d,\d/4},\,|X_\e(t-\e^\a)|\leq M_\eta\r)=0,\]
and, because of the arbitrariness of $\eta>0$, we conclude that
\[
\lim_{\e\to 0}\sup_{t \in\,[\tau,T]}\,\sup_{y \in\,D(\pm \d)}\,|L_1^{\e,\d}(t,y)|=0.
\]

This, together with   \eqref{f55} and \eqref{f50bis}, implies that for every $\d\leq \d_2$ fixed
\[\lim_{\e\to 0} \sup_{t \in\,[\tau,T]}\,\sum_{n=1}^\infty |J_{2,n}^{\e,\eta^\prime,\d}(t)|=0.\]
 Therefore, if in \eqref{f44} we pick $\bar{\d} \in\,(0,\d_2]$  such that 
 \[\sup_{t \in\,[\tau, T]}\sum_{n \in\,\mathbb{N}}\,J_{1,n}^{\e,\eta^\prime,\bar{\d}}(t)<\frac \eta 4,\]
 and then we pick $\e_\eta>0$ such that 
 \[\sup_{t \in\,[\tau,T]}\,\sum_{n=1}^\infty |J_{2,n}^{\e,\eta^\prime,\bar{\d}}(t)|<\frac \eta 4,\ \ \ \ \e\leq \e_\eta,\]
 because of \eqref{f61} and \eqref{f60}, we can conclude that 
 \[\sup_{t \in\,[\tau,T]}\le|\mathbb{E}_x \le(u(X_\e(t))-(u^\wedge)^\vee(X_\e(t))\,;\,t-\e^\a<\rho_{\e,\eta^\prime}\r)\r|<\frac \eta 2,\ \ \ \ \ \e<\e_\eta,\]
 and \eqref{f40} follows.

 \section{Some consequences of Theorem \ref{fluid-main}}
 \label{sec6}

 The first immediate consequence of Theorem \ref{fluid-main} is that the semigroup $S_\e(t)$ converges to the semigroup $\bar{S}(t)$ in  $H_\gamma$, as $\e\downarrow 0$.
 
 \begin{Corollary}
 Under Hypotheses \ref{H1}, \ref{H2} and \ref{H3}, for every $0<\tau<T$ and $u \in\,H_\gamma$ we have
 \begin{equation}
 \label{f70}
 \lim_{\e\to 0} \sup_{t \in\,[\tau,T]}|S_\e(t) u-\bar{S}(t)^{\vee} u|_{H_\gamma}= \lim_{\e\to 0} \sup_{t \in\,[\tau,T]}|(S_\e(t) u)^\wedge -\bar{S}(t) u^\wedge|_{\bar{H}_\gamma}=0.
 \end{equation}
  \end{Corollary}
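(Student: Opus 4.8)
The plan is to reduce the double equality \eqref{f70} to the single assertion
\[
\lim_{\e\to 0}\,\sup_{t\in[\tau,T]}\bigl|S_\e(t)u-(\bar{S}(t)u^\wedge)^\vee\bigr|_{H_\gamma}=0,\qquad u\in H_\gamma,
\]
which I shall call $(\star)$, and to prove $(\star)$ first for $u\in C_b(\mathbb{R}^2)$ and then for arbitrary $u\in H_\gamma$ by a density argument. For the reduction, recall that $\bigl((\bar{S}(t)u^\wedge)^\vee\bigr)^\wedge=\bar{S}(t)u^\wedge$ (apply \eqref{fluid12} with the constant function $1$), so by linearity of $v\mapsto v^\wedge$ and by \eqref{fluid7},
\[
\bigl|(S_\e(t)u)^\wedge-\bar{S}(t)u^\wedge\bigr|_{\bar{H}_\gamma}=\bigl|\bigl(S_\e(t)u-(\bar{S}(t)u^\wedge)^\vee\bigr)^\wedge\bigr|_{\bar{H}_\gamma}\leq\bigl|S_\e(t)u-(\bar{S}(t)u^\wedge)^\vee\bigr|_{H_\gamma},
\]
and hence $(\star)$ forces both limits in \eqref{f70} to vanish, so in particular they are equal.

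For $u\in C_b(\mathbb{R}^2)$ I would argue by dominated convergence, with Theorem \ref{fluid-main} providing the pointwise convergence. For each fixed $x\in\mathbb{R}^2$ that theorem gives $g_\e(x):=\sup_{t\in[\tau,T]}\bigl|S_\e(t)u(x)-(\bar{S}(t)u^\wedge)^\vee(x)\bigr|\to 0$ as $\e\to 0$, while $\|S_\e(t)u\|_\infty\leq\|u\|_\infty$ and $\|(\bar{S}(t)u^\wedge)^\vee\|_\infty\leq\|u^\wedge\|_\infty\leq\|u\|_\infty$ (both $S_\e(t)$ and $\bar{S}(t)$ are Markov transition semigroups and $v\mapsto v^\wedge$ is an averaging), so $g_\e\leq 2\|u\|_\infty$, which lies in $L^2(\mathbb{R}^2,\gamma^\vee\,dx)$ because this measure is finite (Section \ref{sec3}). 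Since
\[
\sup_{t\in[\tau,T]}\bigl|S_\e(t)u-(\bar{S}(t)u^\wedge)^\vee\bigr|_{H_\gamma}^2\leq\int_{\mathbb{R}^2}g_\e(x)^2\,\gamma^\vee(x)\,dx,
\]
dominated convergence yields $(\star)$ for such $u$. The only mildly technical point is the measurability of $g_\e$, which follows from the continuity of $t\mapsto S_\e(t)u(x)$ on $(0,\infty)$ — a consequence of the smoothing property of $S_\e(t)$ — so that the supremum over $[\tau,T]$ coincides with the supremum over the rationals there.

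For general $u\in H_\gamma$ I would use density of $C_b(\mathbb{R}^2)$ in $H_\gamma=L^2(\mathbb{R}^2,\gamma^\vee\,dx)$: given $\varepsilon'>0$, choose $v\in C_b(\mathbb{R}^2)$ with $|u-v|_{H_\gamma}<\varepsilon'$ and split
\[
S_\e(t)u-(\bar{S}(t)u^\wedge)^\vee=S_\e(t)(u-v)+\bigl(S_\e(t)v-(\bar{S}(t)v^\wedge)^\vee\bigr)+\bigl(\bar{S}(t)(v^\wedge-u^\wedge)\bigr)^\vee.
\]
In $H_\gamma$, the first term is at most $c_T|u-v|_{H_\gamma}$ by Hypothesis \ref{H2} (inequality \eqref{f65}), the second tends to $0$ uniformly on $[\tau,T]$ as $\e\to 0$ by the case already treated, and the third is at most $\|\bar{S}(t)\|_{\mathcal{L}(\bar{H}_\gamma)}\,|v-u|_{H_\gamma}$ by \eqref{fluid8} and \eqref{fluid7}; letting $\e\to 0$ and then $\varepsilon'\to 0$ gives $(\star)$, hence the Corollary.

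The step I expect to require the most care — and the one ingredient not already contained in the excerpt — is the uniform bound $\|\bar{S}(t)\|_{\mathcal{L}(\bar{H}_\gamma)}\leq c_T$ for $t\in[0,T]$ used above (this is also what makes $\bar{S}(t)^\vee$ a bounded operator, in the sense of Section \ref{sec3}). I would obtain it by transferring Hypothesis \ref{H2} to the limit semigroup: for $\psi\in C_b(\Gamma)$, \eqref{fluid25} gives $S_\e(t)\psi^\vee(x)\to(\bar{S}(t)\psi)^\vee(x)$ pointwise in $x$, so Fatou's lemma together with \eqref{f65} and \eqref{fluid8} gives $|\bar{S}(t)\psi|_{\bar{H}_\gamma}\leq c_T\,|\psi|_{\bar{H}_\gamma}$, and this extends to all of $\bar{H}_\gamma=L^2(\Gamma,\nu_\gamma)$ by density of $C_b(\Gamma)$ there (the measure $\nu_\gamma$ being finite by \eqref{fluid1}). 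The remaining estimates are routine measure theory on a finite measure space.
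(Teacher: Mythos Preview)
Your proof is correct and follows essentially the same approach as the paper's: reduce to the first limit via \eqref{fluid7}, prove it for $u\in C_b(\mathbb{R}^2)$ by dominated convergence with Theorem~\ref{fluid-main} supplying pointwise convergence, and then extend to $H_\gamma$ by density using the uniform bound \eqref{f65} together with a bound on the limit operator. The only packaging difference is in how that last bound is obtained: the paper first gets $|\bar{S}(t)^\vee u|_{H_\gamma}\le c_T|u|_{H_\gamma}$ for $u\in C_b(\mathbb{R}^2)$ directly from the $H_\gamma$-convergence just established (passing to the limit in \eqref{f65}), then defines $\bar{S}(t)^\vee$ on all of $H_\gamma$ by a Cauchy-sequence argument, and only afterwards (in the next Corollary) reads off the $\bar{H}_\gamma$-bound \eqref{f87}; you instead derive $\|\bar{S}(t)\|_{\mathcal{L}(\bar{H}_\gamma)}\le c_T$ independently via Fatou applied to \eqref{fluid25}, which is an equally valid and perhaps slightly cleaner route.
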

  
  \begin{proof}

First of all, we notice that in view of \eqref{fluid7}, the first limit in \eqref{f70} implies the second one. So, we will only prove the first limit. We have
\[|S_\e(t) u-\bar{S}(t)^{\vee} u|_{H_\gamma}^2=\int_{\mathbb{R}^2}|S_\e(t) u(x)-\bar{S}(t)^{\vee} u(x)|^2\gamma^\vee(x)\,dx.\]
If $u \in\,C_b(\mathbb{R}^2)$, we have
\[\sup_{t\geq 0}|S_\e(t) u(x)-\bar{S}(t)^{\vee} u(x)|\leq 2\,|u|_\infty.\]
Hence, since $\gamma^\vee \in\,L^1(\mathbb{R}^2)$, in view of \eqref{fluid17} and the dominated convergence theorem, we have that the first limit in \eqref{f70} is true for every $u \in\,C_b(\mathbb{R}^2)$. Moreover, in view of Hypothesis \ref{H2}, we have that 
\begin{equation}
\label{f73}
|S_\e(t) u|_{H_\gamma}\leq c_T\,|u|_{H_\gamma},\ \ \ \ \ t \in\,[0,T],\ \ \ \e>0,\end{equation}
so that $\bar{S}(t)^\vee u \in\,H_\gamma$ and
\begin{equation}
\label{f72}
|\bar{S}(t) ^\vee u|_{H_\gamma}\leq c_T\,|u|_{H_\gamma},\ \ \ \ \ t \in\,[0,T].\end{equation}

Since $C_c(\mathbb{R}^2)$ is dense in $L^2(\mathbb{R}^2)$ and we assume the weight $\gamma^\vee$ to be continuous and strictly positive, we have that $C_c(\mathbb{R}^2)$ is dense in $H_\gamma$. Actually, if $u \in\,H_\gamma$, then $u\sqrt{\gamma^\vee} \in\,L^2(\mathbb{R}^2)$. Then, if $\{u_n\}_{n \in\,\nat}$ is a sequence in $C_c(\mathbb{R}^2)$, converging to $u\sqrt{\gamma^\vee} \in\,H_\gamma$ in $L^2(\mathbb{R}^2)$, we have that the sequence $\{u_n/\sqrt{\gamma^\vee}\}_{n \in\,\nat}$ converges to $u$ in $H_\gamma$. Moreover, as $u_n$ has compact support and $1/\sqrt{\gamma^\vee}$ is continuous and positive, it follows that $\{u_n/\sqrt{\gamma^\vee}\}_{n \in\,\nat}\subset C_c(\mathbb{R}^2)$.

Thus, for any  $u \in\,H_\gamma$, we can fix a sequence $\{u_n\}_{n \in\,\nat }\subset C_c(\mathbb{R}^2)$ converging to $u$ in $H_\gamma$. Thanks to \eqref{f72}, we get that the sequence $\{\bar{S}(t)^\vee u_n\}_{n \in\,\nat}$ is Cauchy in $H_\gamma$, so that we conclude that
\[\exists \lim_{n\to \infty}\bar{S}(t)^\vee u_n=:\bar{S}(t)^\vee u \in\,H_\gamma,\]
the limit does not depend on the sequence $\{u_n\}_{n \in\,\nat}$ and 
and \eqref{f72} holds for every $u \in\,H_\gamma$.

Finally, since we have
\[\begin{array}{l}
\ds{|S_\e(t)u-\bar{S}(t)^\vee u|_{H_\gamma}\leq |S_\e(t)(u-u_n)|_{H_\gamma}+|\bar{S}(t)^\vee (u-u_n)|_{H_\gamma}+|S_\e(t)u_n-\bar{S}(t)^\vee u_n|_{H_\gamma},}
\end{array}\]
according to \eqref{f73} and \eqref{f72}, for every $\eta>0$ we can find $\eta_\eta \in\,\nat$ such that 
\[\sup_{t \in\,[\tau,T]}|S_\e(t)u-\bar{S}(t)^\vee u|_{H_\gamma}\leq \eta +\sup_{t \in\,[\tau,T]}|S_\e(t)u_{n_\eta}-\bar{S}(t)^\vee u_{n_\eta}|_{H_\gamma}.\]  This allows to conclude, as $u_{u_\eta} \in\,C_c(\mathbb{R}^2)$.
  \end{proof}

\begin{Remark}
{\em    From the proof of the corollary above, it is  clear that from the pointwise convergence of $S_\e(t)u$  to $\bar{S}(t)^\vee u$, as stated in Theorem \ref{fluid-main}, we cannot conclude that limit \eqref{f70} is also true in $L^2(\mathbb{R}^2)$, as the Lebesgue measure in $\mathbb{R}^2$ is not finite. It is only after introducing a weight that we can prove the convergence in ${H}_\gamma$. }
\end{Remark}

\begin{Corollary}
Under Hypotheses \ref{H1}, \ref{H2} and \ref{H3}, we have that the semigroup $\bar{S}(t)$ is well defined in $\bar{H}_\gamma$ and for any $T>0$ there exists $c_T>0$ such that
\begin{equation}
\label{f87}
\|\bar{S}(t)\|_{\mathcal{L}(\bar{H}_\gamma)}\leq c_T,\ \ \ \ t \in\,[0,T].\end{equation}
\end{Corollary}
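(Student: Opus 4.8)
The plan is to read off \eqref{f87} from the mapping properties of $\bar{S}(t)^\vee$ already obtained in the proof of the previous corollary, combined with the operator construction of Section \ref{sec3}.

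\textbf{Step 1.} I would first recall that, in the proof of the corollary establishing \eqref{f70}, it was shown that the operator $\bar{S}(t)^\vee$ — initially defined on $C_b(\mathbb{R}^2)$ by $\bar{S}(t)^\vee u(x)=\bar{\mathbb{E}}_{\Pi(x)}\,u^\wedge(\bar{Y}(t))$ and then extended to all of $H_\gamma$ by density of $C_c(\mathbb{R}^2)$ — belongs to $\mathcal{L}(H_\gamma)$ with
\[
\|\bar{S}(t)^\vee\|_{\mathcal{L}(H_\gamma)}\leq c_T,\qquad t\in\,[0,T].
\]
\textbf{Step 2.} Since $\bar{S}(t)^\vee\in\,\mathcal{L}(H_\gamma)$, the construction of Section \ref{sec3} applies and produces an operator $(\bar{S}(t)^\vee)^\wedge\in\,\mathcal{L}(\bar{H}_\gamma)$, acting by $(\bar{S}(t)^\vee)^\wedge f=(\bar{S}(t)^\vee f^\vee)^\wedge$, with
\[
\|(\bar{S}(t)^\vee)^\wedge\|_{\mathcal{L}(\bar{H}_\gamma)}\leq \|\bar{S}(t)^\vee\|_{\mathcal{L}(H_\gamma)}\leq c_T,\qquad t\in\,[0,T].
\]

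\textbf{Step 3.} It remains to identify $(\bar{S}(t)^\vee)^\wedge$ with the Markov semigroup $\bar{S}(t)$ of Subsection \ref{ssec2.3} on a dense subspace of $\bar{H}_\gamma$, so that \eqref{f87} is genuinely a statement about $\bar{S}(t)$. For $f\in\,C_b(\Gamma)$ one has $f^\vee\in\,C_b(\mathbb{R}^2)$ and, since $\mu_{z,k}$ is a probability measure, $(f^\vee)^\wedge=f$; hence the very definition of $\bar{S}(t)^\vee$ gives, pointwise,
\[
\bar{S}(t)^\vee f^\vee(x)=\bar{\mathbb{E}}_{\Pi(x)}\,(f^\vee)^\wedge(\bar{Y}(t))=\bar{\mathbb{E}}_{\Pi(x)}\,f(\bar{Y}(t))=(\bar{S}(t)f)^\vee(x),
\]
and applying $\wedge$ together with $((\bar{S}(t)f)^\vee)^\wedge=\bar{S}(t)f$ yields $(\bar{S}(t)^\vee)^\wedge f=\bar{S}(t)f$ for every $f\in\,C_b(\Gamma)$. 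Finally, $C_b(\Gamma)\cap\bar{H}_\gamma$ is dense in $\bar{H}_\gamma$ — this is proved exactly as the density of $C_c(\mathbb{R}^2)$ in $H_\gamma$ in the previous proof, factoring $f=(f\sqrt{\gamma\,T_k})/\sqrt{\gamma\,T_k}$ on each edge and approximating $f\sqrt{\gamma\,T_k}$ in $L^2$ of the Lebesgue measure — so $\bar{S}(t)$ is bounded on this dense subspace and its (unique) bounded extension to $\bar{H}_\gamma$ coincides with $(\bar{S}(t)^\vee)^\wedge$, which gives \eqref{f87}.

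I do not expect a genuine obstacle here: the whole argument is bookkeeping on top of Proposition \ref{p31}, Hypothesis \ref{H2} and Theorem \ref{fluid-main}. The only point requiring some care is keeping track of which functions are merely bounded Borel — such as $u^\wedge$ for $u\in\,C_b(\mathbb{R}^2)$, which need not be continuous at the interior vertices — and which are continuous, when one passes back and forth between the pointwise definitions via $\bar{\mathbb{E}}$ and the $L^2$-extensions by density.
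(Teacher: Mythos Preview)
Your proof is correct and rests on the same ingredients as the paper's—the bound \eqref{f72} on $\bar{S}(t)^\vee$ from the previous corollary together with Proposition \ref{p31}. The paper's route is slightly shorter: instead of forming $(\bar{S}(t)^\vee)^\wedge$ and then identifying it with $\bar{S}(t)$ on a dense subspace, it uses the isometry \eqref{fluid8} and the identity $(\bar{S}(t)f)^\vee=\bar{S}(t)^\vee(f^\vee)$ (immediate from $(f^\vee)^\wedge=f$) to write in one line
\[
|\bar{S}(t)f|_{\bar{H}_\gamma}=|(\bar{S}(t)f)^\vee|_{H_\gamma}=|\bar{S}(t)^\vee(f^\vee)|_{H_\gamma}\leq c_T\,|f^\vee|_{H_\gamma}=c_T\,|f|_{\bar{H}_\gamma},
\]
which bypasses your Step~3 entirely. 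The key observation you are not exploiting is that $\vee$ is an \emph{isometric} embedding of $\bar{H}_\gamma$ into $H_\gamma$, not merely a contraction like $\wedge$; this lets one transfer the $H_\gamma$-bound back to $\bar{H}_\gamma$ directly, without the $Q\mapsto Q^\wedge$ construction and the accompanying density argument.
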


\begin{proof}
In \eqref{f72} we have seen that $\bar{S}(t)^\vee$ is well defined in $H_\gamma$ and for any $T>0$ there exists $c_T>0$ such that for any $u \in\,H_\gamma$
\[|\bar{S}(t) ^\vee u|_{H_\gamma}\leq c_T\,|u|_{H_\gamma},\ \ \ \ \ t \in\,[0,T].\]
Therefore, thanks to \eqref{fluid8} and \eqref{fluid7}, if $f \in\,\bar{H}$
\[|\bar{S}(t)f|_{\bar{H}_\gamma}=|(\bar{S}(t)f)^\vee|_{{H}_\gamma}=|\bar{S}(t)^\vee (f^\vee)|_{{H}_\gamma}
\leq c_T\,|f^\vee|_{{H}_\gamma}=c_T\,|f|_{\bar{H}_\gamma},\ \ \ \ t \in\,[0,T],\]
and this allows to conclude.

\end{proof}

\section{From the SPDE on $\mathbb{R}^2$ to the SPDE on the graph}
\label{sec5}

We are interested here in the equation
\begin{equation}
\label{stoch-eq}
\le\{\begin{array}{l}
\ds{\partial_t u_\e(t,x)=\mathcal{L}_\e u_\e(t,x)+b(u_\e(t,x))+g(u_\e(t,x))\partial_t \mathcal{W}(t,x),\ \ \ \ t>0,}\\
\vs
\ds{u_\e(0,x)=\varphi(x),\ \ \ x \in\,\mathbb{R}^2.}
\end{array}\r.
\end{equation}
 $\mathcal{L}_\e$ is the second order differential operator defined by
\[\mathcal{L}_\e u(x)=\frac 12 \,\Delta u(x)+\frac 1\e \langle \bar{\nabla }H(x),\nabla u(x)\rangle,\ \ \ \ x \in\,\mathbb{R}^2,\]
associated with equation \eqref{ham-epsilon-bis} and with Markov transition semigroup $S_\e(t)$.
The Hamiltonian $H$ satisfies Hypotheses \ref{H1} and \ref{H2} and and the nonlinearities $b, g:\mathbb{R}^2\to \mathbb{R}$ are assumed to be Lipschitz continuous. 

Concerning the random forcing $\mathcal{W}$, we assume that it is a {\em spatially homogeneous Wiener process}, with finite spectral measure $\mu$ (see e.g. \cite{pz} and \cite{DPZ} for all details). This means that there exists a Gaussian random field on $[0,+\infty)\times \mathbb{R}^2$,  that we also denote by $\mathcal{W}$,  defined on some stochastic basis $(\boldsymbol{\Omega}, \boldsymbol{\mathbf{\mathcal{F}}}, \{\boldsymbol{\mathcal{F}}_t\}_{t\geq 0},\mathbf{{P}})$, such that
\begin{enumerate}
\item the mapping $(t,x)\mapsto \mathcal{W}(t,x)$ is continuous with respect to $t$ and measurable with respect to both variables, $\mathbf{P}$-almost surely;
\item for each $x \in\,\mathbb{R}^2$, the process $\mathcal{W}(t,x)$, $t\geq 0$, is a one-dimensional Wiener process;
\item for every $t,s \geq 0$ and $x, y \in\,\mathbb{R}^2$
\begin{equation}
\label{f83}
\mathbf{E}\, \mathcal{W}(t,x)\mathcal{W}(s,y)=(t\wedge s)\, \Lambda(x-y),
\end{equation}
where $\Lambda$ is the  Fourier transform of the spectral measure $\mu$, that is
\[\Lambda(x)=\int_{\mathbb{R}^2} e^{i\langle x,\la\rangle}\,\mu(d\la),\ \ \ \ x \in\,\mathbb{R}^2.\]
\end{enumerate}

Notice that, with this definition, $\mathcal{W}(t,\cdot) \in\,L^2(\boldsymbol{\Omega};H_\gamma)$. Actually, we have
\[\mathbf{E}|\mathcal{W}(t,\cdot)|_{H_\gamma}^2=\mathbf{E}\int_{\mathbb{R}^2}|\mathcal{W}(t,x)|^2\gamma^\vee(x)\,dx=\int_{\mathbb{R}^2}\mathbf{E}|\mathcal{W}(t,x)|^2\gamma^\vee(x)\,dx=t\,\Lambda(0)\int_{\mathbb{R}^2}\gamma^\vee(x)\,dx.\]

In what follows, we  denote by $L^2_{(s)}(\mathbb{R}^2,d\mu)$  the subspace of the Hilbert space $L^2(\mathbb{R}^2,d\mu;\mathbb{C})$ consisting of all functions $\varphi$ such that $\varphi_{(s)}=\varphi$, where
\[\varphi_{(s)}(x)=\overline{\varphi(-x)},\ \ \ x \in\,\mathbb{R}^2.\]
Moreover, we denote by $RK$ the  {\em reproducing kernel} Hilbert space of the Wiener process $\mathcal{W}$ (see \cite{DPZ} for the definition).

As shown in \cite[Proposition 1.2]{pz}, an orthonormal basis for the reproducing kernel $RK$ is given by $\{\widehat{u_j \mu}\}_{j \in\,\nat}$, where $\{u_j\}_{j \in\,\nat}$ is a complete orthonormal basis of  the Hilbert space $L^2_{(s)}(\mathbb{R}^2,d\mu)$ and 
\[\widehat{u_j \mu}(x)=\int_{\mathbb{R}^2} e^{i\langle x,\la\rangle}u_j(\la)\,\mu(d\la),\ \ \ \ x \in\,\mathbb{R}^2.\]
This means, in particular, that $\mathcal{W}(t,x)$ can be represented as 
\begin{equation}
\label{f81}
\mathcal{W}(t,x)=\sum_{j=1}^\infty\, \widehat{u_j \mu}(x)\,\beta_j(t),\ \ \ \ t\geq 0,\end{equation}
for some  sequence of independent Brownian motions $\{\beta_j\}_{j \in\,\nat}$, all defined on the same stochastic basis.
Moreover, in \cite{pz} it is also shown that 
\begin{equation}
\label{f86}
\sum_{j=1}^\infty |\widehat{u_j \mu}|^2_{H_\gamma}<\infty.
\end{equation}

For every $u \in\,H_\gamma$ and $v$ in the reproducing kernel of $\mathcal{W}$, we shall denote
\[B(u)(x)=b(u(x)),\ \ \ \ G(u)v(x)=[g(u)v](x),\ \ \ \ x \in\,\mathbb{R}^2.\]
Since we are assuming $b$ to be Lipschitz continuous, we have that
$B:H_\gamma\to H_\gamma$ is Lipschitz continuous. Moreover, as proved in \cite[Lemma 4.1]{pz}, $G$ maps   $H_\gamma$ into $\mathcal{L}_2(RK,H_\gamma)$, where $\mathcal{L}_2(RK,H_\gamma)$ is the space of Hilbert-Schmidt operators defined on $RK$ with values in $H_\gamma$. Furthermore, for every $u_1, u_2 \in\,H_\gamma$
\begin{equation}
\label{f107}
|G(u_1)-G(u_2)|_{\mathcal{L}_2(RK,H_\gamma)}\leq c\,|u_1-u_2|_{H_\gamma}.\end{equation}
By using a stochastic factorization argument and \eqref{f65}, this implies that 
\begin{equation}
\label{f91}
\mathbf{E}\sup_{t \in\,[0,T]} \le|\int_0^t S_\e(t-s)\,\le[G(u_1(s))-G(u_2(s))\r]\,d\mathcal{W}(s)\r|_{H_\gamma}^p\leq c_p(T)\,\mathbf{E}\sup_{t \in\,[0,T]} |u_1(t)-u_2(t)|^p_{H_\gamma}.
\end{equation}

\begin{Definition}
A predictable process $u_\e(t)$, taking values in $H_\gamma$ is a mild solution to equation \eqref{stoch-eq} if it satisfies the following integral equation
\begin{equation}
\label{mild}
u_\e(t)=S_\e(t)\varphi+\int_0^t S_\e(t-s) B(u_\e(s))\,ds+\int_0^t S_\e(t-s)\,G(u_\e(s))\,d\mathcal{W}(s).\end{equation}
\end{Definition}

In \cite[Theorem 2.1]{pz} it is proved that, under our assumptions on the Hamiltonian $H$, the coefficients $b$ and $g$ and the noise $\mathcal{W}$, for every $\varphi \in\,H_\gamma$ and every $p\geq 1$ and $T>0$ there exists a unique mild solution $u_\e$ for equation \eqref{stoch-eq} in $L^p(\boldsymbol{\Omega};C([0,T];H_\gamma))$. Moreover,  we have
\begin{equation}
\label{f80}
\mathbb{E}\sup_{t \in\,[0,T]}|u_\e(t)|_{H_\gamma}^p=c_p(T),\ \ \ \ \e>0.
\end{equation}

Our purpose  is studying the limiting behavior of $u_\e$ in the space $L^p(\boldsymbol{\Omega};C([0,T];H_\gamma))$ or, equivalently, the limit of $u_\e^\wedge $ in the space $L^p(\boldsymbol{\Omega};C([0,T];\bar{H}_\gamma))$, as $\e\downarrow 0$. 

The limiting process will be the solution  $\bar{u}$ of the following SPDE on the graph $\Gamma$
\begin{equation}
\label{eq-graph}
\le\{\begin{array}{l}
\ds{\partial_t \bar{u}(t,z,k)=\bar{{L}}\,\bar{u}(t,z,k)+b(\bar{u}(t,z,k))+g(\bar{u}(t,z,k))\,\partial_t \bar{\mathcal{W}}(t,z,k),\ \ \ \ t>0,}\\
\vs
\ds{\bar{u}(0,z,k)=\varphi^\wedge(z,k),\ \ \ (z,k) \in\,\Gamma.}
\end{array}\r.
\end{equation}
Here $\bar{L}$ is the differential operator  on the graph $\Gamma$,  introduced in Subsection \ref{ssec2.3}, generator of the limiting Markov process $\bar{Y}(t)$. Concerning the noisy forcing $\bar{\mathcal{W}}$, it is defined by  
\[\bar{\mathcal{W}}(t,z,k)=\sum_{j=1}^\infty\, (\widehat{u_j \mu})^\wedge (z,k)\,\beta_j(t),\ \ \ \ t\geq 0\ \ \ (z,k) \in\,\Gamma,\]
where $\{\beta_j\}_{j \in\,\nat}$ is the sequence of independent Brownian motions introduced in \eqref{f81}.
We have
\[\begin{array}{l}
\ds{\mathbf{E}\,\bar{\mathcal{W}}(t,z_1,k_1)\bar{\mathcal{W}}(s,z_2,k_2)=  \mathbf{E}\,\oint_{C_{k_1}(z_1)}\mathcal{W}(t,x)\,d\mu_{z_1,k_1}\oint_{C_{k_2}(z_2)}\mathcal{W}(t,y)\,d\mu_{z_2,k_2} }\\
\vs
\ds{=\oint_{C_{k_1}(z_1)}\oint_{C_{k_2}(z_2)}\mathbf{E}\,\mathcal{W}(t,x)\mathcal{W}(t,y)\,\,d\mu_{z_1,k_1}\,d\mu_{z_2,k_2}.}
\end{array}\]
Thanks to \eqref{f83},
this gives
\begin{equation}
\label{f84}
\begin{array}{l}
\ds{\mathbf{E}\,\bar{\mathcal{W}}(t,z_1,k_1)\bar{\mathcal{W}}(s,z_2,k_2)}\\
\vs
\ds{=(t\wedge s)\oint_{C_{k_1}(z_1)}\oint_{C_{k_2}(z_2)}\int_{\mathbb{R}^2} e^{i\langle \la,x-y\rangle}\,\mu(d\la)\,\,d\mu_{z_1,k_1}\,d\mu_{z_2,k_2}}\\
\vs
\ds{=(t\wedge s)\int_{\mathbb{R}^2}\le[\oint_{C_{k_1}(z_1)}e^{i\langle \la,x\rangle}\,d\mu_{z_1,k_1}\ \oint_{C_{k_2}(z_2)}e^{-i\langle \la,y\rangle}\,d\mu_{z_2,k_2}\r]\,\mu(d\la)}\\
\vs
\ds{=(t\wedge s)\int_{\mathbb{R}^2} \le(e^{i\langle \la,\cdot\rangle}\r)^{\wedge} (z_1,k_1) \le(e^{-i\langle \la,\cdot\rangle}\r)^{\wedge} (z_2,k_2)\,\mu(d\la).}
\end{array}\end{equation}
It is immediate to check that $(\bar{u})^\wedge=\overline{u^\wedge}$, for every $u:\mathbb{R}^2\to \mathbb{C}$.
Hence, thanks to \eqref{f84} and \eqref{fluid7}, this allows to conclude
\[\begin{array}{l}
\ds{\mathbf{E}\,|\bar{\mathcal{W}}(t)|^2_{\bar{H}_\gamma}=\sum_{k=1}^n\int_{I_k} \mathbf{E}\,|\bar{\mathcal{W}}(t,z,k)|^2 T_k(z)\gamma(z,k)\,dz}\\
\vs
\ds{=t\sum_{k=1}^n\int_{I_k}\int_{\mathbb{R}^2}\le(e^{i\langle \la,\cdot\rangle}\r)^{\wedge} \le(e^{-i\langle \la,\cdot\rangle}\r)^{\wedge} (z,k)\,\mu(d\la)\ T_k(z)\,\gamma(z,k)\,dz}\\
\vs
\ds{=t\int_{\mathbb{R}^2}\le[\,\sum_{k=1}^n\int_{I_k}\big|\le( e^{i\langle \la,\cdot\rangle}\r)^\wedge\big|^2 (z,k)\ T_k(z)\,\gamma(z,k)\,dz\r]\mu(d\la)=t\,\int_{\mathbb{R}^2}\big|\le(e^{i\langle \la,\cdot\rangle}\r)^\wedge\big|^2_{\bar{H}_\gamma}\,\mu(d\la)}\\
\vs
\ds{\leq t\,\int_{\mathbb{R}^2}\big|e^{i\langle \la,\cdot\rangle}\big|^2_{{H}_\gamma}\,\mu(d\la)=t\,\mu(\mathbb{R}^2)\int_{\mathbb{R}^2}\gamma^\vee(x)\,dx<\infty.}
\end{array}\]
This means that $\bar{\mathcal{W}}(t) \in\,L^2(\boldsymbol{\Omega};\bar{H}_\gamma)$, for every $t\geq 0$.

Moreover, due to \eqref{fluid7} and \eqref{f86}, we have
\[\sum_{j=1}^\infty |(\widehat{u_j \mu})^\wedge|^2_{\bar{H}_\gamma}\leq \sum_{j=1}^\infty |\widehat{u_j \mu}|^2_{{H}_\gamma}<\infty.\]
Then, in view of  \eqref{f87}, we have that for every $v \in\,L^p(\boldsymbol{\Omega};C([0,T];\bar{H}_\gamma))$ the stochastic convolution
\[ t \in\,[0,+\infty)\mapsto \int_0^t \bar{S}(t-s)\,G(v(s))\,d\bar{\mathcal{W}}(s)=
\sum_{k=1}^\infty\int_0^t \bar{S}(t-s) \le[G(v(s))(\widehat{u_j \mu})^\wedge\r]\,d\beta_j(s),\] is well defined in $L^2(\boldsymbol{\Omega};\bar{H}_\gamma)$ and if $v_1, v_2 \in\,L^p(\boldsymbol{\Omega};C([0,T];\bar{H}_\gamma))$, we have
\begin{equation}
\label{f89}
\mathbf{E}\sup_{t \in\,[0,T]}\,\le|\int_0^t \bar{S}(t-s)\,\le[G(v_1(s))-G(v_2(s))\r]\,d\bar{\mathcal{W}}(s)\r|^2_{\bar{H}_\gamma}
\leq c(T)\,\mathbf{E}\sup_{t \in\,[0,T]}\,|v_1(t)-v_2(t)|^2_{\bar{H}_\gamma}.
\end{equation}
\begin{Definition}
A predictable process $\bar{u}(t)$, taking values in $\bar{H}_\gamma$, is a mild solution to equation \eqref{eq-graph} if it satisfies the following integral equation
\begin{equation}
\label{mild}
\bar{u}(t)=\bar{S}(t)\varphi ^\wedge +\int_0^t \bar{S}(t-s) B(\bar{u}(s))\,ds+\int_0^t \bar{S}(t-s)\,G(\bar{u}(s))\,d\bar{\mathcal{W}}(s).\end{equation}
\end{Definition}
Since  the mapping $B$ can be extended to $\bar{H}_\gamma$, as a Lipschitz continuous mapping, due to \eqref{f89} we can conclude that there exists a unique mild  solution $\bar{u}$ to equation \eqref{eq-graph} in $L^p(\boldsymbol{\Omega},C([0,T];\bar{H}_\gamma))$, for any $T>0$ and $p\geq 1$.  Moreover
\begin{equation}
\label{f96}
\mathbf{E}\sup_{t \in\,[0,T]}|\bar{u}(t)|_{\bar{H}_\gamma}^p<+\infty.
\end{equation}

\begin{Theorem}
Assume Hypotheses \ref{H1}, \ref{H2} and \ref{H3}. Then, for any $\varphi \in\,H_\gamma$, $p\geq 1$ and $0<\tau<T$  we have
\begin{equation}
\label{f90}
\lim_{\e\to 0}\mathbf{E}\sup_{ t \in\,[\tau,T]}|u_\e(t)-\bar{u}(t)^\vee|^p_{H_\gamma}=\lim_{\e\to 0}\mathbf{E}\sup_{ t \in\,[\tau,T]}|u_\e (t)^\wedge-\bar{u}(t)|^p_{\bar{H}_\gamma}=0.
\end{equation}
\end{Theorem}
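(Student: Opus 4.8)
The plan is to estimate $z_\e(t):=u_\e(t)-\bar u(t)^\vee$ in $H_\gamma$ and then deduce the second limit in \eqref{f90} from the first via \eqref{fluid7}, using $(\bar u(t)^\vee)^\wedge=\bar u(t)$. The first step is to rewrite $\bar u(t)^\vee$ so that it mirrors the mild equation \eqref{mild} for $u_\e$. Since $(\cdot)^\vee$ is a linear isometry of $\bar H_\gamma$ onto its range it commutes with Bochner and It\^o integrals, and combining this with $(f^\vee)^\wedge=f$, with $\bar S(r)^\vee(f^\vee)=(\bar S(r)f)^\vee$, with $(B(\bar u(s)))^\vee=B(\bar u(s)^\vee)$, and with the fact that $\bar S(r)^\vee v$ depends on $v$ only through $v^\wedge$, one obtains
\[
\bar u(t)^\vee=\bar S(t)^\vee\varphi+\int_0^t\bar S(t-s)^\vee B(\bar u(s)^\vee)\,ds+\sum_{j=1}^\infty\int_0^t\bar S(t-s)^\vee\bigl[g(\bar u(s)^\vee)\,\widehat{u_j\mu}\bigr]\,d\beta_j(s).
\]
Subtracting this from \eqref{mild} gives $z_\e=I_\e+II_\e+III_\e$ (initial, drift, stochastic-convolution differences). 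Inserting $S_\e(t-s)$ applied to the limiting integrand one has $II_\e(t)=\int_0^tS_\e(t-s)[B(u_\e(s))-B(\bar u(s)^\vee)]\,ds+\int_0^t[S_\e(t-s)-\bar S(t-s)^\vee]B(\bar u(s)^\vee)\,ds$, and similarly for $III_\e$, the first ("Lipschitz") summand involving the bounded semigroup $S_\e$ and the Lipschitz $b,g$, the second ("semigroup-error") summand involving only the $\e$-independent $\bar u$. The term $I_\e(t)=S_\e(t)\varphi-\bar S(t)^\vee\varphi$ is deterministic and $\sup_{t\in[\tau,T]}|I_\e(t)|_{H_\gamma}\to0$ by \eqref{f70}.

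Next I would fix a small $\tau''\in(0,\tau)$ and an auxiliary $\delta>0$, prove the desired bound on $[\tau'',T]$ up to errors that vanish either as $\e\to0$ (for fixed $\tau'',\delta$) or as $\delta,\tau''\to0$, and finally let $\delta\to0$ and $\tau''\to0$ (legitimate since $[\tau,T]\subset[\tau'',T]$). Split every time integral in $II_\e,III_\e$ as $\int_0^{\tau''}+\int_{\tau''}^{t-\delta}+\int_{t-\delta}^t$. The $\int_0^{\tau''}$ contributions tend to $0$ in $L^p(\boldsymbol\Omega;C([\tau'',T];H_\gamma))$ as $\tau''\to0$, uniformly in $\e$: for $t\geq\tau''$ one pulls the relevant semigroup out of the short integral (using that $\bar S(\cdot)^\vee$, like $S_\e(\cdot)$, is a semigroup), bounds its operator norm by $c_T$, and estimates the remaining Bochner or stochastic integral over $[0,\tau'']$ by shortness together with \eqref{f65}, \eqref{f72}, the a priori bounds \eqref{f80}, \eqref{f96}, \eqref{f107} and Lipschitz continuity of $b,g$. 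The $\int_{t-\delta}^t$ contributions of the semigroup-error summands vanish as $\delta\to0$ (for the drift, $\leq 2c_T\delta\sup_s|B(\bar u(s)^\vee)|_{H_\gamma}$; for the stochastic one, the near-diagonal part of the factorization integral is a positive power of $\delta$ times $\sup_s|\bar Y(s)|$). The $\int_{\tau''}^t$ contributions of the Lipschitz summands are $\leq C\int_{\tau''}^t\mathbf E\sup_{[\tau'',s]}|z_\e|_{H_\gamma}^p\,ds$ by \eqref{f91}, \eqref{f89}, \eqref{f107}, destined for a Gronwall argument.

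The real work is the bulk piece $\int_{\tau''}^{t-\delta}$ of the semigroup-error summands, where $t-s\geq\delta$. For the drift, $|[S_\e(t-s)-\bar S(t-s)^\vee]B(\bar u(s)^\vee)|_{H_\gamma}\leq\sup_{r\in[\delta,T]}|[S_\e(r)-\bar S(r)^\vee]B(\bar u(s)^\vee)|_{H_\gamma}=:\rho_\e(s)$, which tends to $0$ for a.e.\ $(\omega,s)$ by \eqref{f70} applied to the fixed element $B(\bar u(s,\omega)^\vee)\in H_\gamma$ and is dominated by $2c_T|B(\bar u(s)^\vee)|_{H_\gamma}\in L^p(\boldsymbol\Omega\times[0,T])$; hence $\mathbf E\bigl(\int_{\tau''}^T\rho_\e(s)\,ds\bigr)^p\to0$ by dominated convergence. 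For the stochastic convolution I would apply the factorization method to both $\sum_j\int_0^\cdot S_\e(\cdot-s)[g(\bar u(s)^\vee)\widehat{u_j\mu}]\,d\beta_j(s)$ and its $\bar S(\cdot)^\vee$-analogue with one and the same exponent $\alpha\in(\tfrac1p,\tfrac12)$ (first proving the statement for large $p$ and then using H\"older if $p\leq2$); writing the difference via $\alpha$-regularized integrals $Y_\e,\bar Y$, the whole estimate reduces to $\mathbf E|Y_\e(s)-\bar Y(s)|_{H_\gamma}^p\to0$, that is, essentially to $\mathbf E\int_0^s(s-\sigma)^{-2\alpha}\sum_j|[S_\e(s-\sigma)-\bar S(s-\sigma)^\vee](g(\bar u(\sigma)^\vee)\widehat{u_j\mu})|_{H_\gamma}^2\,d\sigma\to0$, and this again follows from \eqref{f70} applied term by term plus dominated convergence, the domination being $\sum_j|g(v)\widehat{u_j\mu}|_{H_\gamma}^2\leq c(1+|v|_{H_\gamma}^2)$ (from \cite[Lemma 4.1]{pz}, equivalently $\sum_j|\widehat{u_j\mu}(x)|^2=\Lambda(0)$) together with \eqref{f96}. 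Note that \eqref{f70} is exactly the tool needed here, because $(g(\bar u(s)^\vee)\widehat{u_j\mu})^\wedge=g(\bar u(s))(\widehat{u_j\mu})^\wedge$, so that $\bar S(r)^\vee[g(\bar u(s)^\vee)\widehat{u_j\mu}]$ is precisely the pullback of the integrand of the graph equation \eqref{eq-graph}.

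Collecting these estimates, for $t\in[\tau'',T]$ one gets $\mathbf E\sup_{s\in[\tau'',t]}|z_\e(s)|_{H_\gamma}^p\leq\Xi_\e+C(\tau''+\delta^{\kappa})+C\int_{\tau''}^t\mathbf E\sup_{s'\in[\tau'',s]}|z_\e(s')|_{H_\gamma}^p\,ds$ for some $\kappa>0$, with $\Xi_\e=\Xi_\e(\tau'',\delta)\to0$ as $\e\to0$ for each fixed $\tau'',\delta$ and $C$ independent of $\tau''$. Gronwall's lemma gives $\mathbf E\sup_{[\tau'',T]}|z_\e|_{H_\gamma}^p\leq(\Xi_\e+C(\tau''+\delta^{\kappa}))e^{CT}$; letting $\e\to0$, then $\delta\to0$, then $\tau''\to0$, and using $[\tau,T]\subset[\tau'',T]$, yields $\lim_{\e\to0}\mathbf E\sup_{[\tau,T]}|z_\e|_{H_\gamma}^p=0$, the first equality in \eqref{f90}; the second follows from $|u_\e(t)^\wedge-\bar u(t)|_{\bar H_\gamma}=|(u_\e(t)-\bar u(t)^\vee)^\wedge|_{\bar H_\gamma}\leq|u_\e(t)-\bar u(t)^\vee|_{H_\gamma}$ by \eqref{fluid7}. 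I expect the main obstacle to be precisely the stochastic-convolution term: one has to control in $L^p(\boldsymbol\Omega;C([\tau,T];H_\gamma))$ the difference of two stochastic convolutions driven by different semigroups ($S_\e$ vs.\ $\bar S^\vee$) and by different, correlated noises ($\mathcal W$ vs.\ $\bar{\mathcal W}$), while the only convergence available, \eqref{f70}, is merely strong and uniform only away from the diagonal $t=s$; the way around it is the common-exponent factorization of both convolutions, the reduction to the term-by-term \eqref{f70}-convergence of the level-set-averaged kernel errors $[S_\e(r)-\bar S(r)^\vee](g(\bar u(\sigma)^\vee)\widehat{u_j\mu})$, and the three-scale time splitting, everything being glued together by the uniform-in-$\e$ a priori moment bounds \eqref{f80} and \eqref{f96}.
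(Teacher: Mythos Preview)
Your proposal is correct and follows essentially the same route as the paper: the same three-term decomposition $I_\e+II_\e+III_\e$, the same Lipschitz/semigroup-error splitting of $II_\e$ and $III_\e$, the same use of factorization for the stochastic convolution, the same appeal to \eqref{f70} plus dominated convergence for the semigroup-error pieces, and a Gronwall closure. The only cosmetic differences are that you carry two small parameters $\tau'',\delta$ where the paper uses a single $\tau$ for both the short-initial-time and near-diagonal cutoffs, and that your sentence ``the whole estimate reduces to $\mathbf E|Y_\e(s)-\bar Y(s)|_{H_\gamma}^p\to0$'' slightly understates the work: after factorizing both convolutions the difference has \emph{two} pieces, $\int(t-s)^{\alpha-1}S_\e(t-s)[Y_\e(s)-\bar Y(s)]\,ds$ and $\int(t-s)^{\alpha-1}[S_\e(t-s)-\bar S(t-s)^\vee]\bar Y(s)\,ds$, and the second (the paper's $H_{\e,2}$) needs its own near-diagonal/bulk split with \eqref{f70} applied to the random element $\bar Y(s)$---but you already have all the ingredients for this in your outline.
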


\begin{proof}
As $u_\e$ and $\bar{u}$ are mild solutions to equation \eqref{stoch-eq} and \eqref{eq-graph}, respectively, we have
\[\begin{array}{l}
\ds{u_\e(t)-\bar{u}(t)^\vee=\le[S_\e(t) \varphi-\bar{S}(t)^\vee \varphi\r]}\\
\vs
\ds{+\le[\int_0^t S_\e(t-s) B(u_\e(s))\,ds-\int_0^t \bar{S}(t-s) ^\vee B(\bar{u}(s)^\vee)\,ds\r]+\le[\Theta_\e(t)-\bar{\Theta}^\vee(t)\r]=:\sum_{i=1}^3 I_{\e,i}(t),}
\end{array}\]
where, for the sake of brevity, we have defined
\[\Theta_\e(t):=\int_0^t S_\e(t-s)\,G(u_\e(s))\,d\mathcal{W}(s),\ \ \ \bar{\Theta}(t):=\int_0^t \bar{S}(t-s)\,G(\bar{u}(s))\,d\bar{\mathcal{W}}(s).\]

In order to conclude the proof of \eqref{f90}, we need a couple of Lemmas, whose proof is postponed to the end of this section.

\begin{Lemma}
\label{l99}
For every $p\geq 1$, $T>0$ and $0<\tau\leq t\leq T$
\begin{equation}
\label{f93}
\mathbf{E}\sup_{ s \in\,[0,t]}
|I_{\e,2}(s)|_{H_\gamma}^p\leq c_p(T)\int_\tau^t\mathbf{E}\sup_{r \in\,[\tau,s]}|u_\e(r)-\bar{u}(r)^\vee|_{H_\gamma}^p\,ds+R_{T,p}(\tau,\e),
\end{equation}
for some constant $R_{T,p}(\tau,\e)$ such that
\begin{equation}
\label{f103}
\lim_{\e, \tau\to 0}R_{T,p}(\tau,\e)=0.\end{equation}
\end{Lemma}

\begin{Lemma}
\label{l100}
For every $p\geq 1$, $T>0$ and $0<\tau\leq t\leq T$
\begin{equation}
\label{f933}
\mathbf{E}\sup_{ s \in\,[0,t]}
|I_{\e,3}(s)|_{H_\gamma}^p\leq c_p(T)\int_\tau^t\mathbf{E}\sup_{r \in\,[\tau,s]}|u_\e(r)-\bar{u}(r)^\vee|_{H_\gamma}^p\,ds+S_{T,p}(\tau,\e),
\end{equation}
for some constant $S_{T,p}(\tau,\e)$ such that
\begin{equation}
\label{f1033}
\lim_{\e, \tau\to 0}S_{T,p}(\tau,\e)=0.\end{equation}

\end{Lemma}

Now, because of Lemmas \ref{l99} and \ref{l100}, we have
\[\begin{array}{l}
\ds{\mathbf{E}\sup_{s \in\,[\tau,t]}|u_\e(s)-\bar{u}^\vee(s)|_{H_\gamma}^p\leq c\le(\sup_{s \in\,[\tau,T]} |S_\e(s)\varphi-\bar{S}(s)^\vee \varphi|^p_{H_\gamma}+R_{T,p}(\tau,\e)+S_{T,p}(\tau,\e)\r)}\\
\vs
\ds{+   c_p(T) \int_\tau^t \mathbf{E}\sup_{r \in\,[\tau,s]}|u_\e(r)-\bar{u}^\vee(r)|_{H_\gamma}^p\,ds.}
\end{array}\]
Then, thanks the Gronwall lemma
\[\begin{array}{l}
\ds{\mathbf{E}\sup_{s \in\,[\tau,T]}|u_\e(s)-\bar{u}^\vee(s)|_{H_\gamma}^p}\\
\vs
\ds{\leq c_p\,e^{c_p(T)(T-\tau)}\le[\sup_{s \in\,[\tau,T]} |S_\e(s)\varphi-\bar{S}(s)^\vee \varphi|^p_{H_\gamma}+R_{T,p}(\tau,\e)+S_{T,p}(\tau,\e)\r].}
\end{array}\]
and we can conclude, thanks to \eqref{f70}, \eqref{f103} and \eqref{f1033}.

\end{proof}

\section{Proof of Lemmas \ref{l99} and \ref{l100}}

\begin{proof}[Proof of Lemma \ref{l99}.]
We have
\begin{equation}
\label{f92}
\begin{array}{l}
\ds{I_{\e,2}(t)=\int_0^t S_\e(t-s) \le[B(u_\e(s))-B(\bar{u}(s)^\vee)\r]\,ds}\\
\vs
\ds{+\int_0^t  \le[S_\e(t-s)-\bar{S}(t-s)^\vee\r] B(\bar{u} (s)^\vee)\,ds=:J_{\e,1}(t)+J_{\e,2}(t).}
\end{array}
\end{equation}
If we fix $0<\tau \leq t\leq T$, , we have
\[\begin{array}{l}
\ds{|J_{\e,1}(t)|_{H_\gamma}^p\leq c_p(T)\le(\int_0^\tau \le(|u_\e(s)|_{H_\gamma}^p+|\bar{u}(s)^\vee|_{H_\gamma}^p+1\r)\,ds+\int_\tau ^t |u_\e(s)-\bar{u}(s)^\vee|_{H_\gamma}^p\,ds\r)}\\
\vs
\ds{\leq c_p(T)\sup_{s \in\,[0,T]} \le(|u_\e(s)|_{H_\gamma}^p+|\bar{u}(s)^\vee|_{H_\gamma}^p+1\r)\tau+c_p(T)\int_\tau^t \sup_{r \in\,[\tau,s]}|u_\e(r)-\bar{u}(r)^\vee|_{H_\gamma}^p\,ds.}
\end{array}\]
Hence,  from  \eqref{f80} and \eqref{f96} we can conclude
\begin{equation}
\label{f100}
\mathbf{E}\sup_{s \in\,[0,t]}|J_{\e,1}(s)|_{H_\gamma}^p\leq c_p(T)\,\tau+c_p(T)\int_\tau^t \mathbf{E}\,\sup_{r \in\,[\tau,s]}|u_\e(r)-\bar{u}(r)^\vee|_{H_\gamma}^p\,ds.
\end{equation}
Concerning $J_{\e,2}(t)$, we have
\[\begin{array}{l}
\ds{|J_{\e,2}(t)|_{H_\gamma}^p}\\
\vs
\ds{\leq c_p(T)\int_0^{t-\tau}\le|\le[S_\e(t-s)-\bar{S}(t-s)^\vee\r] B(\bar{u} (s)^\vee)\r|_{H_\gamma}^p\,ds+c_p(T)\int_{t-\tau}^t \le(1+|\bar{u} (s)^\vee|^p_{H_\gamma}\r)\,ds}\\
\vs
\ds{\leq c_p(T)\int_0^T \sup_{r \in\,[\tau,T]} \le|\le[S_\e(r)-\bar{S}(r)^\vee\r] B(\bar{u} (s)^\vee)\r|_{H_\gamma}^p\,ds+c_p(T) \le(1+\sup_{s \in\,[0,T]}|\bar{u} (s)^\vee|^p_{H_\gamma}\r)\tau.}
\end{array}\]
Then, according to \eqref{f96}, we conclude
\[\mathbf{E}\sup_{t \in\,[0,T]} |J_{\e,2}(t)|_{H_\gamma}^p\leq c_p(T)\le(\tau+\mathbf{E}\int_0^T \sup_{r \in\,[\tau,T]} \le|\le[S_\e(r)-\bar{S}(r)^\vee\r] B(\bar{u} (s)^\vee)\r|_{H_\gamma}^p\,ds\r).\]
This, together with \eqref{f100}, implies \eqref{f93}, with
\[R_{T,p}(\tau,\e):=c_p(T)\tau+\mathbf{E}\int_0^T \sup_{r \in\,[\tau,T]} \le|\le[S_\e(r)-\bar{S}(r)^\vee\r] B(\bar{u} (s)^\vee)\r|_{H_\gamma}^p\,ds.\]
Moreover, \eqref{f103} follows as a consequence of Theorem \ref{fluid-main} and the dominated convergence theorem, due to the equi-boundedness of the norm of $S_\e(t)$ and $\bar{S}(t)$, and to \eqref{f96}.

\end{proof}

\bigskip

\begin{proof}[Proof of Lemma \ref{l100}]

First of all, we notice that, due to \eqref{fluid12}, 
\[\bar{\Theta}^\vee(t):=\int_0^t \bar{S}(t-s)^\vee\,G(\bar{u}(s)^\vee)\,d{\mathcal{W}}(s).\]
Then
\[\begin{array}{l}
\ds{I_{\e,3}(t)=\int_0^t S_\e(t-s)\le[G(u_\e(s))-G(\bar{u}(s)^\vee)\r]\,d\mathcal{W}(s)}\\
\vs
\ds{+\int_0^t \le[S_\e(t-s)-\bar{S}(t-s)^\vee\r]G(\bar{u}(s)^\vee)\,d\mathcal{W}(s)=:J_{\e,1}(t)+J_{\e,2}(t).}
\end{array}\]
By using a factorization argument, for every $t \in\,[0,T]$ and $\a \in\,(0,1/2)$ we have
\[\begin{array}{l}
\ds{J_{\e,1}(t)= \frac{\sin \pi \a}{\pi}\,\int_0^t(t-s)^{\a-1}S_\e(t-s) Y_\a(s)\,ds,}
\end{array}\]
where
\[Y_\a(s)=\int_0^s (s-\si)^{-\a}S_\e(s-\si)\le[G(u_\e(\si))-G(\bar{u}(\si)^\vee)\r]\,d\mathcal{W}(\si).\] 
Therefore, thanks to \eqref{f65}, for every $p\geq 1/\a$ 
\begin{equation}
\label{f105}
\begin{array}{l}
\ds{\mathbf{E}\sup_{s \in\,[0,t]}|J_{\e,1}(s)|^p_{H_\gamma}\leq c_{p,\a}(T)  \int_0^t\mathbf{E}\,|Y_\a(s)|_{H_\gamma}^p\,ds.  }
\end{array}\end{equation}
In view of  \eqref{f65}  and \eqref{f107}, we have
\[\begin{array}{l}
\ds{\mathbf{E}\,|Y_\a(s)|_{H_\gamma}^p=c_p\,\mathbf{E}\le(\int_0^s (s-\si)^{-2\a}\sum_{j=1}^\infty\le|S_\e(s-\si)\le[G(u_\e(\si))-G(\bar{u}(\si)^\vee)\r]\widehat{u_j\mu}\r|_{H_\gamma}^2\,d\si\r)^{\frac p2}}\\
\vs
\ds{\leq c_p(T)\mathbf{E}\le(\int_0^s (s-\si)^{-2\a}\sum_{j=1}^\infty\le|\le[G(u_\e(\si))-G(\bar{u}(\si)^\vee)\r]\widehat{u_j\mu}\r|_{H_\gamma}^2\,d\si\r)^{\frac p2}}\\
\vs
\ds{=c_p(T)\mathbf{E}\le(\int_0^s (s-\si)^{-2\a}\|G(u_\e(\si))-G(\bar{u}(\si)^\vee)\|_{\mathcal{L}_2(RK,H_\gamma}^2)\,d\si\r)^{\frac p2}}\\
\vs
\ds{\leq c_p(T)\mathbf{E}\le(\int_0^s (s-\si)^{-2\a} |u_\e(\si)-\bar{u}(\si)^\vee|_{H_\gamma}^2\,d\si\r)^{\frac p2}.}
\end{array}\]
Hence, thanks to the Young inequality, due to \eqref{f80} and \eqref{f96}, for any $0<\tau<t<T$
\[\begin{array}{l}
\ds{\int_0^t\mathbf{E}\,|Y_\a(s)|_{H_\gamma}^p\,ds\leq c_p(T)\le(\int_0^t s^{-2\a}\,ds\r)^{\frac p2}\int_0^t\mathbf{E}\,|u_\e(s)-\bar{u}(s)^\vee|_{H_\gamma}^p\,ds}\\
\vs
\ds{\leq c_p(T)\,\tau\,\le( \mathbf{E}\sup_{s \in\,[0,T]}|u_\e(s)|_{H_\gamma}^p+\mathbf{E}\sup_{s \in\,[0,T]}|\bar{u}(s)|_{H_\gamma}^p\r)+c_p(T)\int_\tau^t\mathbf{E}\,\sup_{r \in\,[\tau,s]} |u_\e(r)-\bar{u}(r)^\vee|_{H_\gamma}^p\,ds}\\
\vs
\ds{\leq c_p(T)\le(\tau+\int_\tau^t\mathbf{E}\,\sup_{r \in\,[\tau,s]} |u_\e(r)-\bar{u}(r)^\vee|_{H_\gamma}^p\,ds\r).}
\end{array}\]
According to \eqref{f105}, this yields
\begin{equation}
\label{f109}
\mathbf{E}\sup_{s \in\,[0,t]}|J_{\e,1}(s)|^p_{H_\gamma}\leq c_p(T)\le(\tau+\int_\tau^t\mathbf{E}\,\sup_{r \in\,[\tau,s]} |u_\e(r)-\bar{u}(r)^\vee|_{H_\gamma}^p\,ds\r).
\end{equation}

Next, by using again a factorization argument, for every $t \in\,[0,T]$ and $\a \in\,(0,1/2)$ we have
\[\begin{array}{l}
\ds{J_{\e,2}(t)= \frac{\sin \pi \a}{\pi}\,\int_0^t(t-s)^{\a-1}S_\e(t-s) Y_{\a,1}(s)\,ds}\\
\vs
\ds{+\frac{\sin \pi \a}{\pi}\,\int_0^t(t-s)^{\a-1}\le[S_\e(t-s)-\bar{S}(t-s)^\vee\r] Y_{\a,2}(s)\,ds,}
\end{array}\]
where
\[Y_{\a,1}(s)=\int_0^s (s-\si)^{-\a}\le[S_\e(s-\si)-\bar{S}(s-\si)^\vee\r]G(\bar{u}(\si)^\vee)\,d\mathcal{W}(\si),\]
and
\[ Y_{\a,2}(s)=\int_0^s (s-\si)^{-\a}\bar{S}(s-\si)^\vee\,G(\bar{u}(\si)^\vee)\,d\mathcal{W}(\si).\]
Thus, by proceeding as above for $J_{\e,1}(t)$, thanks to \eqref{f65} we get
\[\begin{array}{l}
\ds{\mathbf{E}\sup_{s \in\,[0,t]}|J_{\e,2}(s)|_{H_\gamma}^p\leq c_{p,\a}(T) \int_0^t \mathbf{E}|Y_{\a,1}(s)|_{H_\gamma}^p\,ds}\\
\vs
\ds{+c_{p,\a}(T)\,\mathbf{E}\sup_{s \in\,[0,t]}\int_0^s \le|\le[S_\e(s-r)-\bar{S}(s-r)^\vee\r] Y_{\a,2}(r)\r|_{H_\gamma}^p\,dr=:H_{\e,1}(t)+H_{\e,2}(t).}
\end{array}\]

According to \eqref{f65} and \eqref{f87}, we have
\[\begin{array}{l}
\ds{\mathbf{E}|Y_{\a,1}(s)|_{H_\gamma}^p\leq c_p\,\mathbf{E}\le(\,\sum_{j=1}^\infty\int_0^s (s-\si)^{-2\a} \le|\le[S_\e(s-\si)-\bar{S}(s-\si)^\vee\r]G(\bar{u}(\si)^\vee)\widehat{u_j \mu}\r|_{H_\gamma}^2\,d\si\r)^{\frac p2}}\\
\vs
\ds{\leq  c_p\,\mathbf{E}\le(\,\sum_{j=1}^\infty\int_0^s (s-\si)^{-2\a} \le|G(\bar{u}(\si)^\vee)\widehat{u_j \mu}\r|_{H_\gamma}^2\,d\si\r)^{\frac p2}.}
\end{array}\]
Since 
\[\le(\,\sum_{j=1}^\infty\int_0^s (s-\si)^{-2\a} \le|G(\bar{u}(\si)^\vee)\widehat{u_j \mu}\r|_{H_\gamma}^2\,d\si\r)^{\frac p2}\leq c_p(T)\le(1+\sup_{\si \in\,[0,s]}|\bar{u}(\si)^\vee|_{H_\gamma}^p\r),\]
due  to  \eqref{f96} and the dominated convergence theorem, we have
\[\lim_{n\to\infty}
\mathbf{E}\le(\,\sum_{j=n+1}^\infty\int_0^s (s-\si)^{-2\a} \le|G(\bar{u}(\si)^\vee)\widehat{u_j \mu}\r|_{H_\gamma}^2\,d\si\r)^{\frac p2}=0.\]
Therefore, for every $\eta>0$ we can fix $n_\eta \in\,\nat$ such that 
\[\begin{array}{l}
\ds{\mathbf{E}|Y_{\a,1}(s)|_{H_\gamma}^p\leq \eta}\\
\vs
\ds{+ c_p(T)\,\mathbf{E}\le(\,\sum_{j=n_\eta+1}^\infty\int_0^s (s-\si)^{-2\a} \le|\le[S_\e(s-\si)-\bar{S}(s-\si)^\vee\r]G(\bar{u}(\si)^\vee)\widehat{u_j \mu}\r|_{H_\gamma}^2\,d\si\r)^{\frac p2}.}
\end{array}\]
Once fixed $n_\eta$, due to \eqref{f70} and the dominated convergence theorem, we have 
\[\lim_{\e\to 0}\mathbf{E}\le(\,\sum_{j=n_\eta+1}^\infty\int_0^s (s-\si)^{-2\a} \le|\le[S_\e(s-\si)-\bar{S}(s-\si)^\vee\r]G(\bar{u}(\si)^\vee)\widehat{u_j \mu}\r|_{H_\gamma}^2\,d\si\r)^{\frac p2}=0,\]
and this, due to the arbitrariness of $\eta$, allows us to conclude that 
\begin{equation}
\label{f110}
\lim_{\e\to 0}\sup_{t \in\,[0,T]} H_{\e,1}(t)=0.
\end{equation}
As far $H_{\e,2}(t)$ is concerned, due to  \eqref{f65} and \eqref{f87}
for every $0<\tau\leq s\leq t$ we have
\[\begin{array}{l}
\ds{\int_0^s \le|\le[S_\e(s-r)-\bar{S}(s-r)^\vee\r] Y_{\a,2}(r)\r|_{H_\gamma}^p\,dr}\\
\vs
\ds{=\int_0^{s-\tau} \le|\le[S_\e(s-r)-\bar{S}(s-r)^\vee\r] Y_{\a,2}(r)\r|_{H_\gamma}^p\,dr+\int_{s-\tau}^s \le|\le[S_\e(s-r)-\bar{S}(s-r)^\vee\r] Y_{\a,2}(r)\r|_{H_\gamma}^p\,dr}\\
\vs
\ds{\leq \int_0^{T}\sup_{\rho \in\,[\tau,T]} \le|\le[S_\e(\rho)-\bar{S}(\rho)^\vee\r] Y_{\a,2}(r)\r|_{H_\gamma}^p\,dr+c_p(T)\sqrt{\tau} \le(\int_0^T \le|Y_{\a,2}(r)\r|_{H_\gamma}^{2p}\,dr\r)^{\frac 12}.}
\end{array}\]
Moreover, if $0\leq s\leq \tau$, we have
\[\int_0^s \le|\le[S_\e(s-r)-\bar{S}(s-r)^\vee\r] Y_{\a,2}(r)\r|_{H_\gamma}^p\,dr\leq c_p(T)\sqrt{\tau} \le(\int_0^T \le|Y_{\a,2}(r)\r|_{H_\gamma}^{2p}\,dr\r)^{\frac 12}.\]
Since, for every $q\geq 1$
\begin{equation}
\label{111}
\mathbf{E}\int_0^T|Y_{\a,2}(r)|^q_{H_\gamma}\,dr<\infty,\end{equation}
this implies
\[H_{\e,2}(t)\leq \mathbf{E} \int_0^{T}\sup_{\rho \in\,[\tau,T]} \le|\le[S_\e(\rho)-\bar{S}(\rho)^\vee\r] Y_{\a,2}(r)\r|_{H_\gamma}^p\,dr+c_p(T)\sqrt{\tau}.\]
Now, let us fix any $\eta>0$ and $\tau_\eta>0$ such that $c_p(T)\sqrt{\tau_\eta}<\eta$.
Because of \eqref{f70}, we have
\[\lim_{\e\to 0} \sup_{\rho \in\,[\tau_\eta,T]} \le|\le[S_\e(\rho)-\bar{S}(\rho)^\vee\r] Y_{\a,2}(r)\r|_{H_\gamma}^p=0.\]
Moreover, as 
\[\sup_{\rho \in\,[\tau_\eta,T]}\le|\le[S_\e(\rho)-\bar{S}(\rho)^\vee\r] Y_{\a,2}(r)\r|_{H_\gamma}^p\leq c_p(T) \,|Y_{\a,2}(r)|^p_{H_\gamma},\]
from \eqref{111} and the dominated convergence theorem, we get
\[\liminf_{\e\to 0}\sup_{ t \in\,[0,T]}H_{\e,2}(t)\leq \lim_{\e\to 0} \mathbf{E} \int_0^{T}\sup_{\rho \in\,[\tau,T]} \le|\le[S_\e(\rho)-\bar{S}(\rho)^\vee\r] Y_{\a,2}(r)\r|_{H_\gamma}^p\,dr+\eta=\eta.\]
Due to the arbitrariness of $\eta>0$, together with \eqref{f110}, this implies that 
\[\lim_{\e\to 0}\mathbf{E}\sup_{s \in\,[0,t]}|J_{\e,2}(s)|_{H_\gamma}^p=0,\]
and this, together with \eqref{f109}, implies \eqref{f933}.
\end{proof}

\end{document}